\documentclass[a4paper,oneside,10pt, reqno]{amsproc}
\usepackage{amsthm}
\usepackage{amsfonts}
\usepackage{amsmath}
\usepackage{amssymb}
\usepackage{mathrsfs}
\usepackage{epsfig}
\usepackage{graphicx}
\usepackage{epstopdf}
\usepackage{tikz-cd}
\usepackage{color}
\usepackage{ifthen}
\usepackage{float}

\makeatletter
\@namedef{subjclassname@2010}{%
\textup{2010} Mathematics Subject Classification}
\makeatother

\newtheorem{theorem}{Theorem}[section]
\newtheorem{proposition}[theorem]{Proposition}
\newtheorem{corollary}[theorem]{Corollary}

\theoremstyle{remark}
\newtheorem{remark}[theorem]{Remark}

\theoremstyle{definition}
\newtheorem{example}[theorem]{Example}

\def \dep{\mathsf{d}}

\newcommand{\ran}{\mbox{ran}}

\newcommand{\bq}{\begin{equation}}
\newcommand{\eq}{\end{equation}}
\newcommand{\beqn}{\begin{eqnarray*}}
\newcommand{\eeqn}{\end{eqnarray*}}
\newcommand{\beq}{\begin{eqnarray}}
\newcommand{\eeq}{\end{eqnarray}}

\newcommand{\rar}{\rightarrow}

\newcommand{\bc}{\begin{centre}}
\newcommand{\ec}{\end{centre}}

\newcommand{\ba}{\begin{array}}
\newcommand{\ea}{\end{array}}

\newcommand{\inp}[2]{\langle{#1},\,{#2} \rangle}

\renewcommand{\Delta}{{\nabla}}

\newcommand*{\child}[1]{\mathsf{Chi}(#1)}
\newcommand*{\childn}[2]{{\mathsf{Chi}}^{\langle#1\rangle}(#2)}

\newcommand*{\Ge}{\geqslant}
\newcommand*{\lambdab}{\boldsymbol\lambda}
\newcommand*{\Le}{\leqslant}
\newcommand*{\parent}[1]{\mathsf{par}(#1)}

\begin{document}
\title[Operator-valued Multishifts]
{Unitary equivalence of operator-valued multishifts}
\author[R. Gupta]{Rajeev Gupta}
\author[S. Kumar]{Surjit Kumar}
\author[S. Trivedi]{Shailesh Trivedi}
\address{Department of Mathematics and Statistics\\
Indian Institute of Technology  Kanpur, India}
   \email{rajeevg@iitk.ac.in}
\address{Department of Mathematics \\ Indian Institute of Science Bangalore, India}
\email{surjitkumar@iisc.ac.in}
\address{Department of Mathematics and Statistics\\
Indian Institute of Technology  Kanpur, India}
   \email{shailtr@iitk.ac.in}

\thanks{The work of all the authors was supported by Inspire Faculty Fellowship (Ref. No. DST/INSPIRE/04/2017/002367, DST/INSPIRE/04/2016/001008, DST/INSPIRE/04/2018/000338 respectively).}
   \subjclass[2010]{Primary 47A13, 47B37, Secondary 46E22, 46E40}
\keywords{operator-valued multishift, circularity, operator-valued reproducing kernel, bounded point evaluation, wandering subspace property}

\date{}

\begin{abstract}
We systematically study various aspects of operator-valued multishifts. Beginning with basic properties, we show that the class of multishifts on the directed Cartesian product of rooted directed trees is contained in that of operator-valued multishifts. Further, we establish circularity, analyticity and wandering subspace property of these multishifts. In the rest part of the paper, we study the function theoretic behaviour of operator-valued multishifts. We determine the bounded point evaluation, reproducing kernel structure and the unitary equivalence of operator-valued multishifts with invertible operator weights. In contrast with a result of Lubin, it appears that the set of all bounded point evaluations  of  an operator-valued multishift may be properly contained in the joint point spectrum of the adjoint of underlying multishift. 
\end{abstract}

\maketitle

\section{Introduction}

Shift operators constitute a pivotal part of operator theory. The class of shift operators is rich enough to understand various notions in operator theory. There is tremendous literature on these operators (\cite{Gel1}, \cite{Gel2}, \cite{Gel3}, \cite{H}, \cite{S}, \cite{Gr}, \cite{Her}, \cite{N}, \cite{B}, \cite{MMN}) and equally huge literature exists on their generalized notions (\cite{L}, \cite{LT}, \cite{JL}, \cite{J}, \cite{JJS}, \cite{G}, \cite{MS}, \cite{CT}, \cite{B-D-P}, \cite{CPT}, \cite{CPT-1}). Two significant generalizations of classical weighted shift appeared in \cite{JL} and \cite{JJS}. The notion of shift operator introduced in \cite{JL} is commonly known as {\it classical multishift} which extends the notion of classical weighted shift into multivariable settings. On the other hand, the notion of shift operator introduced in \cite{JJS} is called {\it weighted shift on directed tree} which incorporates discrete structures (directed graphs) to give a broader picture of classical weighted shifts. The recently introduced notion of {\it multishifts on directed Cartesian product of rooted directed trees} in \cite{CPT} unifies both the aforementioned notions in the context of unilateral shifts. Another generalization of classical unilateral weighted shift namely, {\it operator-valued unilateral weighted shift}, appeared in \cite{L}. Hence one may ask whether there is any notion of shift which unifies all of the aforementioned notions of shift. In this regard, there comes the notion of {\it operator-valued multishifts}.
It turns out that the class of multishifts on directed Cartesian product of rooted directed trees is contained in that of operator-valued multishifts (see Proposition \ref{directed-tree}). Further, the class of operator-valued multishifts with invertible operator weights turns out to be a rich source of counter-examples to the von Neumann's inequality (see \cite{GKT}). One can not expect such phenomenon in the class of classical multishifts with non-zero weights (see \cite{Hz}). 

In this paper, we systematically study the various aspects of operator-valued multishifts. 
Starting with prerequisites and the formal definition, we discuss boundedness, commutativity and the moments of operator-valued multishifts. Further, we show that the class of multishifts on directed Cartesian product of rooted directed trees is contained in that of operator-valued multishifts. This constitutes the second section and a partial converse of the aforementioned inclusion concludes this section. In the third section, we give examples of various classes of operator-valued multishifts. We study circularity, analyticity and wandering subspace property of operator-valued multishifts in the fourth section. The last section deals with the study of function theoretic behaviour of operator-valued multishifts. In particular, we study the bounded point evaluation, reproducing kernel structure and the unitary equivalence of these multishifts. It turns out that the set of bounded point evaluations of operator-valued multishifts with invertible operator weights may be properly contained in the joint point spectrum of the adjoint of these multishifts (see Remark \ref{proper-inclusion}). This is in contrast with \cite[Proposition 19]{JL}.

We set below the notations used in posterior sections.
For a set $X$ and positive integer $d$,  $X^d$ stands for the $d$-fold Cartesian product of $X$.
The symbol ${\mathbb N}, \mathbb R$ and $\mathbb C$ stand for the set of non-negative
integers, field of the real numbers and the field of complex numbers, respectively.
For $\alpha =
(\alpha_1, \ldots, \alpha_d) \in {\mathbb{N}}^d,$
we set $|\alpha|:=\sum_{j=1}^d
\alpha_j$ and $\alpha ! := \prod_{j=1}^d \alpha_j !$. We follow the convention that $\alpha \in \mathbb N^d$ is always understood as $\alpha =(\alpha_1, \ldots, \alpha_d)$.
For $w=(w_1, \ldots, w_d) \in \mathbb C^d$ and $\alpha \in \mathbb N^d$, the complex conjugate $\overline{w} \in \mathbb C^d$ of $w$ is given by $(\overline{w}_1, \ldots, \overline{w}_d),$ while $w^\alpha$ denotes the complex number $\prod_{j=1}^d w^{\alpha_j}_j$. 
The symbol $\mathbb T^d$ denotes the $d$-torus in $\mathbb{C}^d.$
A subset $\Omega$ of $\mathbb C^d$ is said to have {\it polycircular symmetry} if for every $\lambda=(\lambda_1,\ldots,\lambda_d)\in\mathbb T^d$ and $z=(z_1,\ldots,z_d)\in\Omega$, $\lambda\cdot z:=(\lambda_1 z_1,\ldots, \lambda_d z_d)\in\Omega.$ A connected subset of $\mathbb C^d$ is called {\it Reinhardt} if it has polycircular symmetry. 
Let $\mathcal H$ be a complex Hilbert space.
If $F$ is a subset of $\mathcal H$, the closure of $F$ is denoted by $\overline{F}$, while the closed linear span of $F$ is denoted by $\bigvee \{x : x \in F\}$.  
If $\mathcal M$ is a subspace of $\mathcal H$, then $\dim \mathcal M$ denotes the Hilbert space dimension of $\mathcal M.$ 
Let $\mathcal{B}({\mathcal H})$ denote the unital Banach algebra of
bounded linear operators on $\mathcal H$ whereas $\mathcal G(\mathcal H) \subseteq \mathcal{B}({\mathcal H})$ denotes the set of invertible operators on $\mathcal H$. The multiplicative identity $I$ of 
$\mathcal{B}(\mathcal H)$ is sometimes denoted by $I_{\mathcal H}$.
The norm on  $\mathcal H$ is denoted by $\|\cdot\|_{\mathcal H}$ 
and whenever there is no confusion likely, 
we remove the subscript $\mathcal H$ from $\|\cdot\|_{\mathcal H}$. 
If $T \in \mathcal B(\mathcal H)$, then $\ker(T)$ denotes the kernel of $T$, the range of $T$ is denoted by $T(\mathcal H)$ or $\ran\, T$, $T^*$ denotes the Hilbert space adjoint of $T$ and $r(T)$ denotes the spectral radius of $T$. 
An operator $T \in \mathcal B(\mathcal H)$ is {\it left-invertible} if $T^*T$ is invertible in $\mathcal B(\mathcal H)$. 
We say that $T \in \mathcal B(\mathcal H)$ is
{\it analytic} if $\bigcap_{n \in \mathbb N}T^n(\mathcal H)=\{0\}$.
By a  {\it commuting $d$-tuple $T=(T_1, \ldots, T_d)$ in $\mathcal B(\mathcal H)$}, 
we mean a collection of commuting operators $T_1, \ldots, T_d$ in $\mathcal B(\mathcal H).$
For $\alpha  \in \mathbb N^d$, we understand $T^{\alpha}$ as the operator $T^{\alpha_1}_1\cdots T^{\alpha_d}_d$, where we adhere to the convention that $A^0 = I_\mathcal H$ for $A \in \mathcal \mathcal B(\mathcal H)$.
A commuting $d$-tuple $T=(T_1, \ldots, T_d)$ is said to be analytic if 
$\bigcap_{\alpha \in \mathbb N^d}T^\alpha(\mathcal H)=\{0\}$.
The notations $\sigma(T),\sigma_e(T),\sigma_l(T)$ and $\sigma_p(T)$ are reserved for the Taylor spectrum, 
joint essential spectrum, joint left-spectrum and joint point spectrum of a commuting $d$-tuple $T$ respectively.
The Hilbert space adjoint of the commuting $d$-tuple $T=(T_1, \ldots, T_d)$ is the $d$-tuple $T^*=(T^*_1, \ldots, T^*_d),$ and
the joint kernel $\bigcap_{j=1}^d \ker T_j$ of $T$ is denoted by $\ker T.$ 
A commuting $d$-tuple $T=(T_1, \ldots, T_d)$ is said to be toral left invertible 
if $T^*_j T_j$ is invertible for each $j=1,\ldots,d.$
Further, for $\lambda=(\lambda_1, \ldots, \lambda_d) \in \mathbb C^d$, by $T-\lambda$, we understand the $d$-tuple $(T_1-\lambda_1I_{\mathcal H}, \ldots, T_d-\lambda_dI_{\mathcal H})$. A commuting $d$-tuple $T=(T_1, \ldots, T_d)$ is said to be a {\it toral contraction} (resp. a {\it joint contraction}) if $T^*_j T_j \Le I$ for every $j=1, \ldots, d$ (resp. $\sum_{j=1}^d T^*_j T_j \Le I$). We say that $T=(T_1, \ldots, T_d)$ is a {\it row contraction} if $\sum_{j=1}^d T_j T^*_j \Le I$ and 
 it is said to be {\it joint expansion} if $\sum_{j=1}^d T^*_jT_j  \Ge I$.

\section{Operator-valued Multishift: Definition and basic properties}

Let $\{H_\alpha : \alpha \in \mathbb N^d\}$ be a multisequence of complex separable Hilbert spaces and let ${\mathcal H}=\oplus_{\alpha \in \mathbb N^d} H_\alpha$ be the orthogonal direct sum of $H_\alpha$, $\alpha \in \mathbb N^d$. Then $\mathcal H$ is a Hilbert space with respect to the following inner product:
\beqn \inp{x}{y}_{\mathcal H}= \sum_{\alpha \in \mathbb N^d} \inp{x_{\alpha}}{y_{\alpha}}_{H_\alpha},
\quad x=\oplus_{\alpha \in \mathbb N^d}x_{\alpha},\
y=\oplus_{\alpha \in \mathbb N^d} y_{\alpha} \in {\mathcal H}.\eeqn
If $H_\alpha = H$ for all $\alpha \in \mathbb N^d$, then we denote ${\mathcal H}=\oplus_{\alpha \in \mathbb N^d} H$ by $\ell^2_{H}(\mathbb N^d)$.
Let $\{{A^{(j)}_{\alpha}} : \alpha \in  \mathbb N^d,\ j = 1, \ldots, d \}$ be a multisequence of bounded linear operators $A^{(j)}_\alpha : H_\alpha \rar H_{\alpha+\varepsilon_j}.$
An {\it operator-valued multishift} $T$ on $\mathcal H = \oplus_{\alpha \in \mathbb N^d} H_\alpha$ with operator weights $\{{A^{(j)}_{\alpha}} : \alpha \in  \mathbb N^d,\ j = 1, \ldots, d \}$ is a $d$-tuple of operators $T_1, \ldots, T_d$ in $\mathcal H$ defined by
\beqn \mathcal D(T_j) :=
\Big \{\oplus_{\alpha \in \mathbb N^d }x_{\alpha} \in {\mathcal H} :
\sum_{\alpha \in \mathbb N^d} \|A^{(j)}_{\alpha}x_{\alpha} \|^2 < \infty\Big\},\eeqn

\beqn T_j(\oplus_{\alpha \in \mathbb N^d}x_{\alpha}):= \oplus_{\alpha \in \mathbb N^d}
A^{(j)}_{\alpha-\varepsilon_j}x_{\alpha-\varepsilon_j}, \quad \oplus_{\alpha \in \mathbb N^d}x_{\alpha}
\in \mathcal D(T_j), \ j=1, \ldots, d,\eeqn where $\varepsilon_j$ is the $d$-tuple in $\mathbb N^d$ with $1$ in the $j^{\mbox{\tiny{th}}}$ place and zeros
elsewhere. If for $\alpha \in \mathbb N^d$, $\alpha_j = 0$, then we interpret $A^{(j)}_{\alpha-\varepsilon_j}$ as a zero operator, $x_{\alpha-\varepsilon_j}$ as a zero vector and $H_{\alpha-\varepsilon_j} := \{0\}$. In what follows, if we encounter a situation where $\alpha_j$, in $(\alpha_1, \ldots, \alpha_d)$, is negative for some $j \in \{1, \ldots, d\}$, then   
we set $H_\alpha = \{0\}$,  $A^{(j)}_{\alpha} : H_{\alpha} \rar H_{\alpha+\varepsilon_j}$ to be a zero operator and $x_{\alpha}$ as a zero vector.

Note that each $T_j$, $j=1, \ldots, d$, is a densely defined linear operator in $\mathcal H$.  It follows from \cite[Proposition 3.1]{GKT} that $T_j$ is bounded if and only if
\beq \label{tj-bdd}
\sup_{\alpha \in \mathbb N^d} \|A^{(j)}_\alpha\| < \infty.
\eeq
Further, $T_i$ commutes with $T_j$ if and only if
\beq \label{commuting}
 A^{(i)}_{\alpha+\varepsilon_j} A^{(j)}_{\alpha}= A^{(j)}_{\alpha+\varepsilon_i} A^{(i)}_{\alpha} \ \mbox{for all } \alpha \in \mathbb N^d.
\eeq
That is, the following diagram commutes:
\[ \begin{tikzcd}
H_\alpha \arrow{r}{A^{(j)}_\alpha} \arrow[swap]{d}{A^{(i)}_{\alpha}} & H_{\alpha+\varepsilon_j} \arrow{d}{A^{(i)}_{\alpha+\varepsilon_j}} \\%
H_{\alpha+\varepsilon_i} \arrow{r}{\! A^{(j)}_{\alpha+\varepsilon_i}}&\ \ H_{\alpha+\varepsilon_j+\varepsilon_i}
\end{tikzcd}
\]
We refer to $T$ as {\it commuting operator-valued multishift} if the operator weights satisfy \eqref{tj-bdd} and \eqref{commuting}.
The proof of the following proposition is a routine verification. We leave it for the interested readers.

\begin{proposition}\label{kerT*}
Let $T = (T_1, \ldots, T_d)$ be a commuting operator-valued multishift on $\mathcal H = \oplus_{\alpha \in \mathbb N^d} H_\alpha$ with operator weights $\{A^{(j)}_{\alpha} : \alpha \in \mathbb N^d,\  j=1, \ldots, d\}$. Then 
\beqn T_j^*(\oplus_{\alpha
\in \mathbb N^d}x_{\alpha})= \oplus_{\alpha \in \mathbb N^d}
A^{(j)*}_{\alpha}x_{\alpha + \varepsilon_j}, \quad \oplus_{\alpha
\in \mathbb N^d} x_{\alpha} \in {\mathcal H},\ j=1, \ldots, d. \eeqn
Further, the joint kernel $\ker T^*$ of $T^*$ is given by
\beq\label{kerT*-eq}
\ker T^* = H_0 \oplus \bigoplus_{\alpha \in \mathbb N^d\setminus \{0\}} \ker A^*_{\alpha}, 
\eeq 
where $\ker A^*_{\alpha}=\bigcap_{j=1}^d \ker
A^{(j)*}_{\alpha-\varepsilon_j}$,  $\alpha \in \mathbb N^d\setminus \{0\}.$
\end{proposition}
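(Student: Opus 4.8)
The plan is to obtain the formula for $T_j^*$ by a direct inner‑product computation, and then to read off $\ker T^*$ by collecting, for each fixed multi‑index, the vanishing conditions that the adjoint formula imposes.

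First, since $T$ is a commuting operator-valued multishift, \eqref{tj-bdd} guarantees that each $T_j$ is bounded and defined on all of $\mathcal H$. I would introduce the candidate operator $S_j$ on $\mathcal H$ by $S_j(\oplus_{\alpha \in \mathbb N^d} x_\alpha) := \oplus_{\alpha \in \mathbb N^d} A^{(j)*}_\alpha x_{\alpha + \varepsilon_j}$; a one-line estimate shows $S_j \in \mathcal B(\mathcal H)$ with $\|S_j\| = \sup_{\alpha} \|A^{(j)}_\alpha\|$. For $x = \oplus_\alpha x_\alpha$ and $y = \oplus_\alpha y_\alpha$ in $\mathcal H$, I would expand
\[
\inp{T_j x}{y}_{\mathcal H} = \sum_{\alpha \in \mathbb N^d} \inp{A^{(j)}_{\alpha-\varepsilon_j} x_{\alpha-\varepsilon_j}}{y_\alpha}_{H_\alpha},
\]
substitute $\beta = \alpha - \varepsilon_j$ (the terms with $\alpha_j = 0$ drop out by the stated convention that $A^{(j)}_{\alpha-\varepsilon_j}$ is then the zero operator and $x_{\alpha-\varepsilon_j}$ the zero vector), and move $A^{(j)}_\beta$ across the inner product to get $\sum_{\beta \in \mathbb N^d} \inp{x_\beta}{A^{(j)*}_\beta y_{\beta+\varepsilon_j}}_{H_\beta} = \inp{x}{S_j y}_{\mathcal H}$. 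Hence $T_j^* = S_j$, which is the first assertion. All the sums here are absolutely convergent because $y \in \mathcal H$ and the weights are uniformly bounded, so there is no interchange issue.

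Second, by the formula just proved, $y = \oplus_\alpha y_\alpha$ lies in $\ker T^* = \bigcap_{j=1}^d \ker T_j^*$ if and only if $A^{(j)*}_\alpha y_{\alpha+\varepsilon_j} = 0$ for every $\alpha \in \mathbb N^d$ and every $j \in \{1, \ldots, d\}$. I would then fix $\gamma \in \mathbb N^d$ and gather precisely the conditions in which the coordinate $y_\gamma$ occurs: these are indexed by $\alpha = \gamma - \varepsilon_j$, $j = 1, \ldots, d$, and read $A^{(j)*}_{\gamma - \varepsilon_j} y_\gamma = 0$. If $\gamma = 0$ there is no constraint (each $A^{(j)*}_{-\varepsilon_j}$ is the zero operator into $\{0\}$ by convention), so the $0$-th coordinate contributes all of $H_0$; if $\gamma \neq 0$, the constraints say exactly that $y_\gamma \in \bigcap_{j=1}^d \ker A^{(j)*}_{\gamma - \varepsilon_j} = \ker A^*_\gamma$, where for those $j$ with $\gamma_j = 0$ the corresponding kernel equals $H_\gamma$ and hence imposes nothing. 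Since these coordinatewise conditions are mutually independent, $\ker T^* = H_0 \oplus \bigoplus_{\alpha \in \mathbb N^d \setminus \{0\}} \ker A^*_\alpha$, which is a closed subspace, being an orthogonal direct sum of closed subspaces.

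The computation is elementary, and the only point requiring genuine care — presumably the reason the authors relegate it to the reader — is the bookkeeping with the conventions on $\alpha - \varepsilon_j$ when some component vanishes (or is negative): one must check that every boundary term either drops out of the reindexed sum or, in the kernel, imposes only the vacuous condition $y_\gamma \in H_\gamma$. Once this is handled consistently, both assertions follow.
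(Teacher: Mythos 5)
Your proof is correct, and it is exactly the routine verification the authors had in mind when they left this proposition to the reader (the paper contains no written proof to compare against): the adjoint formula by reindexing the inner-product sum, and the coordinatewise reading of the vanishing conditions, with the boundary convention $H_{\gamma-\varepsilon_j}=\{0\}$ turning the constraints at $\gamma_j=0$ into vacuous ones. No gaps.
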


The following proposition shows that there is no non-zero normal operator-valued multishift.

\begin{proposition}\label{Tj-normal}
Let $T = (T_1, \ldots, T_d)$ be a commuting operator-valued multishift on $\mathcal H = \oplus_{\alpha \in \mathbb N^d} H_\alpha$ with operator weights $\{A^{(j)}_{\alpha}: \alpha \in \mathbb N^d,\  j=1, \ldots, d\}$. Then for each $j = 1, \ldots, d$, $T_j$ is normal if and only if $T_j$ is zero.
\end{proposition}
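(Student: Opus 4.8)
The plan is to compute the diagonal blocks of $T_j^*T_j$ and $T_jT_j^*$ explicitly using Proposition \ref{kerT*}, and to compare them. Since $T_j$ maps $H_\alpha$ into $H_{\alpha+\varepsilon_j}$ via $A^{(j)}_\alpha$ and $T_j^*$ maps $H_{\alpha+\varepsilon_j}$ back to $H_\alpha$ via $A^{(j)*}_\alpha$, a direct computation gives, for $x = \oplus_{\alpha} x_\alpha \in \mathcal H$,
\beqn
T_j^*T_j\,x = \oplus_{\alpha \in \mathbb N^d} A^{(j)*}_\alpha A^{(j)}_\alpha x_\alpha, \qquad
T_jT_j^*\,x = \oplus_{\alpha \in \mathbb N^d} A^{(j)}_{\alpha-\varepsilon_j} A^{(j)*}_{\alpha-\varepsilon_j} x_\alpha,
\eeqn
with the usual convention that the terms indexed by $\alpha$ with $\alpha_j = 0$ in the second expression are zero. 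Thus normality of $T_j$ is equivalent to the system of operator equations
\beq \label{normality-system}
A^{(j)*}_\alpha A^{(j)}_\alpha = A^{(j)}_{\alpha-\varepsilon_j} A^{(j)*}_{\alpha-\varepsilon_j} \quad \text{on } H_\alpha, \ \text{for all } \alpha \in \mathbb N^d.
\eeq
The reverse implication in the proposition is trivial, so the content is to show that \eqref{normality-system} forces every $A^{(j)}_\alpha$ to vanish, which then forces $T_j = 0$.

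The key step is to exploit the boundary condition coming from $\alpha_j = 0$. Fix $j$ and fix an arbitrary multi-index $\beta \in \mathbb N^d$ with $\beta_j = 0$. Writing $\beta^{(n)} := \beta + n\varepsilon_j$ for $n \in \mathbb N$, equation \eqref{normality-system} at $\alpha = \beta^{(n)}$ reads $A^{(j)*}_{\beta^{(n)}} A^{(j)}_{\beta^{(n)}} = A^{(j)}_{\beta^{(n-1)}} A^{(j)*}_{\beta^{(n-1)}}$ for $n \ge 1$, while at $\alpha = \beta = \beta^{(0)}$ it reads $A^{(j)*}_{\beta} A^{(j)}_{\beta} = 0$ (the right-hand side being the zero operator on $H_\beta = \{0\}$'s image, i.e.\ the term with negative index is zero). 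Hence $A^{(j)}_\beta = 0$, and then $A^{(j)}_\beta A^{(j)*}_\beta = 0$, so the equation at $\alpha = \beta^{(1)}$ gives $A^{(j)*}_{\beta^{(1)}} A^{(j)}_{\beta^{(1)}} = 0$, i.e.\ $A^{(j)}_{\beta^{(1)}} = 0$. An induction on $n$ along this "line" in the $j$th direction yields $A^{(j)}_{\beta^{(n)}} = 0$ for all $n$. Since every $\alpha \in \mathbb N^d$ lies on exactly one such line (take $\beta = \alpha - \alpha_j\varepsilon_j$ and $n = \alpha_j$), we conclude $A^{(j)}_\alpha = 0$ for all $\alpha \in \mathbb N^d$, whence $T_j = 0$.

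I do not anticipate a serious obstacle here; the only point requiring a little care is the bookkeeping at the boundary $\alpha_j = 0$ and the convention (stated in the definition of operator-valued multishift) that operators and spaces indexed by multi-indices with a negative coordinate are zero, which is exactly what makes the base case $A^{(j)*}_\beta A^{(j)}_\beta = 0$ work. One should also note that $A^{(j)*}_\alpha A^{(j)}_\alpha = 0$ on a Hilbert space implies $A^{(j)}_\alpha = 0$ (since $\|A^{(j)}_\alpha x\|^2 = \langle A^{(j)*}_\alpha A^{(j)}_\alpha x, x\rangle$), which is the standard positivity argument and is the only "analytic" input needed. The telescoping/inductive propagation along each coordinate line is then purely formal.
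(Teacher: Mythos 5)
Your proposal is correct and follows essentially the same route as the paper: both derive the operator identities $A^{(j)*}_\alpha A^{(j)}_\alpha = A^{(j)}_{\alpha-\varepsilon_j} A^{(j)*}_{\alpha-\varepsilon_j}$ from normality, use the convention at $\alpha_j=0$ to get the base case $A^{(j)}_\alpha=0$, and then induct on $\alpha_j$ (your phrasing in terms of lines $\beta+n\varepsilon_j$ is just a repackaging of the paper's induction). No gaps.
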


\begin{proof}
Fix $j \in \{1, \ldots, d\}$. Clearly, if $T_j$ is zero, then it is normal. Suppose that $T_j$ is normal. Then for all $\oplus_{\alpha
\in \mathbb N^d} x_{\alpha} \in \mathcal H$, we get
\beqn
\oplus_{\alpha \in \mathbb N^d} A^{(j)*}_\alpha A^{(j)}_\alpha x_\alpha = \oplus_{\alpha \in \mathbb N^d} A^{(j)}_{\alpha-\varepsilon_j} A^{(j)*}_{\alpha-\varepsilon_j} x_\alpha,
\eeqn
which in turn implies that
\beq\label{normal-eq}
A^{(j)*}_\alpha A^{(j)}_\alpha = A^{(j)}_{\alpha-\varepsilon_j} A^{(j)*}_{\alpha-\varepsilon_j} \quad \mbox{for all } \alpha \in \mathbb N^d.
\eeq
It follows from \eqref{normal-eq} that for $\alpha = (\alpha_1, \ldots, \alpha_d) \in \mathbb N^d$, $A^{(j)}_\alpha = 0$ if $\alpha_j = 0$. To show that $A^{(j)}_\alpha = 0$ for all $\alpha \in \mathbb N^d$, we apply mathematical induction on $\alpha_j$. Clearly, the induction hypothesis holds for $\alpha_j = 0$. Suppose that for $k\in \mathbb{N},$ $A^{(j)}_\alpha = 0$ for all $\alpha \in \mathbb N^d$ with $\alpha_j =k$. Let $\beta \in \mathbb N^d$ be such that $\beta_j = k+1$. Then from \eqref{normal-eq}, we get
\beqn
A^{(j)*}_\beta A^{(j)}_\beta = A^{(j)}_{\beta-\varepsilon_j} A^{(j)*}_{\beta-\varepsilon_j}. 
\eeqn
Since $\beta_j - 1 = k$, it follows from the induction hypothesis that $A^{(j)}_{\beta-\varepsilon_j} = 0$ and hence $A^{(j)*}_\beta A^{(j)}_\beta = 0$ which in turn implies that $A^{(j)}_\beta = 0$. Thus $T_j$ must be zero.
\end{proof}

The following proposition gives explicit formula for the moments of a commuting operator-valued multishift. We will see later that these moments play a central role in the study of function theoretic behaviour of these multishifts. 

\begin{proposition}\label{moments}
Let $T = (T_1, \ldots, T_d)$ be a commuting operator-valued multishift on $\mathcal H = \oplus_{\alpha \in \mathbb N^d} H_\alpha$ with operator weights $\{A^{(j)}_{\alpha}: \alpha \in \mathbb N^d,\  j=1, \ldots, d\}$. For each $\alpha, \beta \in \mathbb N^d$, define
\beqn
B(\alpha, \beta) :=  A^{(1)}(\alpha,  \beta_1) A^{(2)}(\alpha-\beta_1\varepsilon_1,  \beta_2) \cdots A^{(d)}\Big(\alpha-\sum_{l=1}^{d-1}\beta_l\varepsilon_l,  \beta_d \Big),
\eeqn
 where  for each $j \in \{1, \ldots, d\}$, $A^{(j)}(\alpha, k) : H_{\alpha-k\varepsilon_j} \rar H_\alpha$ is given by
 \beqn
 A^{(j)}(\alpha, k) := \begin{cases}A^{(j)}_{\alpha-\varepsilon_j} \cdots A^{(j)}_{\alpha-k\varepsilon_j} & \mbox{if } k \Ge 1,\\
 I_{H_\alpha} & \mbox{if } k=0.
 \end{cases}
 \eeqn
Then for all $\alpha, \beta \in \mathbb N^d$, the following statements are true:
\begin{enumerate}
\item[(i)] $A^{(j)}_{\alpha} A^{(l)}(\alpha, k) = A^{(l)}(\alpha+\varepsilon_j, k) A^{(j)}_{\alpha-k\varepsilon_l}$\  \ for all\ \ $ j, l = 1, \ldots, d$ and $k \in \mathbb N$.
\item[(ii)] $A^{(j)}_{\alpha} B(\alpha, \beta) = B(\alpha+\varepsilon_j, \beta+\varepsilon_j)$ \  \ for all\ \   $j = 1, \ldots, d$.
\item[(iii)]
$
T^\beta (\oplus_{\alpha
\in \mathbb N^d} x_{\alpha}) = \oplus_{\alpha
\in \mathbb N^d} B(\alpha, \beta)\, x_{\alpha-\beta},
$ \ $\oplus_{\alpha
\in \mathbb N^d} x_{\alpha} \in \mathcal H$.
\item[(iv)] $
T^{*\beta} (\oplus_{\alpha
\in \mathbb N^d} x_{\alpha}) = \oplus_{\alpha
\in \mathbb N^d} C(\alpha, \beta)\, x_{\alpha+\beta},
$ \ $\oplus_{\alpha\in \mathbb N^d} x_{\alpha} \in \mathcal H$,
where
\beqn
\hspace{1.4cm} C(\alpha, \beta) = C^{(1)}(\alpha, \beta_1) C^{(2)}(\alpha+\beta_1\varepsilon_1, \beta_2) \cdots C^{(d)}\Big(\alpha+\sum_{l=1}^{d-1}\beta_l\varepsilon_l, \beta_d \Big)
\eeqn
and for $j \in \{1, \ldots, d\}$,
\beqn
C^{(j)}(\alpha, k) = \begin{cases}
A^{(j)*}_{\alpha} \cdots A^{(j)*}_{\alpha+(k-1)\varepsilon_j}  & \mbox{if } k \Ge 1,\\
I_{H_\alpha} & \mbox{if } k=0.
\end{cases}
\eeqn
\item[(v)] $
T^{*\beta} T^\beta (\oplus_{\alpha \in \mathbb N^d} x_{\alpha}) = \oplus_{\alpha \in \mathbb N^d} C(\alpha, \beta) B(\alpha+\beta, \beta)\, x_\alpha,$ \ $\oplus_{\alpha
\in \mathbb N^d} x_{\alpha} \in \mathcal H$.
\end{enumerate}
\end{proposition}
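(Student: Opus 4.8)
The plan is to prove the five assertions in the order stated, each one feeding the next. Throughout, recall that for a commuting operator-valued multishift every weight is bounded by \eqref{tj-bdd} (so each $T_j$ is everywhere defined on $\mathcal H$ and no domain issues arise) and the weights satisfy the commutation relations \eqref{commuting}; we also use freely the convention that out-of-range weights are zero operators, which makes all degenerate (negative-component) cases automatic. For (i), fix $j,l$. If $j=l$, then both sides collapse, directly from the definition of $A^{(j)}(\cdot,\cdot)$, to the single telescoping product $A^{(j)}_{\alpha}A^{(j)}_{\alpha-\varepsilon_j}\cdots A^{(j)}_{\alpha-k\varepsilon_j}$, so there is nothing to do. For $j\neq l$ we induct on $k$: the case $k=0$ is trivial since $A^{(l)}(\gamma,0)=I_{H_\gamma}$, and in the inductive step we peel off the leftmost factor, $A^{(l)}(\alpha,k)=A^{(l)}_{\alpha-\varepsilon_l}A^{(l)}(\alpha-\varepsilon_l,k-1)$, move $A^{(j)}_{\alpha}$ past $A^{(l)}_{\alpha-\varepsilon_l}$ via \eqref{commuting}, invoke the induction hypothesis at $\alpha-\varepsilon_l$, and reassemble the leading two weights into $A^{(l)}(\alpha+\varepsilon_j,k)$.

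Item (ii) follows from (i) by transporting $A^{(j)}_{\alpha}$ rightwards through the product $B(\alpha,\beta)=\prod_{l=1}^{d}A^{(l)}(\gamma_l,\beta_l)$, where $\gamma_1=\alpha$ and $\gamma_{l+1}=\gamma_l-\beta_l\varepsilon_l$: applying (i) to cross the factors $l=1,\dots,j-1$ advances the travelling weight's index from $\gamma_l$ to $\gamma_{l+1}$ while attaching a shift $\varepsilon_j$ to the crossed factor, and on reaching the $j$-th factor we absorb it using $A^{(j)}_{\gamma_j}A^{(j)}(\gamma_j,\beta_j)=A^{(j)}(\gamma_j+\varepsilon_j,\beta_j+1)$. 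Matching the outcome against the definition of $B(\alpha+\varepsilon_j,\beta+\varepsilon_j)$ — whose $l$-th base index is $\gamma_l+\varepsilon_j$ for $l\le j$ and $\gamma_l$ for $l>j$ — yields (ii). Then (iii) is proved by induction on $|\beta|$: it is clear for $\beta=0$ (where $B(\alpha,0)=I_{H_\alpha}$), and the passage from $\beta$ to $\beta+\varepsilon_j$ uses $T^{\beta+\varepsilon_j}=T_jT^{\beta}$, the definition of $T_j$, and (ii) read as $A^{(j)}_{\alpha-\varepsilon_j}B(\alpha-\varepsilon_j,\beta)=B(\alpha,\beta+\varepsilon_j)$; since every element of $\mathbb N^d$ is reached this way, (iii) follows.

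For (iv), first note the elementary identity $A^{(l)}(\gamma,k)^{*}=C^{(l)}(\gamma-k\varepsilon_l,k)$ (immediate from the definitions); taking adjoints in (i) and re-indexing then gives the dual commutation rule
\[ A^{(j)*}_{\gamma}\,C^{(l)}(\gamma+\varepsilon_j,k)=C^{(l)}(\gamma,k)\,A^{(j)*}_{\gamma+k\varepsilon_l},\qquad j,l=1,\dots,d,\ k\in\mathbb N. \]
Repeating the argument of (ii) verbatim — pushing $A^{(j)*}_{\alpha}$ rightwards through the product defining $C(\alpha+\varepsilon_j,\beta)$ and absorbing it into the $j$-th factor via $A^{(j)*}_{\mu}C^{(j)}(\mu+\varepsilon_j,\beta_j)=C^{(j)}(\mu,\beta_j+1)$ — produces $A^{(j)*}_{\alpha}C(\alpha+\varepsilon_j,\beta)=C(\alpha,\beta+\varepsilon_j)$. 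Then (iv) follows by induction on $|\beta|$, the base case $\beta=0$ being $C(\alpha,0)=I_{H_\alpha}$ and the inductive step using $T^{*(\beta+\varepsilon_j)}=T_j^{*}T^{*\beta}$ together with the formula for $T_j^{*}$ from Proposition \ref{kerT*}. Finally, (v) is immediate: substituting $T^{\beta}(\oplus_{\alpha\in\mathbb N^d}x_\alpha)=\oplus_{\alpha\in\mathbb N^d}B(\alpha,\beta)x_{\alpha-\beta}$ from (iii) into the formula of (iv) gives exactly $T^{*\beta}T^{\beta}(\oplus_{\alpha\in\mathbb N^d}x_\alpha)=\oplus_{\alpha\in\mathbb N^d}C(\alpha,\beta)B(\alpha+\beta,\beta)x_\alpha$.

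The only genuine difficulty is the multi-index bookkeeping in (ii) and (iv): one must track precisely how the shift $\varepsilon_j$ threads along the product and check that the base index of every factor of $B$ (resp. $C$) transforms in the prescribed way, which differs according to whether the position lies before, at, or after $j$, and one must also keep the exponent $\beta_j$ aligned with the increment coming from $\varepsilon_j$. Everything else is a mechanical induction, and the zero-operator convention disposes of all boundary cases uniformly.
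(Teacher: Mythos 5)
Your proposal is correct and follows essentially the same route as the paper: (i) by repeatedly pushing the weight through the product via \eqref{commuting} (your induction on $k$ just formalizes the paper's chain of equalities), (ii) by transporting $A^{(j)}_{\alpha}$ rightwards through $B(\alpha,\beta)$ and absorbing it at the $j$-th factor, (iii) and (iv) by induction on $|\beta|$, and (v) by composing (iii) and (iv). Your explicit dual commutation rule $A^{(j)*}_{\gamma}C^{(l)}(\gamma+\varepsilon_j,k)=C^{(l)}(\gamma,k)A^{(j)*}_{\gamma+k\varepsilon_l}$ merely spells out the adjoint step the paper leaves to the reader in (iv), and it checks out.
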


Note: For the sake of convenience, we write $B_\alpha$ in place of $B(\alpha, \alpha)$.

\begin{proof}
The equality in (i) follows from the repeated applications of commuting condition \eqref{commuting}. Indeed, if $k=0$, then the equality holds trivially for all $\alpha \in \mathbb N^d$. Let $\alpha \in \mathbb N^d$, $ j, l \in \{ 1, \ldots, d\}$ and $k \Ge 1$. Then
\beqn
A^{(j)}_\alpha A^{(l)}(\alpha, k) &=& A^{(j)}_\alpha  A^{(l)}_{\alpha-\varepsilon_l} \cdots A^{(l)}_{\alpha-k\varepsilon_l} \overset{\eqref{commuting}}= A^{(l)}_{\alpha+\epsilon_j-\epsilon_l}  A^{(j)}_{\alpha-\varepsilon_l} \cdots A^{(l)}_{\alpha-k\varepsilon_l} \\
&\overset{\eqref{commuting}}=&\cdots \overset{\eqref{commuting}} = A^{(l)}_{\alpha+\epsilon_j-\epsilon_l}  \cdots A^{(l)}_{\alpha+\varepsilon_j-k\varepsilon_l} A^{(j)}_{\alpha-k\varepsilon_l} = A^{(l)}(\alpha+\varepsilon_j, k) A^{(j)}_{\alpha-k\varepsilon_l}.
\eeqn

To see (ii), let $\alpha, \beta \in \mathbb N^d$ and $j \in \{1, \ldots, d\}$. Then
\beqn
A^{(j)}_\alpha B(\alpha, \beta) &=& A^{(j)}_\alpha A^{(1)}(\alpha, \beta_1) A^{(2)}(\alpha-\beta_1\varepsilon_1, \beta_2) \cdots A^{(d)}\Big(\alpha-\sum_{l=1}^{d-1}\beta_l\varepsilon_l, \beta_d \Big)\\
&\overset{\mbox{\tiny (i)}}=&  A^{(1)}(\alpha+\varepsilon_j, \beta_1) A^{(j)}_{\alpha-\beta_1\varepsilon_1}  \cdots A^{(d)}\Big(\alpha-\sum_{l=1}^{d-1}\beta_l\varepsilon_l, \beta_d \Big).
\eeqn
Continuing in this way, the right hand side of above expression becomes
\beqn
A^{(1)}(\alpha+\varepsilon_j, \beta_1) \cdots A^{(j)}\Big(\alpha-\sum_{l=1}^{j-1}\beta_l\varepsilon_l +\varepsilon_j,  \beta_j \Big) A^{(j)}_{\alpha-\sum_{l=1}^j \beta_l \varepsilon_l} \cdots A^{(d)}\Big(\alpha-\sum_{l=1}^{d-1}\beta_l\varepsilon_l, \beta_d \Big).
\eeqn
Note that
\beqn
A^{(j)}\Big(\alpha-\sum_{l=1}^{j-1}\beta_l\varepsilon_l +\varepsilon_j, \beta_j \Big) A^{(j)}_{\alpha-\sum_{l=1}^j \beta_l \varepsilon_l} = A^{(j)}\Big(\alpha-\sum_{l=1}^{j-1}\beta_l\varepsilon_l +\varepsilon_j, \beta_j+1 \Big).
\eeqn
Thus we get (ii).

We prove (iii) by induction on $|\beta|$, $\beta \in \mathbb N^d$. If $|\beta|=0$, then $\beta=0$ and hence the equality in (iii) holds trivially. Suppose that for some $n\in \mathbb N,$ it holds for all $\beta \in \mathbb N^d$ with $|\beta|=n$. Let $\gamma \in \mathbb N^d$ be such that $|\gamma|=n+1$. Then $\gamma=\beta+\varepsilon_j$ for some $j \in \{1, \ldots, d\}$ and $|\beta|=n$. Now for $\oplus_{\alpha \in \mathbb N^d} x_\alpha \in \mathcal H$, we get
\beqn
T^{\gamma}(\oplus_{\alpha \in \mathbb N^d} x_\alpha) &=& T_j T^{\beta}(\oplus_{\alpha \in \mathbb N^d} x_\alpha) =  T_j \big(\oplus_{\alpha \in \mathbb N^d} B(\alpha, \beta)\, x_{\alpha-\beta}\big) \\
&=& \oplus_{\alpha \in \mathbb N^d} A^{(j)}_{\alpha-\varepsilon_j} B(\alpha-\varepsilon_j, \beta)\, x_{\alpha-\varepsilon_j-\beta}\\
&\overset{\mbox{\tiny (ii)}}=& \oplus_{\alpha \in \mathbb N^d} B(\alpha, \gamma)\, x_{\alpha-\gamma}.
\eeqn
This completes the proof of (iii). The proof of (iv) goes along the lines of that of (iii) while (v) follows from (iii) and (iv).
\end{proof}

\begin{corollary}\label{dense-ker}
Let $T = (T_1, \ldots, T_d)$ be a commuting operator-valued multishift on $\mathcal H = \oplus_{\alpha \in \mathbb N^d} H_\alpha$ with operator weights $\{A^{(j)}_{\alpha}: \alpha \in \mathbb N^d,\  j=1, \ldots, d\}$. For $k \in \mathbb N$, let $D^{(k)} = (T_1^{*k}, \ldots, T_d^{*k})$. Then $\bigcup_{k \in \mathbb N} \ker D^{(k)}$ is a dense  subspace of $\mathcal H$.
\end{corollary}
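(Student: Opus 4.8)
The plan is to identify a conspicuous dense subspace of $\mathcal H$ that is automatically contained in $\bigcup_{k \in \mathbb N} \ker D^{(k)}$, and then let density propagate. The natural choice is the algebraic direct sum
$\mathcal H_{\mathrm{fin}} := \{\oplus_{\alpha \in \mathbb N^d} x_\alpha \in \mathcal H : x_\alpha = 0 \text{ for all but finitely many } \alpha \in \mathbb N^d\}$,
which is dense in the orthogonal direct sum $\mathcal H = \oplus_{\alpha \in \mathbb N^d} H_\alpha$ by the standard description of Hilbert space direct sums. So the whole proof reduces to the inclusion $\mathcal H_{\mathrm{fin}} \subseteq \bigcup_{k} \ker D^{(k)}$.

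First I would record that $\bigcup_k \ker D^{(k)}$ really is a subspace (a union of subspaces need not be). Since $T_j^{*(k+1)} = T_j^*\,T_j^{*k}$, we have $\ker T_j^{*k} \subseteq \ker T_j^{*(k+1)}$ for each $j$, hence $\ker D^{(k)} = \bigcap_{j=1}^d \ker T_j^{*k} \subseteq \ker D^{(k+1)}$; thus $\{\ker D^{(k)}\}_{k}$ is an increasing chain and its union is a subspace. For the inclusion itself, fix $x = \oplus_\alpha x_\alpha \in \mathcal H_{\mathrm{fin}}$ and choose $N \in \mathbb N$ with $x_\alpha = 0$ whenever $|\alpha| > N$. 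Applying Proposition \ref{moments}(iv) with $\beta = (N+1)\varepsilon_j$ (equivalently, iterating the formula $T_j^*(\oplus_\alpha x_\alpha) = \oplus_\alpha A^{(j)*}_\alpha x_{\alpha+\varepsilon_j}$ from Proposition \ref{kerT*} a total of $N+1$ times), the component of $T_j^{*(N+1)}x$ at index $\alpha$ is $C^{(j)}(\alpha, N+1)\, x_{\alpha + (N+1)\varepsilon_j}$. Since $|\alpha + (N+1)\varepsilon_j| = |\alpha| + N + 1 > N$, that vector $x_{\alpha + (N+1)\varepsilon_j}$ vanishes, so $T_j^{*(N+1)}x = 0$ for every $j = 1, \ldots, d$; that is, $x \in \ker D^{(N+1)} \subseteq \bigcup_k \ker D^{(k)}$.

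There is no genuine obstacle here: the argument is a one-line degree count once the adjoint moment formula of Proposition \ref{moments}(iv) and the density of finitely supported vectors are in hand. The only points warranting mild care are the verification that the union of the kernels is a subspace (which the nesting $\ker D^{(k)} \subseteq \ker D^{(k+1)}$ settles) and being explicit that ``finitely supported'' is meant in the sense of finitely many nonzero components, which is precisely what makes $\mathcal H_{\mathrm{fin}}$ dense in $\mathcal H$.
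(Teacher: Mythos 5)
Your proof is correct and follows essentially the same route as the paper's: both rest on the density of finitely supported vectors (the paper phrases this via the single-index subspaces $M_\alpha$) together with the adjoint moment formula of Proposition \ref{moments}(iv) applied with $\beta = k\varepsilon_j$ to see that a vector supported in degrees $\le N$ is killed by $D^{(N+1)}$. The degree count and the nesting argument for the union being a subspace match the paper's reasoning.
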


\begin{proof}
Note that $\{\ker D^{(k)}\}_{k \in \mathbb N}$ is an increasing sequence of subspaces of $\mathcal H$ and hence $\cup_{k \in \mathbb N} \ker D^{(k)}$ is a subspace of $\mathcal H$. For each $\alpha \in \mathbb N^d$, define
$$M_\alpha := \big\{\oplus_{\beta \in \mathbb N^d}x_\beta \in \mathcal H : x_\beta = 0 \mbox{ if } \beta \neq \alpha \big\}.$$
Fix $k \geqslant 1$ and $j \in \{1, \ldots, d\}$. Let $\alpha \in \mathbb N^d$ be such that $|\alpha| <  k$. Then by substituting $\beta = k \varepsilon_j$ in Proposition \ref{moments}(iv), we see that $M_\alpha \subseteq  \ker T_j^{*k}$.
Consequently, $M_\alpha \subseteq  \ker D^{(k)}$. Thus $\bigcup_{k \in \mathbb N} \ker D^{(k)}$ is a dense  subspace of $\mathcal H$.
 \end{proof}

 \begin{remark}\label{point-spectrum-adjoint}
 It follows from the above corollary that $M_0 \subseteq \ker T^*$. Thus $0 \in \sigma_p(T^*) \subseteq \sigma(T^*)$.
 \end{remark}

It is noted in \cite[Pg. 7]{JJS} that using \cite[Proposition 2.1.12(vi), (viii)]{JJS} a weighted shift on an arbitrary directed tree can be realized as an operator-valued weighted shift with underlying directed tree being $\mathbb N,\ \mathbb Z,\ \mathbb N_{-}$ (set of all negative integers) or $\{0,\ldots,n\}$ for some $n\in\mathbb N$. 
Following the idea of \cite[Proposition 2.1.12]{JJS} and the notion of depth introduced in \cite[Definition 2.1.11]{CPT}, in what follows, it turns out that the class of commuting multishifts on directed Cartesian product of rooted directed trees lies in that of commuting operator-valued multishifts. 
The reader is referred to \cite[Chapters 2, 3]{CPT} for the definition and basic properties of multishifts on directed Cartesian product of rooted directed trees (see also \cite{JJS} for general theory of weighted shifts on directed trees).

\begin{proposition}\label{directed-tree}
Let $\mathscr T = (V,\mathcal E)$ be the directed Cartesian product of rooted directed trees $\mathscr T_1, \ldots, \mathscr T_d$ and let $S_{\lambdab} = (S_1, \ldots, S_d)$ be a commuting multishift on $\mathscr T$ with weights $\big\{\lambda^{(j)}_v : v \in V\setminus \{\mathsf{root}\},\ j = 1, \ldots, d \big\}$. Then $S_{\lambdab}$ is unitarily equivalent to a commuting operator-valued multishift.
\end{proposition}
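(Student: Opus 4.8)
The plan is to realize the vertex set $V$ of the directed Cartesian product $\mathscr{T}$ as a ``fibered'' copy of $\mathbb{N}^d$, where the index $\alpha \in \mathbb{N}^d$ records the depths in each of the $d$ coordinate directions, and the fiber $H_\alpha$ over $\alpha$ is the $\ell^2$-space supported on the set $V_\alpha$ of all vertices of $\mathscr{T}$ whose coordinatewise depth vector equals $\alpha$. Concretely, writing $\mathscr{T}_j = (V_j, \mathcal{E}_j)$ with root $\mathsf{root}_j$ and depth function $\mathsf{d}_j$, and using $V = V_1 \times \cdots \times V_d$ with the product edge structure, I would set $V_\alpha := \{ v = (v_1,\dots,v_d) \in V : \mathsf{d}_j(v_j) = \alpha_j,\ j = 1,\dots,d\}$, define $H_\alpha := \ell^2(V_\alpha)$, and let $\mathcal{H} := \bigoplus_{\alpha \in \mathbb{N}^d} H_\alpha$. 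Since the $V_\alpha$ partition $V$, the obvious map $U : \ell^2(V) \to \mathcal{H}$ sending the canonical basis vector $e_v$ (for $v \in V_\alpha$) to $e_v$ viewed inside $H_\alpha$ is a unitary. The first step is therefore this bookkeeping: check that $\{V_\alpha\}_{\alpha \in \mathbb{N}^d}$ is a partition of $V$ and that $U$ is a well-defined unitary.

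The second step is to transport the multishift $S_{\lambdab}$ through $U$ and identify the operator weights. By the definition of a multishift on $\mathscr{T}$, the $j$-th coordinate shift $S_j$ acts on basis vectors by $S_j e_v = \sum_{w \in \childi{j}{v}} \lambda^{(j)}_w e_w$, where $\childi{j}{v}$ denotes the children of $v$ in the $j$-th edge direction; since a $j$-child of a vertex in $V_\alpha$ lies in $V_{\alpha + \varepsilon_j}$, the operator $U S_j U^* $ maps $H_\alpha$ into $H_{\alpha + \varepsilon_j}$. Define $A^{(j)}_\alpha : H_\alpha \to H_{\alpha+\varepsilon_j}$ to be exactly this block, i.e. $A^{(j)}_\alpha e_v = \sum_{w \in \childi{j}{v}} \lambda^{(j)}_w e_w$ for $v \in V_\alpha$. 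Boundedness of $A^{(j)}_\alpha$ uniformly in $\alpha$ follows from boundedness of $S_j$ together with \eqref{tj-bdd} read in reverse, and the commutativity relations \eqref{commuting} for the $A^{(j)}_\alpha$ are precisely a restatement of the commutativity of $S_i$ with $S_j$ on $\ell^2(V)$ (both express that following a $j$-edge then an $i$-edge produces the same weighted sum as following an $i$-edge then a $j$-edge, which holds because the underlying graph is a Cartesian product and $S_{\lambdab}$ is assumed commuting). One then checks on basis vectors that $U S_j U^*$ coincides with the operator-valued multishift $T_j$ built from the weights $\{A^{(j)}_\alpha\}$, and that $U$ carries the domain $\mathcal{D}(S_j)$ onto $\mathcal{D}(T_j)$; in fact for bounded shifts everything is defined on all of the Hilbert space, so this is automatic.

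The main obstacle — really the only nontrivial point — is the careful matching of indexing conventions: one must confirm that the depth vector is additive along $j$-edges (so that $A^{(j)}_\alpha$ genuinely lands in $H_{\alpha+\varepsilon_j}$ and not some other fiber), that the root of $\mathscr{T}$ corresponds to $\alpha = 0$ with $H_0 = \ell^2(\{\mathsf{root}\})$ one-dimensional, and that the ``negative index / zero space'' conventions in the definition of an operator-valued multishift are consistent with the fact that vertices at depth $0$ in direction $j$ have no $j$-parent. This is exactly the role played by the depth notion of \cite[Definition 2.1.11]{CPT} and the analogue of \cite[Proposition 2.1.12]{JJS}, and it is a routine but slightly tedious verification. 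Once the weights $A^{(j)}_\alpha$ are in hand and the commuting relations are checked, the conclusion is immediate: $S_{\lambdab} = (U^* T_1 U, \dots, U^* T_d U)$ exhibits $S_{\lambdab}$ as unitarily equivalent to the commuting operator-valued multishift $T = (T_1,\dots,T_d)$ with operator weights $\{A^{(j)}_\alpha : \alpha \in \mathbb{N}^d,\ j = 1,\dots,d\}$.
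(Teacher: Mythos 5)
Your proposal is correct and follows essentially the same route as the paper: partition $V$ into the depth fibers $V_\alpha$, set $H_\alpha=\ell^2(V_\alpha)$, take $U:\ell^2(V)\to\bigoplus_\alpha H_\alpha$ to be the obvious unitary, and define $A^{(j)}_\alpha$ as the restriction of $S_j$ to $H_\alpha$ (your basis-vector formula is exactly this), with boundedness and commutativity inherited from $S_{\lambdab}$. The paper's proof is the same construction, verified by the identical intertwining computation $T_jU=US_j$.
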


\begin{proof}
For each $\alpha \in \mathbb N^d$, let
\beqn
V_\alpha := \{v \in V : \dep_v = \alpha\} \ \mbox{and } H_\alpha := \ell^2(V_\alpha),
\eeqn
where $\dep_v$ is the depth of $v$ in $\mathscr T$ (see \cite[Definition 2.1.11]{CPT}).
Note that $V=\bigsqcup_{\alpha\in \mathbb N^d} V_\alpha$ (see \cite[Lemma 2.1.10(vi)]{CPT}).
Set $\mathcal H := \oplus_{\alpha \in \mathbb N^d} H_\alpha$. Then there is a natural isometry from $\ell^2(V)$ into $\mathcal H$. Indeed, let $x = \sum_{v \in V} \inp{x}{e_v} e_v \in \ell^2(V)$, then
\beqn
x = \sum_{v \in V} \inp{x}{e_v} e_v = \sum_{\alpha \in \mathbb N^d}\sum_{v \in V_\alpha} \inp{x}{e_v} e_v = \sum_{\alpha \in \mathbb N^d} x_\alpha,
\eeqn
where $x_\alpha =\displaystyle\sum_{v \in V_\alpha} \inp{x}{e_v} e_v\in H_\alpha$. Here for $W \subseteq V$, we identify $\ell^2(W)$ as a subspace of $\ell^2(V)$. Now define $U : \ell^2(V) \rar \mathcal H$ by
\beqn
Ux = \oplus_{\alpha \in \mathbb N^d} x_\alpha, \quad x \in \ell^2(V). 
\eeqn
It is easy to see that $U$ is unitary. Now for each $\alpha \in \mathbb N^d$ and $j \in \{1, \ldots, d\}$, consider the linear operator $A^{(j)}_\alpha : H_\alpha \rar H_{\alpha+\varepsilon_j}$ given by
\beqn
A^{(j)}_\alpha x = S_j x, \quad x \in H_\alpha.
\eeqn
Clearly, $A^{(j)}_\alpha$ is bounded for each $\alpha \in \mathbb N^d$ and $j \in \{1, \ldots, d\}$. Let $T = (T_1, \ldots, T_d)$ be the operator-valued multishift on $\mathcal H$ with operator weights $\{A^{(j)}_\alpha : \alpha \in \mathbb N^d,\ j = 1, \ldots, d\}$. Since $S_{\lambdab}$ is commuting, it follows that $T$ is also commuting. Further, for $x \in \ell^2(V)$ and $j \in \{1, \ldots, d\}$, we have
\beqn
T_j Ux &=& T_j(\oplus_{\alpha \in \mathbb N^d} x_\alpha) = \oplus_{\alpha \in \mathbb N^d} A^{(j)}_{\alpha - \varepsilon_j} x_{\alpha - \varepsilon_j} = \oplus_{\alpha \in \mathbb N^d} S_j x_{\alpha - \varepsilon_j}\\
&=& U\Big(\sum_{\alpha \in \mathbb N^d} S_j x_{\alpha - \varepsilon_j}\Big) = US_j\Big(\sum_{\alpha \in \mathbb N^d} x_{\alpha - \varepsilon_j}\Big) = US_j x.
\eeqn
This completes the proof.
\end{proof}

A natural question arises here whether a commuting operator-valued multishift can be looked upon as a multishift on a directed Cartesian product of rooted directed trees. We do not know the answer of this question in more than one variable, but in case $d=1$, there are some sufficient conditions which ensure an affirmative answer of this question. 

\begin{proposition}\label{equi-shift}
Let $T$ be an operator-valued unilateral weighted shift on $\mathcal H = \oplus_{n \in \mathbb N} H_n$ with operator weights $\{A_n: n \in \mathbb N,\  j=1, \ldots, d\}$. Suppose that $\dim H_0 \geqslant 1$ and for each $n \in \mathbb N$, there exists an orthonormal basis $B_n$ of $H_n$ with the following property: 
For each $x \in B_n$ there exists a subset $W_x$ of $B_{n+1}$ such that  
\begin{enumerate}
\item[(a)] $B_{n+1} = \bigsqcup_{x \in B_n} W_x,$
\item[(b)] $\inp{A_n x}{y} \neq 0 \mbox{ for all } y \in W_x \mbox{ and }A_n x = \sum_{y \in W_x} \inp{A_n x}{y}y.$
\end{enumerate}
Then $T$ is unitarily equivalent to direct sum of $\dim H_0$ number of weighted shifts on rooted directed trees.
\end{proposition}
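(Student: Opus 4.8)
\emph{Overall plan.} The hypotheses (a)--(b) are tailor-made to let one read off a forest of rooted directed trees from the bases $B_n$ and the weight operators $A_n$. The plan is: (1) build a directed graph $\mathscr T$ whose level-$n$ vertices are the vectors of $B_n$, with $x\to y$ an edge exactly when $y\in W_x$; (2) put the scalar weights $\lambda_v:=\inp{A_n u}{v}$ on the edge $u\to v$ (with $u\in B_n$, $v\in B_{n+1}$); (3) produce the obvious unitary $U\colon\mathcal H\to\ell^2(V(\mathscr T))$ sending the basis vector $v\in B_n\subseteq H_n\subseteq\mathcal H$ to $e_v$, and check $UTU^{*}=S_{\lambdab}$, the weighted shift on $\mathscr T$; then decompose $\mathscr T$ into its connected components. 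First I would set $V:=\bigsqcup_{n\in\mathbb N}B_n$ and declare $(x,y)$ an edge whenever $x\in B_n$, $y\in B_{n+1}$, $y\in W_x$. Condition (a) says $\{W_x:x\in B_n\}$ is a partition of $B_{n+1}$, so every vertex outside $B_0$ has a \emph{unique} parent, while vertices in $B_0$ have no parent; since edges strictly increase the level there are no circuits, and following parents down from any vertex reaches a unique element of $B_0$. Hence $\mathscr T$ is a disjoint union of $|B_0|=\dim H_0$ rooted directed trees $\mathscr T_x$ (the descendants of $x\in B_0$, with root $x$), each at most countable because each $H_n$ is separable.

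\emph{Weights and boundedness.} For a vertex $v\in B_{n+1}$ with parent $u\in B_n$ set $\lambdab_v:=\inp{A_n u}{v}$; by the first half of (b) this is nonzero, so $\{\lambda_v\}$ is an admissible weight system on $\mathscr T$. The second half of (b) gives $A_n u=\sum_{v\in W_x}\lambda_v v$ with $W_x\subseteq B_{n+1}$ orthonormal, so $\sum_{v\in\mathsf{Chi}(u)}|\lambda_v|^{2}=\|A_n u\|^{2}$ and therefore $\sup_{u\in V}\sum_{v\in\mathsf{Chi}(u)}|\lambda_v|^{2}=\sup_{n}\|A_n\|^{2}$; by \eqref{tj-bdd} the right-hand side is finite precisely when $T$ is bounded, so $S_{\lambdab}$ is bounded exactly when $T$ is, and in the unbounded case the natural domains correspond. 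Because $\mathscr T$ has no edges between distinct components, $\ell^2(V)=\bigoplus_{x\in B_0}\ell^2(V(\mathscr T_x))$ reduces $S_{\lambdab}$, and $S_{\lambdab}=\bigoplus_{x\in B_0}S_{\lambdab}^{(x)}$, where $S_{\lambdab}^{(x)}$ is the weighted shift on the rooted directed tree $\mathscr T_x$.

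\emph{The unitary.} The family $\{v:v\in B_n,\ n\in\mathbb N\}$, viewed inside $\mathcal H=\oplus_n H_n$, is an orthonormal basis of $\mathcal H$; sending $v\in B_n$ to the canonical basis vector $e_v$ extends to a unitary $U\colon\mathcal H\to\ell^2(V)$. For $v\in B_n$ one computes, using (b),
\[
UTv=U(A_n v)=U\Big(\sum_{y\in W_x}\lambda_y\,y\Big)=\sum_{y\in\mathsf{Chi}(v)}\lambda_y\,e_y=S_{\lambdab}e_v=S_{\lambdab}Uv,
\]
and since $\{e_v\}$ is an orthonormal basis (its span is a core in the unbounded case), $UTU^{*}=S_{\lambdab}=\bigoplus_{x\in B_0}S_{\lambdab}^{(x)}$, which is the assertion.

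\emph{Where the work is.} The argument is essentially bookkeeping; the single conceptual point is to recognize that hypothesis (b), together with the parenthood relation furnished by (a), says exactly that in the bases $B_n,B_{n+1}$ the operator $A_n$ is the ``tree-shift matrix'' of a weighted directed tree. Once that is seen, the only genuine checks are routine: that $\mathscr T_x$ is a rooted directed tree in the sense of \cite{JJS,CPT} (unique parents, no circuits, connectedness of components, countability), that the weights are nonzero, and that boundedness (and, if one wants the unbounded version, the domain) transfers via $U$. I do not expect a real obstacle here, only careful matching of the two frameworks.
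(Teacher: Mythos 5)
Your proposal is correct and follows essentially the same route as the paper: both build the vertex set $V=\bigsqcup_n B_n$ with $\child{x}=W_x$, assign the weight $\lambda_v=\inp{A_n\,\parent{v}}{v}$, intertwine $T$ with $S_\lambda$ via the obvious unitary identifying $v\in B_n$ with $e_v$, and then split $\ell^2(V)$ along the $\dim H_0$ connected components rooted at the elements of $B_0$. Your added remarks on boundedness (note only $\sup_u\sum_{v\in\child{u}}|\lambda_v|^2\leqslant\sup_n\|A_n\|^2$, not equality) and on unique parents are harmless elaborations of points the paper leaves implicit.
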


\begin{proof}
Set $V := \cup_{n \in \mathbb N} B_n$ and declare that for $x \in B_n$, $\child x = W_x$. This determines countably many disjoint rooted directed trees with roots being the elements of $B_0$. For $x \in B_n$, $n \geqslant 1$, set $\lambda_x := \inp{A_{n-1} \parent x}{x}_{\!_{H_n}}$ and consider the weight system $\lambda = \{\lambda_x : x \in V \setminus B_0\}$ of non-zero complex numbers. Let $\ell^2(V)$ be the Hilbert space of square summable complex-valued functions on $V$ with standard inner product. Note that the set $\{e_x : x \in V\}$ of characteristic functions of the singletons form an orthonormal basis of $\ell^2(V)$. Then the weighted shift operator $S_\lambda$ on $\ell^2(V)$ is a bounded linear operator. Let $U$ be the unitary operator from $\ell^2(V)$ onto $\mathcal H$ such that 
\beqn
U e_x = e_n \otimes x, \quad x \in V,
\eeqn
where $x \in B_n$, $n \in \mathbb N$, and $$e_n \otimes x := \oplus_{k=0}^\infty x_k =\begin{cases} x & \mbox{ if } k = n,\\ 0 & \mbox{ otherwise.} \end{cases}$$ Then for $x \in B_n$, $n \in \mathbb N$, we have
\beqn
U S_\lambda e_x &=& U \sum_{y \in \child x} \lambda_y e_y = \sum_{y \in W_x} \inp{A_n \parent y}{y} (e_{n+1} \otimes y)\\
&=& e_{n+1} \otimes \Big(\sum_{y \in W_x} \inp{A_n \parent y}{y} y\Big) = e_{n+1} \otimes A_n x = T U e_x.
\eeqn
This shows that $T$ is unitarily equivalent to $S_\lambda$. Further, it is easy to see that 
\beqn
V = \bigsqcup_{x \in B_0} V_x,
\eeqn
where for $x \in B_0$,
\beqn
V_x := \bigcup_{n \in \mathbb N} \childn{n}{x} = \{x\} \cup W_x \cup \Big( \bigcup_{y \in W_x} W_y\Big) \cup \cdots.
\eeqn
Thus we get $\ell^2(V) = \oplus_{x \in B_0} \ell^2(V_x)$ and each $\ell^2(V_x)$ is a reducing subspace of $S_\lambda$. Also note that for each $x \in B_0$, $V_x$ is a rooted directed tree with root $x$. Hence $S_\lambda = \oplus_{x \in B_0} S_x$, where $S_x := S_\lambda|_{\ell^2(V_x)}$ is a weighted shift on the rooted directed tree $V_x$. This completes the proof of the proposition.
\end{proof}

As an immediate consequence of the preceding proposition, we get the following corollary.

\begin{corollary}
Under the assumptions of Proposition \ref{equi-shift}, if $\dim H_0 = 1$, then $T$ is unitarily equivalent to a weighted shift on a rooted directed tree.
\end{corollary}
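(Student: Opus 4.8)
The plan is to read the conclusion off directly from Proposition \ref{equi-shift} and its proof, specialized to the case $\dim H_0 = 1$. First I would observe that the standing hypotheses here are exactly those of Proposition \ref{equi-shift}: in particular $\dim H_0 \geqslant 1$ holds trivially when $\dim H_0 = 1$, and the orthonormal bases $B_n$ together with the subsets $W_x \subseteq B_{n+1}$ satisfying (a)--(b) are assumed. Hence Proposition \ref{equi-shift} applies and produces a unitary $U \colon \ell^2(V) \to \mathcal H$ with $TU = U S_\lambda$, where $V = \bigcup_{n \in \mathbb N} B_n$, the weight system is $\lambda = \{\lambda_x : x \in V\setminus B_0\}$ with $\lambda_x = \inp{A_{n-1}\parent x}{x}$, and moreover $S_\lambda = \bigoplus_{x \in B_0} S_x$, each $S_x = S_\lambda|_{\ell^2(V_x)}$ being a weighted shift on the rooted directed tree $V_x = \bigcup_{n\in\mathbb N}\childn{n}{x}$.

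Next I would invoke the extra hypothesis $\dim H_0 = 1$. Since $B_0$ is an orthonormal basis of $H_0$, it must be a singleton, say $B_0 = \{x_0\}$. Consequently the disjoint decomposition $V = \bigsqcup_{x \in B_0} V_x$ collapses to $V = V_{x_0}$, so $\ell^2(V) = \ell^2(V_{x_0})$ and $S_\lambda = S_{x_0}$. As recorded in the proof of Proposition \ref{equi-shift}, $V_{x_0}$ is a rooted directed tree with root $x_0$, and $S_{x_0}$ is by construction a weighted shift on it. Combining this with $TU = U S_\lambda = U S_{x_0}$ shows that $T$ is unitarily equivalent to the weighted shift $S_{x_0}$ on the rooted directed tree $V_{x_0}$, which is the assertion.

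There is essentially no obstacle here, as the statement is a direct specialization; the only points requiring a word of care are (i) noting that $\dim H_0 = 1$ forces $|B_0| = 1$, which is what makes the forest $V = \bigsqcup_{x \in B_0} V_x$ connected, hence a single rooted tree rather than a genuine disjoint union, and (ii) citing the portion of the proof of Proposition \ref{equi-shift} that identifies each $V_x$ as a \emph{rooted} directed tree. Everything else is a transparent consequence of the preceding proposition.
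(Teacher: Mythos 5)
Your argument is correct and is exactly the specialization the paper intends: it states the corollary as an immediate consequence of Proposition \ref{equi-shift} without further proof, and your observation that $\dim H_0 = 1$ forces $B_0$ to be a singleton, collapsing the direct sum $\bigoplus_{x \in B_0} S_x$ to a single weighted shift on the rooted directed tree $V_{x_0}$, is precisely the intended reasoning.
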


\section{Examples}

 We have already seen in Proposition \ref{directed-tree} that a large class of examples of operator-valued multishifts is supplied by the multishifts on directed Cartesian product of rooted directed trees. In this section, we exhibit few more classes of examples of commuting operator-valued multishifts. We begin with the following well-known example.

\begin{example}
Let $\{w^{(j)}_\alpha : \alpha \in \mathbb N^d,\ j=1, \ldots,d\}$ be a multisequence of non-zero complex numbers such that $\sup_{\alpha \in \mathbb N^d} |w^{(j)}_\alpha| < \infty$ and $w^{(j)}_{\alpha+\varepsilon_i} w^{(i)}_\alpha = w^{(i)}_{\alpha+\varepsilon_j} w^{(j)}_\alpha$ for all $\alpha \in \mathbb N^d$, $i,j=1,\ldots,d$. Let $\mathcal H = \ell^2_{\mathbb C}(\mathbb N^d)$. Set $A^{(j)}_\alpha := w^{(j)}_\alpha I_{\mathbb C}$ for all $\alpha \in \mathbb N^d$ and $j=1,\ldots,d$. Then the commuting operator-valued multishift $T=(T_1, \ldots, T_d)$ with operator weights $\{A^{(j)}_{\alpha}: \alpha \in \mathbb N^d,\  j=1, \ldots, d\}$ is commonly known as {\it classical multishift} \cite{JL}.
\end{example}

The following proposition gives a recipe to construct a number of examples of various classes of commuting operator-valued multishifts.

\begin{proposition}\label{example-prop}
Let $\{w^{(j)}_\alpha : \alpha \in \mathbb N^d,\ j=1, \ldots,d\}$ be a multisequence of non-zero complex numbers such that $\sup_{\alpha \in \mathbb N^d} |w^{(j)}_\alpha| < \infty$ and $w^{(j)}_{\alpha+\varepsilon_i} w^{(i)}_\alpha = w^{(i)}_{\alpha+\varepsilon_j} w^{(j)}_\alpha$ for all $\alpha \in \mathbb N^d$, $i,j=1,\ldots,d$. Let $H$ be a complex separable Hilbert space and let $\Phi : \mathbb N \rar \mathcal B(H)$ be a bounded function. For all $\alpha \in \mathbb N^d$ and $j \in \{1, \ldots, d\}$, set
\beqn A^{(j)}_{\alpha} := w^{(j)}_\alpha \Phi(|\alpha|).
\eeqn
Consider the operator-valued multishift $T = (T_1, \ldots, T_d)$ on $\mathcal H = \ell^2_{H}(\mathbb N^d)$ with operator weights $\{A^{(j)}_{\alpha}: \alpha \in \mathbb N^d,\  j=1, \ldots, d\}$. For all $\alpha \in \mathbb N^d$, set
\beqn
w_\alpha := \big(w^{(1)}_\alpha, \ldots, w^{(d)}_\alpha\big), \quad \tilde{w}_\alpha := \big(w^{(1)}_{\alpha-\varepsilon_1}, \ldots, w^{(d)}_{\alpha-\varepsilon_d}\big),
\eeqn
where we follow the convention that for $j \in \{1, \ldots, d\}$, $w^{(j)}_\alpha = 0$ whenever $\alpha_k < 0$ for some $k \in \{1, \ldots, d\}$. Then the following statements hold:
\begin{enumerate}
\item[(i)] $T$ is a commuting operator-valued multishift.
\item[(ii)] $T$ is a joint  contraction if and only if for all $\alpha \in \mathbb N^d$, the Hilbert space operator $w_\alpha \otimes \Phi(|\alpha|) : \mathbb C^d \otimes H \rar H$ is a contraction.
\item[(iii)] $T$ is a row contraction if and only if for all $\alpha \in \mathbb N^d$ with $|\alpha| \geqslant 1$, the Hilbert space operator $\tilde{w}_\alpha \otimes \Phi(|\alpha|-1) : \mathbb C^d \otimes H \rar H$ is a contraction.
\item[(iv)] $T$ is a joint expansion if and only if for all $\alpha \in \mathbb N^d$, the Hilbert space operator $w_\alpha \otimes \Phi(|\alpha|) : \mathbb C^d \otimes H \rar H$ is an expansion.
\end{enumerate}
\end{proposition}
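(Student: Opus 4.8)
The plan is to reduce everything to the single-variable formula for $T_j^*T_j$ on each block, and then recognize the resulting operator inequality as a statement about the rank-one-modulated operator $w_\alpha\otimes\Phi(|\alpha|)$. First I would establish (i): the commutativity condition \eqref{commuting} for the weights $A^{(j)}_\alpha=w^{(j)}_\alpha\Phi(|\alpha|)$ reads $w^{(j)}_{\alpha+\varepsilon_i}w^{(i)}_\alpha\,\Phi(|\alpha|+1)\Phi(|\alpha|)=w^{(i)}_{\alpha+\varepsilon_j}w^{(j)}_\alpha\,\Phi(|\alpha|+1)\Phi(|\alpha|)$, which holds by the scalar relation $w^{(j)}_{\alpha+\varepsilon_i}w^{(i)}_\alpha=w^{(i)}_{\alpha+\varepsilon_j}w^{(j)}_\alpha$; boundedness is immediate from $\sup_\alpha|w^{(j)}_\alpha|<\infty$ and $\sup_n\|\Phi(n)\|<\infty$ via \eqref{tj-bdd}. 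So $T$ is a commuting operator-valued multishift.

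For (ii) and (iv), I would compute, using Proposition \ref{kerT*}, that $\sum_{j=1}^d T_j^*T_j$ acts on the block $H_\alpha$ as $\sum_{j=1}^d A^{(j)*}_\alpha A^{(j)}_\alpha=\sum_{j=1}^d |w^{(j)}_\alpha|^2\,\Phi(|\alpha|)^*\Phi(|\alpha|)=\big(\sum_{j=1}^d|w^{(j)}_\alpha|^2\big)\Phi(|\alpha|)^*\Phi(|\alpha|)=\|w_\alpha\|^2\,\Phi(|\alpha|)^*\Phi(|\alpha|)$. Since $T$ is a direct sum over $\alpha$ of its action on blocks, $\sum_j T_j^*T_j\Le I_{\mathcal H}$ (resp. $\Ge I_{\mathcal H}$) if and only if $\|w_\alpha\|^2\,\Phi(|\alpha|)^*\Phi(|\alpha|)\Le I_H$ (resp. $\Ge I_H$) for every $\alpha\in\mathbb N^d$. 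The remaining point is to identify this block inequality with the assertion that $w_\alpha\otimes\Phi(|\alpha|):\mathbb C^d\otimes H\to H$ is a contraction (resp. an expansion): writing $T\otimes\Phi$ for the map $\xi\otimes h\mapsto \langle\xi,\overline{w_\alpha}\rangle\,\Phi(|\alpha|)h$ one has $(w_\alpha\otimes\Phi(|\alpha|))^*(w_\alpha\otimes\Phi(|\alpha|))=\|w_\alpha\|^2\,\Phi(|\alpha|)^*\Phi(|\alpha|)$ acting on (the relevant copy of) $H$, which is precisely the block operator above; here one should be a little careful about the convention for how the vector $w_\alpha\in\mathbb C^d$ induces an operator $\mathbb C^d\to\mathbb C$, but either convention gives the same modulus $\|w_\alpha\|^2$, so the equivalence goes through. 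Contractivity of an operator $R$ means $R^*R\Le I$ and expansivity means $R^*R\Ge I$, so this gives (ii) and (iv).

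For (iii), the same block computation applied to $\sum_{j=1}^d T_jT_j^*$ gives, again by Proposition \ref{kerT*}, that on $H_\alpha$ this operator equals $\sum_{j=1}^d A^{(j)}_{\alpha-\varepsilon_j}A^{(j)*}_{\alpha-\varepsilon_j}=\sum_{j=1}^d|w^{(j)}_{\alpha-\varepsilon_j}|^2\,\Phi(|\alpha|-1)\Phi(|\alpha|-1)^*=\|\tilde w_\alpha\|^2\,\Phi(|\alpha|-1)\Phi(|\alpha|-1)^*$ when $|\alpha|\Ge1$, and is $0$ on $H_0$ (where all the $A^{(j)}_{-\varepsilon_j}$ vanish), with the convention that $w^{(j)}_{\alpha-\varepsilon_j}=0$ when $\alpha_j=0$ so that $|\alpha|-1$ is still a legitimate index since $|\alpha|\Ge1$. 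Hence $T$ is a row contraction iff $\|\tilde w_\alpha\|^2\,\Phi(|\alpha|-1)\Phi(|\alpha|-1)^*\Le I_H$ for all $\alpha$ with $|\alpha|\Ge1$; and since $\|RR^*\|=\|R^*R\|$ and, more precisely, $RR^*\Le I\iff R^*R\Le I$, this is equivalent to $(\tilde w_\alpha\otimes\Phi(|\alpha|-1))^*(\tilde w_\alpha\otimes\Phi(|\alpha|-1))=\|\tilde w_\alpha\|^2\Phi(|\alpha|-1)^*\Phi(|\alpha|-1)\Le I$, i.e. to $\tilde w_\alpha\otimes\Phi(|\alpha|-1):\mathbb C^d\otimes H\to H$ being a contraction. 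This yields (iii).

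The only genuinely delicate point — really a bookkeeping matter rather than a mathematical obstacle — is matching the authors' convention for the operator $w_\alpha\otimes\Phi(|\alpha|):\mathbb C^d\otimes H\to H$ (how a vector in $\mathbb C^d$ is read as a functional, and which of $\Phi\Phi^*$ vs $\Phi^*\Phi$ shows up) with the block operators $\|w_\alpha\|^2\Phi^*\Phi$ and $\|\tilde w_\alpha\|^2\Phi\Phi^*$ coming out of the moment computation; once that is pinned down, everything reduces to the elementary facts that a direct sum of block operators satisfies an operator inequality iff each block does, and that $R^*R\Le I\iff RR^*\Le I$.
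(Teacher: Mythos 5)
Your proposal is correct and follows essentially the same route as the paper: verify commutativity blockwise from the scalar relation, compute $\sum_j A^{(j)*}_\alpha A^{(j)}_\alpha=\|w_\alpha\|^2\Phi(|\alpha|)^*\Phi(|\alpha|)$ (resp.\ $\sum_j A^{(j)}_{\alpha-\varepsilon_j}A^{(j)*}_{\alpha-\varepsilon_j}=\|\tilde w_\alpha\|^2\Phi(|\alpha|-1)\Phi(|\alpha|-1)^*$), and identify the resulting block inequality with contractivity/expansivity of $w_\alpha\otimes\Phi(|\alpha|)$ via $\|w_\alpha\otimes\Phi\|=\|w_\alpha\|\,\|\Phi\|$. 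The only looseness is your claim that $(w_\alpha\otimes\Phi)^*(w_\alpha\otimes\Phi)$ equals $\|w_\alpha\|^2\Phi^*\Phi$ on $H$ (it acts on $\mathbb C^d\otimes H$; it is $RR^*$ that lives on $H$), but since $R^*R\Le I\iff RR^*\Le I$ this does not affect the argument, and your handling of the $\Phi\Phi^*$ versus $\Phi^*\Phi$ distinction in (iii) is in fact slightly more careful than the paper's.
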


\begin{proof}
It follows from the hypotheses that $\sup_{\alpha \in \mathbb N^d} \|A^{(j)}_\alpha\| < \infty$, $j = 1, \ldots, d$. For $i,j \in \{1, \ldots, d\}$ and $\alpha \in \mathbb N^d$, we have
\beqn
A^{(j)}_{\alpha+\varepsilon_i} A^{(i)}_{\alpha} = w^{(j)}_{\alpha+\epsilon_i} w^{(i)}_\alpha \Phi(|\alpha|+1)\Phi(|\alpha|)
= w^{(i)}_{\alpha+\varepsilon_j} w^{(j)}_\alpha \Phi(|\alpha|+1)\Phi(|\alpha|)
=A^{(i)}_{\alpha+\varepsilon_j} A^{(j)}_{\alpha}.
\eeqn
Now (i) follows from the commuting condition \eqref{commuting}.

Note that the commuting operator-valued multishift $T$ is a joint contraction if and only if  for all $\alpha \in \mathbb N^d$, $\sum_{j=1}^d A^{(j)*}_{\alpha} A^{(j)}_{\alpha} \Le I_{H}$. Fix $\alpha \in \mathbb N^d$. Then we get
\beqn
I_{H} \geqslant \sum_{j=1}^d A^{(j)*}_{\alpha} A^{(j)}_{\alpha} = \Big(\sum_{j=1}^d |w_\alpha^{(j)}|^2\Big) \Phi(|\alpha|)^* \Phi(|\alpha|) = \|w_\alpha\| \Phi(|\alpha|)^* \|w_\alpha\| \Phi(|\alpha|). 
\eeqn
Thus $ \|w_\alpha\| \|\Phi(|\alpha|)\| \leqslant 1$. Hence $w_\alpha \otimes \Phi(|\alpha|)$ is a contraction. This verifies (ii).

To see (iii), note that $T$ is a row contraction if and only if  for all $\alpha \in \mathbb N^d$, $\sum_{j=1}^d A^{(j)}_{\alpha-\varepsilon_j} A^{(j)*}_{\alpha-\varepsilon_j} \Le I_{H}$. Fix $\alpha \in \mathbb N^d$ with $|\alpha| \geqslant 1$. Then we get
\beqn
I_{H} &\geqslant& \sum_{j=1}^d A^{(j)}_{\alpha-\varepsilon_j} A^{(j)*}_{\alpha-\varepsilon_j} = \Big(\sum_{j=1}^d |w_{\alpha-\varepsilon_j}^{(j)}|^2\Big) \Phi(|\alpha|-1)^* \Phi(|\alpha|-1)\\
&=& \|\tilde{w}_\alpha\| \Phi(|\alpha|-1)^* \|\tilde{w}_\alpha\| \Phi(|\alpha|-1). 
\eeqn
Thus $ \|\tilde{w}_\alpha\| \|\Phi(|\alpha|-1)\| \leqslant 1$. Hence $\tilde{w}_\alpha \otimes \Phi(|\alpha|-1)$ is a contraction. This proves (iii). The proof of (iv) goes similarly along the lines of the proof of (ii).
\end{proof}

We below give an example of a commuting operator-valued multishifts with invertible operator weights which is a row contraction but not a joint contraction.

\begin{example}
Consider the system $\{A^{(j)}_{\alpha} : \alpha \in \mathbb N^d, \
j=1, \ldots, d\}$ of $2 \times 2$ matrices given by
\beqn A^{(j)}_{\alpha} &:=& \frac{1}{2}
\sqrt{\frac{\alpha_j+1}{|\alpha|+1}} \left(\begin{matrix}
\sqrt{\frac{|\alpha|+2}{|\alpha|+3}}+\sqrt{\frac{|\alpha|}{|\alpha|+1}}
&
\sqrt{\frac{|\alpha|+2}{|\alpha|+3}}-\sqrt{\frac{|\alpha|}{|\alpha|+1}}\\
&\vspace{-.3cm}\\
\sqrt{\frac{|\alpha|+2}{|\alpha|+3}}-\sqrt{\frac{|\alpha|}{|\alpha|+1}} &
\sqrt{\frac{|\alpha|+2}{|\alpha|+3}}+\sqrt{\frac{|\alpha|}{|\alpha|+1}} \end{matrix}\right),\quad \alpha \in \mathbb N^d \setminus \{0\},\\
A^{(j)}_0 &:=& \frac{1}{2\sqrt{3}} \left(\begin{matrix}
\sqrt{3}+1 & 1-\sqrt{3}\\
&\vspace{-.1cm}\\
1-\sqrt{3} & \sqrt{3}+1 \end{matrix}\right).
\eeqn
It is easy to see that the operator-valued multishift $T = (T_1, \ldots, T_d)$ on $\mathcal H = \ell^2_{\mathbb C^2}(\mathbb N^d)$ with operator weights $\{A^{(j)}_{\alpha}: \alpha \in \mathbb N^d,\  j=1, \ldots, d\}$  is commuting. Further, $T$ is a row contraction and $T$ is a joint contraction if and only if $d=1$.

To see this, first observe that 
\beqn
A^{(j)}_{\alpha} = w^{(j)}_\alpha \left(\begin{matrix}
\phi(|\alpha|)+\psi(|\alpha|) & \phi(|\alpha|)-\psi(|\alpha|)\\
&\vspace{-.1cm}\\
\phi(|\alpha|)- \psi(|\alpha|) & \phi(|\alpha|)+\psi(|\alpha|) \end{matrix}\right),
\eeqn 
where $w^{(j)}_\alpha := \frac{1}{2}
\sqrt{\frac{\alpha_j+1}{|\alpha|+1}}$, $\phi(n) := \sqrt{\frac{n+2}{n+3}}$ and $\psi(n) := \sqrt{\frac{n}{n+1}}$ for all $\alpha \in \mathbb N^d \setminus \{0\}$, $j \in \{1, \ldots, d\}$ and $n \in \mathbb N$. Now fix $\alpha \in \mathbb N^d$ with $|\alpha| \geqslant 1$. Then we have 
\beqn
\|\tilde{w}_\alpha\|^2 = \begin{cases} \frac{1}{4} \sum_{j=1}^d \frac{\alpha_j}{|\alpha|} = \frac{1}{4} & \mbox{if } |\alpha| \geqslant 2,\\ 
\frac{1}{(2\sqrt{3})^2} & \mbox{if } |\alpha|=1, \end{cases}
\  \quad \|\Phi(|\alpha|-1)\| = \begin{cases} 2 \phi(|\alpha|-1) & \mbox{if } |\alpha| \geqslant 2,\\
 2 \sqrt{3} & \mbox{if } |\alpha| =1.\end{cases}
\eeqn
It is immediate from above that $\|\tilde{w}_\alpha\| \|\Phi(|\alpha|-1)\|  < 1$. Hence it follows from Proposition \ref{example-prop}(iii) that $T$ is a row contraction.

To see the if and only if part, first note that if $d=1$, then the notion of row contraction coincides with that of joint contraction. Suppose that $T$ is a joint contraction. Note that for $\alpha \in \mathbb N^d \setminus \{0\}$,
 \beqn
\|w_\alpha\|^2 = \frac{1}{4} \sum_{j=1}^d  \frac{\alpha_j+1}{|\alpha|+1} = \frac{1}{4} \frac{|\alpha|+d}{|\alpha|+1}.
\eeqn
It follows from Proposition \ref{example-prop}(ii) that for $\alpha =\varepsilon_1$,
\beqn
\|w_{\varepsilon_1}\| \|\Phi(1)\| = \frac{\sqrt{d+1}}{2\sqrt{2}}\, 2\, \phi(1) = \frac{\sqrt{d+1}}{\sqrt{2}} \frac{\sqrt{3}}{\sqrt{4}} \leqslant 1.
\eeqn
Thus  $d \leqslant \frac{5}{3} < 2$. Since $d$ is a positive integer, we conclude that $d$ must be $1$. This completes the proof of aforementioned assertions.
\end{example}

\begin{remark}
The operator-valued multishift  discussed in the foregoing example may be realized as a  $d$-tuple of operators of multiplication by the coordinate functions on a reproducing kernel Hilbert space $\mathcal H(\kappa)$ of $\mathbb C^2$-valued holomorphic functions on the open unit ball $\mathbb B^d$ in $\mathbb C^d.$
Indeed, the positive definite kernel $\kappa$ is given by
\beqn \kappa(z,w) =\begin{pmatrix} \frac{1}{(1-\langle{z, w \rangle})^2} & \frac{\langle{z, w \rangle}}{(1-\langle{z, 
w \rangle})}\\ \frac{\langle{z, w \rangle}}{(1-\langle{z, w \rangle})} & \frac{1}{(1-\langle{z, w 
\rangle})^2}\end{pmatrix} = \begin{pmatrix} 1 & 0 \\ 0 & 1 \end{pmatrix}
+ \sum_{\substack{\alpha \in \mathbb N^d \\ |\alpha| \geq 1}} \frac{|\alpha|!}{\alpha !}\begin{pmatrix} |\alpha|+1 & 
1\\ 1 & |\alpha|+1 \end{pmatrix} z^{\alpha} \bar{w}^{\alpha}\eeqn
for all $z $ and $w$ in $\mathbb B^d$.
To see this, first note that if $\kappa_1$ and $\kappa_2$ are two positive definite kernels such that $\kappa_2 \leq \kappa_1$, then $\kappa$ given by
\beqn \kappa :=\begin{pmatrix} \kappa_1 & \kappa_2\\ \kappa_2 & \kappa_1 \end{pmatrix}\eeqn 
 is a positive definite kernel. Now consider the positive definite kernels $\kappa_1$ and $\kappa_2$ given by
\beqn \kappa_1(z,w) = \frac{1}{(1-\langle{z, w \rangle})^2}\ \mbox{ and } 
\ \kappa_2(z,w) = \frac{\langle{z, w \rangle}}{(1-\langle{z, w \rangle})} \mbox{ for all }
z ,w \in \mathbb B^d.\eeqn 
Then for all $z, w \in \mathbb B^d,$
\beqn \kappa(z,w) =\begin{pmatrix} \frac{1}{(1-\langle{z, w \rangle})^2} & \frac{\langle{z, w \rangle}}{(1-\langle{z, 
w \rangle})}\\ \frac{\langle{z, w \rangle}}{(1-\langle{z, w \rangle})} & \frac{1}{(1-\langle{z, w 
\rangle})^2}\end{pmatrix} = \begin{pmatrix} 1 & 0 \\ 0 & 1 \end{pmatrix}
+ \sum_{\substack{\alpha \in \mathbb N^d \\ |\alpha| \geq 1}} \frac{|\alpha|!}{\alpha !}\begin{pmatrix} |\alpha|+1 & 
1\\ 1 & |\alpha|+1 \end{pmatrix} z^{\alpha} \bar{w}^{\alpha}\eeqn
is a positive definite kernel. Consider the multisequence $\mathscr B = \{B_\alpha \in M_2(\mathbb C) : \alpha \in \mathbb N^d\}$ given by
\beqn  B_{\alpha} = \begin{cases}
 \begin{pmatrix} 1 & 0 \\ 0 & 1 \end{pmatrix}  & \mbox{ if } \alpha =0,\\ \notag
  \sqrt{\frac{|\alpha|!}{\alpha !}} \begin{pmatrix}  |\alpha|+1 & 1 \\ 1 & |\alpha|+1 \end{pmatrix}^{1/2} & \mbox{ if } |\alpha| \geq 1.
\end{cases}
\eeqn
Then we get
\beqn
B_\alpha = \begin{cases} \begin{pmatrix} 1 & 0 \\ 0 & 1 \end{pmatrix}  & \mbox{ if } \alpha =0,\\
\frac{1}{2}\sqrt{\frac{|\alpha|!}{\alpha !}}  \begin{pmatrix} 
\sqrt{|\alpha|+2}+\sqrt{|\alpha|} & \sqrt{|\alpha|+2}-\sqrt{|\alpha|}\\ 
& \vspace{-.3cm}&\\
\sqrt{|\alpha|+2}-\sqrt{|\alpha|} & 
\sqrt{|\alpha|+2}+\sqrt{|\alpha|} \end{pmatrix} & \mbox{ if } |\alpha| \geqslant 1.\end{cases}
\eeqn
Thus the $d$-tuple of operators of multiplication by the coordinate functions on the reproducing kernel Hilbert space $\mathcal H(\kappa)$ associated with positive definite kernel $\kappa$ may be realized as the commuting operator-valued multishift with operator weights $\{A^{(j)}_{\alpha}= B_{\alpha} B_{\alpha+\varepsilon_j}^{-1}: \alpha \in \mathbb N^d, \  j=1, \ldots, d\}$ (see Remark \ref{rm5.2}).
Hence,
\beqn A^{(j)}_{\alpha} &=& \frac{1}{2} \sqrt{\frac{\alpha_j+1}{|\alpha|+1}} \left(\begin{matrix} 
\sqrt{\frac{|\alpha|+2}{|\alpha|+3}}+\sqrt{\frac{|\alpha|}{|\alpha|+1}} & 
\sqrt{\frac{|\alpha|+2}{|\alpha|+3}}-\sqrt{\frac{|\alpha|}{|\alpha|+1}}\\ 
& \vspace{-.1cm}\\
\sqrt{\frac{|\alpha|+2}{|\alpha|+3}}-\sqrt{\frac{|\alpha|}{|\alpha|+1}} & 
\sqrt{\frac{|\alpha|+2}{|\alpha|+3}}+\sqrt{\frac{|\alpha|}{|\alpha|+1}} \end{matrix}\right), \quad \alpha \in \mathbb N^d \setminus \{0\},\\
A^{(j)}_0 &=& \frac{1}{2\sqrt{3}} \left(\begin{matrix}
\sqrt{3}+1 & 1-\sqrt{3}\\
&\vspace{-.1cm}\\
1-\sqrt{3} & \sqrt{3}+1 \end{matrix}\right).
\eeqn
\end{remark}

As in \cite[Sections 5.3, 5.4]{CPT}, the joint subnormal, joint hyponormal and joint $m$-isometric operator-valued multishifts can be characterized analogously. 

\section{Strong Circularity, Analyticity and Wandering Subspace Property}

A commuting $d$-tuple $S=(S_1, \ldots, S_d)$ on a Hilbert space $\mathcal H$ is said to be {\it circular} if for every $\lambda=(\lambda_1, \ldots, \lambda_d) \in \mathbb T^d$, there exists a unitary operator $U_\lambda$ on $\mathcal H$ such that $$U^*_\lambda S_j U_\lambda = \lambda_j S_j\ \text{for all}\ j=1, \ldots, d.$$
Further, $S$ is said to be {\it strongly circular} if $U_\lambda$ can be chosen to be a strongly continuous unitary representation of $\mathbb T^d$ in the following sense: For every $h \in \mathcal H$, the function
$\lambda \mapsto U_\lambda h$ is continuous on $\mathbb T^d$.

The above notion for $d=1$ was introduced and studied in \cite{AHHK}. Since then these operators became a centre of attraction and were studied considerably thereafter (refer to \cite{S}, \cite{Ge}, \cite{Ml}, \cite{BM-1}). The circularity of a classical multishift was first obtained in \cite[Corollary 3]{JL}. 
The circularity of a weighted shift on a directed tree was established in \cite[Theorem 3.3.1]{JJS}.
In view of Proposition \ref{directed-tree}, the following result generalizes \cite[Proposition 3.2.1]{CPT} and a special case of \cite[Theorem 3.3.1]{JJS}.

\begin{proposition}\label{circular}
Let $T = (T_1, \ldots, T_d)$ be a commuting operator-valued multishift on $\mathcal H = \oplus_{\alpha \in \mathbb N^d} H_\alpha$ with operator weights $\{A^{(j)}_{\alpha}: \alpha \in \mathbb N^d,\  j=1, \ldots, d\}$.  Then $T$ is strongly circular.
\end{proposition}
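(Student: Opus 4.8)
The plan is to construct, for each $\lambda \in \mathbb T^d$, a diagonal unitary $U_\lambda$ on $\mathcal H = \oplus_{\alpha \in \mathbb N^d} H_\alpha$ that acts as a scalar multiple of the identity on each summand $H_\alpha$, with the scalar depending on $\lambda$ and $\alpha$ in a multiplicative way. Specifically, I would set $U_\lambda(\oplus_{\alpha} x_\alpha) := \oplus_{\alpha} \lambda^{\alpha} x_\alpha$, where $\lambda^\alpha = \prod_{k=1}^d \lambda_k^{\alpha_k}$ as in the notation section. Since $|\lambda^\alpha| = 1$ for $\lambda \in \mathbb T^d$, each such $U_\lambda$ is clearly a well-defined unitary operator on $\mathcal H$ (it preserves the norm on each summand and hence on the whole direct sum), and $U_\lambda^* = U_{\overline\lambda}$.

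Next I would verify the intertwining relation $U_\lambda^* T_j U_\lambda = \lambda_j T_j$ for each $j = 1, \ldots, d$. This is a direct computation: for $\oplus_\alpha x_\alpha$ in the (dense, indeed here bounded so full) domain, $T_j U_\lambda(\oplus_\alpha x_\alpha) = T_j(\oplus_\alpha \lambda^\alpha x_\alpha) = \oplus_\alpha A^{(j)}_{\alpha - \varepsilon_j} \lambda^{\alpha - \varepsilon_j} x_{\alpha - \varepsilon_j}$, while $U_\lambda^*$ applied to this multiplies the $\alpha$-th component by $\overline{\lambda}^\alpha = \lambda^{-\alpha}$, giving $\oplus_\alpha \lambda^{-\alpha} \lambda^{\alpha - \varepsilon_j} A^{(j)}_{\alpha-\varepsilon_j} x_{\alpha-\varepsilon_j} = \oplus_\alpha \lambda^{-\varepsilon_j} A^{(j)}_{\alpha-\varepsilon_j} x_{\alpha-\varepsilon_j} = \lambda_j^{-1} T_j(\oplus_\alpha x_\alpha)$. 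Rearranging yields $U_\lambda^* T_j U_\lambda = \lambda_j^{-1} T_j$; replacing $\lambda$ by $\overline\lambda$ (equivalently, relabelling) gives the stated form with $\lambda_j$, so $T$ is circular. One should be slightly careful about the convention that $A^{(j)}_{\alpha-\varepsilon_j}$ and $x_{\alpha-\varepsilon_j}$ are interpreted as zero when $\alpha_j = 0$, but the identity holds trivially on those components.

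Finally, for strong circularity I would check that $\lambda \mapsto U_\lambda h$ is continuous on $\mathbb T^d$ for every $h = \oplus_\alpha x_\alpha \in \mathcal H$. Fix $\varepsilon > 0$; choose a finite set $F \subseteq \mathbb N^d$ with $\sum_{\alpha \notin F} \|x_\alpha\|^2 < \varepsilon^2/16$. For $\lambda, \mu \in \mathbb T^d$ we have $\|U_\lambda h - U_\mu h\|^2 = \sum_\alpha |\lambda^\alpha - \mu^\alpha|^2 \|x_\alpha\|^2 \le \sum_{\alpha \in F} |\lambda^\alpha - \mu^\alpha|^2 \|x_\alpha\|^2 + 4\sum_{\alpha \notin F}\|x_\alpha\|^2$; the second term is $< \varepsilon^2/4$, and since $F$ is finite and each map $\lambda \mapsto \lambda^\alpha$ is continuous, the first term can be made $< \varepsilon^2/4$ by taking $\mu$ close to $\lambda$. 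Hence $\lambda \mapsto U_\lambda h$ is continuous, so $\{U_\lambda\}$ is a strongly continuous unitary representation of $\mathbb T^d$ and $T$ is strongly circular. I do not anticipate a genuine obstacle here; the only point requiring care is the bookkeeping with the $\varepsilon_j$-shift conventions in the intertwining identity, and the standard $\varepsilon$-split argument for the strong continuity.
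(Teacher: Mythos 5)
Your proposal is correct and follows essentially the same route as the paper's proof: the paper defines $U_\lambda(\oplus_\alpha x_\alpha)=\oplus_\alpha \overline{\lambda}^\alpha x_\alpha$ (so the conjugate is built in and the relabelling step is unnecessary), verifies the intertwining identity by the same componentwise computation, and proves strong continuity by the identical finite-tail $\epsilon$-splitting argument with the tail bounded by $4\epsilon$. The only difference is the cosmetic choice of $\lambda^\alpha$ versus $\overline{\lambda}^\alpha$ in the definition of $U_\lambda$, which you handle correctly.
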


\begin{proof}
For $\lambda = (\lambda_1, \ldots, \lambda_d) \in \mathbb T^d$, define the linear operator $U_\lambda$ on ${ \mathcal H}$ by
\beqn
U_\lambda(\oplus_{\alpha \in \mathbb N^d}
x_{\alpha}) = \oplus_{\alpha \in \mathbb N^d}\overline{\lambda}^\alpha x_{\alpha}, \quad \oplus_{\alpha \in \mathbb N^d}x_{\alpha}
\in \mathcal H.
\eeqn
Clearly, $U_\lambda$ is a unitary operator on ${\mathcal H}$ and its adjoint is given by
\beqn
U^*_\lambda(\oplus_{\alpha \in \mathbb N^d}
x_{\alpha}) = \oplus_{\alpha \in \mathbb N^d}\lambda^\alpha x_{\alpha}, \quad \oplus_{\alpha \in \mathbb N^d}x_{\alpha}
\in \mathcal H.
\eeqn
For $\oplus_{\alpha \in \mathbb N^d}x_{\alpha}\in \mathcal H$ and $j \in \{1, \ldots, d\}$, we have
\beqn
U^*_\lambda T_j U_\lambda(\oplus_{\alpha \in \mathbb N^d}x_{\alpha})&=& U^*_\lambda T_j(\oplus_{\alpha \in \mathbb N^d}\overline{\lambda}^{\alpha}x_{\alpha})
= U^*_\lambda (\oplus_{\alpha \in \mathbb N^d}\overline{\lambda}^{\alpha-\varepsilon_j} A^{(j)}_{\alpha-\varepsilon_j}
x_{\alpha-\varepsilon_j})\\
&=& \oplus_{\alpha \in \mathbb N^d}\lambda^\alpha \overline{\lambda}^{\alpha-\varepsilon_j} A^{(j)}_{\alpha-\varepsilon_j} x_{\alpha-\varepsilon_j} = \oplus_{\alpha \in \mathbb N^d} \lambda_j A^{(j)}_{\alpha-\varepsilon_j} x_{\alpha-\varepsilon_j}\\
&=& \lambda_j T_j(\oplus_{\alpha \in \mathbb N^d}x_{\alpha}).
\eeqn
This proves that $T$ is circular. To see the strong circularity, let $\{\lambda^{(n)}\}_{n \in \mathbb N}$ be a sequence in $\mathbb T^d$ which converges to $\lambda \in \mathbb T^d$. Then for $\oplus_{\alpha \in \mathbb N^d}x_{\alpha} \in \mathcal H$, we get
\beqn
\|(U_{\lambda^{(n)}} - U_\lambda)\oplus_{\alpha \in \mathbb N^d}x_{\alpha}\|^2 = \sum_{\alpha \in \mathbb N^d} \big|\overline{\lambda^{(n)}}^\alpha - \overline{\lambda}^\alpha \big|^2 \|x_\alpha\|^2.
\eeqn
Let $\epsilon > 0$. Then there exists $n_1 \in \mathbb N$ such that $\sum_{\underset{|\alpha|>n_1}{\alpha \in \mathbb N^d}} \|x_\alpha\|^2 < \epsilon$. Further, for each $\alpha \in \mathbb N^d$, there exists $n_\alpha \in \mathbb N$ such that $\big|\overline{\lambda^{(n)}}^\alpha - \overline{\lambda}^\alpha \big|^2 < \epsilon$ for all $n \Ge n_\alpha$. Let $n_2 = \max\{n_\alpha : |\alpha| \Le n_1\}$. Then for all $n \Ge n_2$, we have
\beqn
\|(U_{\lambda^{(n)}} - U_\lambda)\oplus_{\alpha \in \mathbb N^d}x_{\alpha}\|^2 &=& \sum_{\alpha \in \mathbb N^d} \big|\overline{\lambda^{(n)}}^\alpha - \overline{\lambda}^\alpha \big|^2 \|x_\alpha\|^2\\
&=&\sum_{\underset{|\alpha|\Le n_1}{\alpha \in \mathbb N^d}} \big|\overline{\lambda^{(n)}}^\alpha - \overline{\lambda}^\alpha \big|^2 \|x_\alpha\|^2 + \sum_{\underset{|\alpha|> n_1}{\alpha \in \mathbb N^d}} \big|\overline{\lambda^{(n)}}^\alpha - \overline{\lambda}^\alpha \big|^2 \|x_\alpha\|^2\\
&<& \epsilon \sum_{\underset{|\alpha|\Le n_1}{\alpha \in \mathbb N^d}} \|x_\alpha\|^2 + 4 \epsilon.
\eeqn
This completes the proof.
\end{proof}

The following result is an immediate consequence of the preceding proposition.

\begin{corollary}\label{same-spectrum}
Let $T = (T_1, \ldots, T_d)$ be a commuting operator-valued multishift on $\mathcal H = \oplus_{\alpha \in \mathbb N^d} H_\alpha$ with operator weights $\{A^{(j)}_{\alpha}: \alpha \in \mathbb N^d,\  j=1, \ldots, d\}$. Then $\sigma(T),$ $ \sigma_p(T),$ $ \sigma_l(T)$ and $\sigma_e(T)$ have the polycircular symmetry. In particular, $\sigma(T)$ coincides with $\sigma(T^*)$.
\end{corollary}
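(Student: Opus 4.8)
The plan is to deduce everything from the strong circularity established in Proposition~\ref{circular}. First I would recall the basic principle that a $d$-tuple $T$ and a unitarily equivalent $d$-tuple $U^*TU$ share the same Taylor spectrum, joint point spectrum, joint left spectrum and joint essential spectrum, since all four are defined via the Koszul complex (or, for $\sigma_p$, via common eigenvectors) and these notions are invariant under unitary conjugation. By Proposition~\ref{circular}, for each $\lambda\in\mathbb T^d$ there is a unitary $U_\lambda$ on $\mathcal H$ with $U_\lambda^*T_jU_\lambda=\lambda_j T_j$ for all $j$, i.e. $U_\lambda^*TU_\lambda=\lambda\cdot T:=(\lambda_1T_1,\ldots,\lambda_dT_d)$.

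Next I would observe the elementary scaling identity for each of the four spectra: for any commuting $d$-tuple $S$ and any $\lambda\in\mathbb T^d$ one has $\sigma_\bullet(\lambda\cdot S)=\lambda\cdot\sigma_\bullet(S)$, where $\lambda\cdot\Omega=\{\lambda\cdot z:z\in\Omega\}$ and $\bullet\in\{\,,p,l,e\}$. This is immediate from the fact that $\lambda_j(S_j-\mu_jI)=(\lambda_jS_j)-(\lambda_j\mu_j)I$, so the Koszul complex of $\lambda\cdot S-\lambda\cdot\mu$ is obtained from that of $S-\mu$ by composing with the invertible diagonal maps $\lambda^{J}$ on the exterior-algebra components; invertibility, exactness, finite-dimensionality of cohomology, and existence of a joint eigenvector are all preserved. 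Combining this with the unitary invariance gives, for every $\lambda\in\mathbb T^d$,
\[
\sigma_\bullet(T)=\sigma_\bullet(U_\lambda^*TU_\lambda)=\sigma_\bullet(\lambda\cdot T)=\lambda\cdot\sigma_\bullet(T),
\]
which is precisely the statement that each of $\sigma(T),\sigma_p(T),\sigma_l(T),\sigma_e(T)$ has polycircular symmetry.

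For the final assertion, I would use the polycircular symmetry of $\sigma(T)$ together with the fact, recorded in Remark~\ref{point-spectrum-adjoint}, that $0\in\sigma_p(T^*)\subseteq\sigma(T^*)$; equivalently, applying the general relation $\sigma(T^*)=\{\bar z:z\in\sigma(T)\}$ (where $\bar z$ is the coordinatewise conjugate), one has $0\in\sigma(T)$. Since $\sigma(T)$ is polycircularly symmetric and contains $0$, and since $\sigma(T^*)=\overline{\sigma(T)}:=\{\bar z : z\in\sigma(T)\}$, I would show $\overline{\sigma(T)}=\sigma(T)$: given $z\in\sigma(T)$, pick $\lambda\in\mathbb T^d$ with $\lambda_jz_j=\bar z_j$ for each $j$ (take $\lambda_j=\bar z_j/z_j$ if $z_j\neq0$ and $\lambda_j=1$ otherwise), so that $\bar z=\lambda\cdot z\in\lambda\cdot\sigma(T)=\sigma(T)$; the reverse inclusion is symmetric. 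Hence $\sigma(T)=\sigma(T^*)$.

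The only mild obstacle is making the scaling identity $\sigma_\bullet(\lambda\cdot S)=\lambda\cdot\sigma_\bullet(S)$ precise for the Taylor and essential spectra, since one must check that multiplication by the scalars $\lambda^{J}$ on the Koszul complex is a cochain isomorphism commuting suitably with the differentials up to the harmless diagonal factors; this is standard but worth stating cleanly. Everything else is bookkeeping: unitary invariance of the spectra, and the coordinatewise-conjugate description of $\sigma(T^*)$ in terms of $\sigma(T)$. I do not anticipate any genuine difficulty beyond being careful with these standard facts.
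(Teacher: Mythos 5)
Your proposal is correct and follows exactly the route the paper intends: the paper states this corollary as an immediate consequence of Proposition~\ref{circular}, and your argument simply makes explicit the standard facts (unitary invariance of the four joint spectra, the scaling identity $\sigma_\bullet(\lambda\cdot S)=\lambda\cdot\sigma_\bullet(S)$ for $\lambda\in\mathbb T^d$, and $\sigma(T^*)=\{\bar z:z\in\sigma(T)\}$) that the authors leave unstated. The only superfluous step is invoking $0\in\sigma(T)$ for the last assertion; the polycircular symmetry alone already yields $\{\bar z:z\in\sigma(T)\}=\sigma(T)$.
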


The proof of the following proposition goes essentially along the lines of the proof of \cite[Proposition 3.2.4]{CPT}. The ideas involved in this proof are similar to that of  \cite[Lemma 3.8]{CY}. However, we include all the details for the sake of completeness.

\begin{proposition}\label{connected}
Let $T = (T_1, \ldots, T_d)$ be a commuting operator-valued multishift on $\mathcal H = \oplus_{\alpha \in \mathbb N^d} H_\alpha$ with operator weights $\{A^{(j)}_{\alpha}: \alpha \in \mathbb N^d,\  j=1, \ldots, d\}$. Then the Taylor spectrum of $T$ is connected.
\end{proposition}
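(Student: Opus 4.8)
The plan is to reduce the statement to a known connectedness criterion for Taylor spectra of commuting tuples whose components all vanish at a common joint eigenvector, using the strong circularity established in Proposition \ref{circular}. First I would record, from Remark \ref{point-spectrum-adjoint}, that $0 \in \sigma_p(T^*)$, so in particular $0 \in \sigma(T^*)$; by Corollary \ref{same-spectrum}, $\sigma(T) = \sigma(T^*)$, hence $0 \in \sigma(T)$. Next I would invoke the scaling symmetry: for each $t \in (0,1]$ and each $\lambda \in \mathbb{T}^d$, the homogeneity argument behind Proposition \ref{circular} shows that $tT = (tT_1, \ldots, tT_d)$ is, up to the obvious rescaling, a commuting operator-valued multishift with weights $t A^{(j)}_\alpha$ scaled appropriately along $|\alpha|$; more directly, I would argue that $\sigma(T)$ has polycircular symmetry (Corollary \ref{same-spectrum}) and then show it is invariant under multiplication by all scalars in the closed unit disc by a limiting argument of the following type.

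The key step is the following: the Taylor spectrum $\sigma(T)$ of a commuting $d$-tuple is a compact subset of $\mathbb{C}^d$, and I want to show that the line segment joining any point $z \in \sigma(T)$ to the origin lies in $\sigma(T)$; connectedness then follows, since every point of the compact set can be joined to $0$ inside the set. To establish the segment property I would use the projective spectral mapping theorem for the Taylor spectrum together with the grading by $|\alpha|$: define, for $t \in \mathbb{C}$ with $|t| \le 1$, the unitary-like diagonal map $V_t(\oplus_\alpha x_\alpha) = \oplus_\alpha t^{|\alpha|} x_\alpha$ when $|t| = 1$ (a genuine unitary, a restatement of strong circularity applied to the diagonal $\lambda = (t,\ldots,t)$), which gives $V_t^* T_j V_t = t T_j$, so $t z \in \sigma(T)$ for all $|t| = 1$ and $z \in \sigma(T)$, i.e. polycircular (indeed full circular) symmetry. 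To pass from $|t| = 1$ to $|t| < 1$, I would approximate $T$ in norm by the truncated multishifts $T^{(N)}$ obtained by setting $A^{(j)}_\alpha = 0$ for $|\alpha| \ge N$; each $T^{(N)}$ is nilpotent (a finite product of the weights eventually vanishes), so $\sigma(T^{(N)}) = \{0\}$, which is connected, and by upper semicontinuity of the Taylor spectrum under norm perturbations one concludes that $\sigma(T)$ cannot disconnect from $0$.

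I would therefore organize the final argument as: (1) $0 \in \sigma(T)$; (2) by strong circularity $\sigma(T)$ is invariant under the $\mathbb{T}^d$-action, in fact under the diagonal circle action $z \mapsto tz$, $|t| = 1$; (3) the truncations $T^{(N)} \to T$ in operator norm with $\sigma(T^{(N)}) = \{0\}$; (4) if $\sigma(T)$ were disconnected, write it as a disjoint union of two nonempty compact sets $F_0 \ni 0$ and $F_1$; by the circle symmetry $F_1$ is a union of full circles bounded away from $0$, so there is an annular region $\{r_1 < \|z\| < r_2\}$ (in an appropriate norm, e.g.\ $\max_j |z_j|$ adapted via the symmetry) separating them; choosing $N$ large enough that $\|T - T^{(N)}\|$ is smaller than the distance from $\sigma(T^{(N)}) = \{0\}$ to $F_1$, upper semicontinuity forces $\sigma(T^{(N)})$ to meet $F_1$, a contradiction. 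Hence $\sigma(T)$ is connected.

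\textbf{Main obstacle.} The delicate point is step (4): making the separation-by-an-annulus argument precise, since $\sigma(T) \subseteq \mathbb{C}^d$ and "disconnected" does not immediately produce a rotationally-invariant separating hypersurface without using the circle symmetry carefully; and one must be careful that upper semicontinuity of the Taylor spectrum (continuity from above in the Hausdorff sense under norm-continuous perturbations of a commuting tuple, which does hold) is applied to a genuinely commuting approximating family — this is why the truncations $T^{(N)}$, which remain commuting because they satisfy \eqref{commuting} trivially past level $N$, are the right choice rather than an arbitrary perturbation. An alternative, cleaner route, which I would fall back on if the separation argument gets technical, is to cite directly the result that a commuting $d$-tuple which is a limit in norm of commuting tuples with connected (here singleton) spectrum has connected spectrum, or to use that $\sigma(T) = \bigcap_N \overline{\bigcup_{M \ge N} \sigma(T^{(M)})}$-type lim-sup descriptions together with the circle symmetry to conclude connectedness directly.
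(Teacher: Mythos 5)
There is a genuine gap in your argument, and it occurs at the step on which everything else depends: the claim that the truncated multishifts $T^{(N)}$ (obtained by setting $A^{(j)}_\alpha = 0$ for $|\alpha| \geqslant N$) converge to $T$ in operator norm. They do not. A direct computation gives $\|T_j - T_j^{(N)}\| = \sup_{|\alpha| \geqslant N} \|A^{(j)}_\alpha\|$, which tends to $0$ only when the operator weights themselves tend to $0$ in norm along $|\alpha| \to \infty$. For the unweighted unilateral shift ($d=1$, all $A_\alpha = I$) this quantity equals $1$ for every $N$, while $\sigma(T)$ is the closed unit disc and $\sigma(T^{(N)}) = \{0\}$ — so the truncations are a strong-operator, not a norm, approximation, and the Taylor spectrum has no useful semicontinuity along strong limits. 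Since steps (3) and (4) of your outline both rest on this norm convergence, the separation-by-an-annulus contradiction cannot be run, and the fallback you mention (citing that a norm limit of commuting tuples with connected spectrum has connected spectrum) is inapplicable for the same reason. A secondary caveat: even with a genuine norm approximation in hand, upper semicontinuity of the Taylor spectrum alone only confines $\sigma(T^{(N)})$ inside neighborhoods of $\sigma(T)$; to prevent an isolated spectral part from disappearing you would additionally need the continuity of the associated Shilov idempotents, which is exactly the kind of tool the paper uses, but in a perturbation-free way.

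The paper's route is different and avoids approximation entirely: assuming $\sigma(T^*) = F_1 \sqcup F_2$ with $0 \in F_1$, the Shilov idempotent theorem yields invariant subspaces $\mathcal W_1 \dotplus \mathcal W_2 = \mathcal H$ with $\sigma(T^*|_{\mathcal W_l}) = F_l$; the spectral mapping theorem forces every vector of $\ker D^{(k)} = \ker(T_1^{*k}, \ldots, T_d^{*k})$ to lie in $\mathcal W_1$, and since $\bigcup_k \ker D^{(k)}$ is dense (Corollary \ref{dense-ker}), $\mathcal W_2 = \{0\}$ and $F_2 = \emptyset$. If you want to salvage a proof in your spirit, you would need to replace the truncations by this idempotent argument or by some other mechanism that exploits the dense union of joint kernels of powers of $T^*$ rather than a norm perturbation.
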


\begin{proof}
In view of Corollary \ref{same-spectrum}, we prove that $\sigma(T^*)$ is connected. By Remark \ref{point-spectrum-adjoint}, $0 \in \sigma(T^*)$.  Let $F_1$ be the connected component of $\sigma(T^*)$ containing $0$ and let $F_2=\sigma(T^*) \setminus F_1.$ By the Shilov Idempotent Theorem
\cite[Application 5.24]{Cu}, there exist closed invariant subspaces
$\mathcal W_1, \mathcal W_2$ of $T^*$ such that $\mathcal H =
\mathcal W_1 \dotplus \mathcal W_2$ (vector space direct sum of $\mathcal W_1$ and $\mathcal W_2$) and $\sigma({T^*}|_{\mathcal W_l}) =F_l$
for $l=1, 2.$
For $k \in \mathbb N$, let $D^{(k)} = (T_1^{*k}, \ldots, T_d^{*k})$
and $h \in \ker D^{(k)}$. Then $h=x + y$ for $x \in
\mathcal W_1$ and $y \in \mathcal W_2.$ It follows that
$T^{*k}_{j} x=0=T^{*k}_{j}y$ for all $j=1, \ldots, d.$
If $y$ is nonzero, then
$0 \in \sigma_p(D^{(k)}|_{\mathcal W_2}) \subseteq \sigma(D^{(k)}|_{\mathcal W_2}),$
and hence by the spectral mapping property \cite{Cu}, $0 \in \sigma(D^{(1)}|_{\mathcal W_2})=\sigma({T^*}|_{\mathcal W_2}).$
Since $0 \notin F_2,$ we must have $y =0.$ It follows that $\mathcal
W_1$ contains the subspace $\cup_{k \in \mathbb N}
\ker D^{(k)},$ which is dense in $\mathcal H$ by Corollary \ref{dense-ker}.  Hence
$\mathcal W_1 = \mathcal H.$ Thus
the Taylor spectrum of $T^*$ is equal to $F_1$ and hence $\sigma(T^*)$ is connected.
\end{proof}

\begin{remark}
As a consequence of Corollary \ref{same-spectrum} and the preceding proposition, it follows that if $T = (T_1, \ldots, T_d)$ is a commuting operator-valued multishift on $\mathcal H = \oplus_{\alpha \in \mathbb N^d} H_\alpha$ with operator weights $\{A^{(j)}_{\alpha}: \alpha \in \mathbb N^d,\  j=1, \ldots, d\},$ then the Taylor spectrum of $T$ is Reinhardt.
\end{remark}

\begin{theorem}\label{ana-wand}
Let $T = (T_1, \ldots, T_d)$ be a commuting operator-valued multishift on $\mathcal H = \oplus_{\alpha \in \mathbb N^d} H_\alpha$ with operator weights $\{A^{(j)}_{\alpha}: \alpha \in \mathbb N^d,\  j=1, \ldots, d\}$. Then the following statements hold:
\begin{enumerate}
\item[(i)] $T$ is separately analytic. That is, for each $j \in \{1, \ldots, d\}$, $T_j$ is analytic.
\item[(ii)] $T$ has wandering subspace property. That is,
$$\bigvee_{\alpha \in \mathbb N^d} T^\alpha(\ker T^*) = \mathcal H.$$
\end{enumerate}
\end{theorem}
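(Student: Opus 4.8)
The plan is to prove both parts by direct computation using the explicit formulas for $T^\beta$ and $T^{*\beta}$ from Proposition \ref{moments}, together with the description of $\ker T^*$ in Proposition \ref{kerT*}. For part (i), fix $j \in \{1,\ldots,d\}$ and let $h = \oplus_{\alpha \in \mathbb N^d} x_\alpha \in \bigcap_{n \in \mathbb N} T_j^n(\mathcal H)$. For each $n$, write $h = T_j^n g^{(n)}$ for some $g^{(n)} \in \mathcal H$; by Proposition \ref{moments}(iii) applied with $\beta = n\varepsilon_j$, the $\alpha$-component of $T_j^n g^{(n)}$ is $B(\alpha, n\varepsilon_j)\, g^{(n)}_{\alpha - n\varepsilon_j}$, which vanishes whenever $\alpha_j < n$. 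Hence $x_\alpha = 0$ for every $\alpha$ with $\alpha_j < n$, and since $n$ is arbitrary, $x_\alpha = 0$ for all $\alpha$, i.e. $h = 0$. This gives $\bigcap_{n} T_j^n(\mathcal H) = \{0\}$, so $T_j$ is analytic.

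For part (ii), set $\mathcal M := \bigvee_{\alpha \in \mathbb N^d} T^\alpha(\ker T^*)$ and let $h = \oplus_{\alpha} x_\alpha \in \mathcal H \ominus \mathcal M$; the goal is to show $h = 0$. First I would show by induction on $|\alpha|$ that each component $x_\alpha = 0$. For the base case, since $H_0 \subseteq \ker T^*$ (this is part of \eqref{kerT*-eq}) and $T^0 = I$, orthogonality of $h$ to $\ker T^*$ forces $x_0 = 0$. For the inductive step, suppose $x_\gamma = 0$ for all $\gamma$ with $|\gamma| < n$ and fix $\alpha$ with $|\alpha| = n$. The key point is that $h \perp T^\alpha(\ker T^* )$ means $\langle h, T^\alpha v\rangle = 0$, equivalently $\langle T^{*\alpha} h, v\rangle = 0$, for all $v \in \ker T^*$; by Proposition \ref{moments}(iv) the $0$-component of $T^{*\alpha} h$ is $C(0,\alpha)\, x_\alpha$, and one checks (unwinding the definition of $C$ and $B$) that $C(0,\alpha) = B_\alpha^{*}$ where $B_\alpha = B(\alpha,\alpha): H_0 \to H_\alpha$. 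So the condition reads $\langle B_\alpha^* x_\alpha, v_0\rangle_{H_0} = 0$ for all $v_0 \in H_0$, i.e. $B_\alpha^* x_\alpha = 0$. This alone is not enough to conclude $x_\alpha = 0$ since $B_\alpha$ need not be bounded below; the fix is to exploit orthogonality against $T^\gamma(\ker T^*)$ for the other summands $\ker A^*_\delta$ appearing in \eqref{kerT*-eq} as well, or—cleaner—to use the already-established vanishing of the lower-order components together with the reproducing-type identity $T^{*\alpha} T^\alpha$ from Proposition \ref{moments}(v). Concretely, I expect the right route is: pick $w \in H_\alpha$ arbitrary, note $T^\alpha(\text{appropriate element of }\ker T^*)$ together with lower-order correction terms (which are already orthogonal to $h$ by the inductive hypothesis applied componentwise) spans enough of the $\alpha$-slice $H_\alpha$ of $\mathcal H$, and deduce $x_\alpha = 0$.

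The main obstacle is precisely this inductive step in (ii): unlike the scalar-weighted case, the operator weights $A^{(j)}_\alpha$ are not assumed invertible or bounded below, so $B_\alpha$ need not have dense range and $T^\alpha(\ker T^*)$ need not fill up the slice $H_\alpha$. The resolution I anticipate is to observe that $\mathcal H \ominus \mathcal M$ is invariant under each $T_j^*$ (since $\mathcal M$ is generated by $T$-orbits and is $T$-invariant, its orthogonal complement is $T^*$-invariant), combine this with Corollary \ref{dense-ker} which says $\bigcup_k \ker D^{(k)}$ is dense, and run an argument showing that any $T^*$-invariant subspace meeting the requirement is trivial—or, failing that, carefully show that the closure of $\bigvee_\alpha T^\alpha(H_0)$ together with the contributions from $\bigoplus_{\delta \neq 0}\ker A^*_\delta$ reconstructs all of $\mathcal H$ slice by slice. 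I would first try the clean invariant-subspace approach and fall back on the explicit slice-by-slice induction if needed.
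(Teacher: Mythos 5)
Part (i) of your proposal is correct and is essentially the paper's own argument: the range of $T_j^k$ is contained in the subspace of vectors whose components vanish for $\alpha_j\leqslant k-1$, and the intersection over $k$ of these subspaces is trivial. Nothing more needs to be said there.

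Part (ii), however, is a plan with the decisive step missing. You correctly arrive at the point where orthogonality of $h$ to $T^\beta(\ker T^*)$ yields only $B_\beta^*x_\beta=0$, together with the analogous relations $x_\alpha\perp B(\alpha,\beta)\bigl(\bigcap_{j}\ker A^{(j)*}_{\alpha-\beta-\varepsilon_j}\bigr)$ for $\beta<\alpha$, and you correctly observe that this is insufficient because the weights need not have dense range. But the resolution is never pinned down, and the route you say you would try first does not work: Corollary \ref{dense-ker} concerns the kernels of $D^{(k)}$ in all of $\mathcal H$, and density of $\bigcup_k\ker D^{(k)}$ gives no control over $\bigcup_k\bigl(\ker D^{(k)}\cap\mathcal N\bigr)$ inside the $T^*$-invariant subspace $\mathcal N=\mathcal H\ominus\mathcal M$; moreover, triviality of the joint kernel of $T^*|_{\mathcal N}$ does not propagate to the joint kernels of $(T_1^{*k},\ldots,T_d^{*k})$ restricted to $\mathcal N$. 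What actually closes the argument, and what the paper does, is to run the induction on the containment of the full slices $M_\alpha:=\{\oplus_\beta x_\beta: x_\beta=0 \mbox{ for }\beta\neq\alpha\}$ in $\mathcal K:=\bigvee_{\alpha}T^\alpha(\ker T^*)$, rather than on the vanishing of the components of a fixed orthogonal vector. The key is the elementary identity, inside each $H_\alpha$,
\[
\Bigl(\sum_{j=1}^d \overline{\ran\, A^{(j)}_{\alpha-\varepsilon_j}}\Bigr)^{\perp}=\bigcap_{j=1}^d \ker A^{(j)*}_{\alpha-\varepsilon_j},
\]
combined with \eqref{kerT*-eq}: once $M_{\alpha-\varepsilon_j}\subseteq\mathcal K$ for every $j$, the closedness and $T_j$-invariance of $\mathcal K$ put the slice vectors with component in $\overline{\ran\, A^{(j)}_{\alpha-\varepsilon_j}}$ into $\mathcal K$, while the complementary piece $\bigcap_j\ker A^{(j)*}_{\alpha-\varepsilon_j}$ already lies in $\ker T^*\subseteq\mathcal K$, so the entire slice $M_\alpha$ lies in $\mathcal K$. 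Note that this slice-containment statement neither uses nor follows from your inductive hypothesis $x_\gamma=0$ for $|\gamma|<n$; your component-wise induction reduces to proving it anyway, so the induction should be reorganized around the slices from the start.
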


\begin{proof}
Let $j \in \{1, \ldots, d\}$. For each $k \Ge 1$, consider the subspace $M^{(j)}_k$ of $\mathcal H$ given by
\beqn
M^{(j)}_k = \big\{\oplus_{\alpha \in \mathbb N^d} x_\alpha : x_\alpha = 0 \mbox{ for }\alpha_j \Le k-1\big\}
\eeqn
and note that $\ran\, T^k_j \subseteq M^{(j)}_k$. Hence
\beqn
\bigcap_{k \Ge 1} \ran\, T^k_j \subseteq \bigcap_{k \Ge 1} M^{(j)}_k.
\eeqn
Now suppose that $x = \oplus_{\alpha \in \mathbb N^d} x_\alpha \in \displaystyle\bigcap_{k \Ge 1} M^{(j)}_k$. Then for every $\alpha \in \mathbb N^d$, $x_\alpha = 0$ as there is a $k \in \mathbb N$ bigger than $ \alpha_j$ such that $x \in M_k^{(j)}$. Hence $x=0$. Thus
\beqn
\bigcap_{k \Ge 1} \ran\, T^k_j \subseteq \bigcap_{k \Ge 1} M^{(j)}_k = \{0\}.
\eeqn
This completes the proof of (i).

To see (ii), let $\mathcal K = \bigvee_{\alpha \in \mathbb N^d} T^\alpha(\ker T^*)$. To show $\mathcal K = \mathcal H$, it is sufficient to show that $M_\alpha \subseteq \mathcal K$ for every $\alpha \in \mathbb N^d$, where
$$M_\alpha = \big\{\oplus_{\beta \in \mathbb N^d}x_\beta \in \mathcal H : x_\beta = 0 \mbox{ if } \beta \neq \alpha \big\}.$$
We prove by mathematical induction on $k \in \mathbb N$ that if $|\alpha| = k$, 
then $M_\alpha \subseteq \mathcal K$. 
Note that $\ker T^* \subseteq \mathcal K$ and hence it follows from \eqref{kerT*-eq} that 
$M_0 \subseteq \mathcal K$. Thus the induction hypothesis holds for $k = 0$. 
Suppose that it is true for some $k \in \mathbb N$. Let $\alpha \in \mathbb N^d$ be such that $|\alpha| = k+1$. 
Then by induction hypothesis $M_{\alpha-\varepsilon_j} \subseteq \mathcal K$ for all $j \in \{1, \ldots, d\}$. Since $\mathcal K$ is closed and $T_j$-invariant, it follows that
$$\overline{T_j M_{\alpha-\varepsilon_j}} = \Big\{\oplus_{\beta \in \mathbb N^d}x_\beta : x_\beta = 0 \mbox{ if } \beta \neq \alpha \mbox{ and } x_\alpha \in \overline{\ran\, A^{(j)}_{\alpha-\varepsilon_j}}\Big\}\subseteq \mathcal K.$$
Thus
\beq \label{K-1}
\Big\{\oplus_{\beta \in \mathbb N^d}x_\beta : x_\beta = 0 \mbox{ if } \beta \neq \alpha \mbox{ and } x_\alpha \in \sum_{j=1}^d\overline{\ran\, A^{(j)}_{\alpha-\varepsilon_j}}\Big\}\subseteq \mathcal K.
\eeq 
Further, from \eqref{kerT*-eq}, it follows that
\beq \label{K-2}
\Big\{\oplus_{\beta \in \mathbb N^d}x_\beta : x_\beta = 0 \mbox{ if } \beta \neq \alpha \mbox{ and } x_\alpha \in \bigcap_{j=1}^d \ker A^{(j)*}_{\alpha-\varepsilon_j}\Big\}\subseteq \ker T^* \subseteq \mathcal K.
\eeq
Combining \eqref{K-1} and \eqref{K-2}, we get that $M_\alpha  \subseteq \mathcal K.$
This completes the proof.
\end{proof}

Some immediate consequences of the preceding theorem are in order.

\begin{corollary}\label{T-analytic}
Let $T = (T_1, \ldots, T_d)$ be a commuting operator-valued multishift on $\mathcal H = \oplus_{\alpha \in \mathbb N^d} H_\alpha$ with operator weights $\{A^{(j)}_{\alpha}: \alpha \in \mathbb N^d,\  j=1, \ldots, d\}$. Then the following statements hold:
\begin{enumerate}
\item[(i)] $T$ is analytic.
\item[(ii)] The point spectrum $\sigma_p(T_j)$ of $T_j$ is contained in $\{0\}$ for each $j=1, \ldots, d$.
\item[(iii)] The joint point spectrum $\sigma_p(T)$ of $T$ is contained in $\{0\}$.
\item[(iv)] For each $j \in \{1, \ldots, d\}$,
\beqn
\bigvee_{k \in \mathbb N} \ker T^{*k}_j = \mathcal H = \bigvee_{\alpha \in \mathbb N^d} \ker T^{*\alpha}.
\eeqn
\item[(v)] For each $j \in \{1, \ldots, d\}$, the spectrum of $T_j$ is the closed disc of radius $r(T_j)$ centered at the origin, where $r(S)$ denotes the spectral radius of a bounded linear operator $S$.
\end{enumerate}
\end{corollary}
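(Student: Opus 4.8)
The statement to prove is Corollary \ref{T-analytic}, whose five parts are all quick deductions from Theorem \ref{ana-wand}, Corollary \ref{same-spectrum}, and standard spectral theory. I would organize the proof so that (i) and (iv) come first (they are almost immediate from the theorem), then (ii)--(iii) from analyticity, then (v) from (ii) together with the polycircular symmetry already recorded in Corollary \ref{same-spectrum}.

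For (i), I would observe that analyticity of the $d$-tuple, $\bigcap_{\alpha\in\mathbb N^d}T^\alpha(\mathcal H)=\{0\}$, follows from separate analyticity: for any $\alpha$ with $\alpha_j\ge 1$ we have $T^\alpha(\mathcal H)\subseteq T_j^{\alpha_j}(\mathcal H)$, and since by Theorem \ref{ana-wand}(i) $\bigcap_{k\ge 1}T_j^k(\mathcal H)=\{0\}$, intersecting over all $\alpha$ already kills everything; alternatively one can invoke the subspaces $M_k^{(j)}$ from the proof of the theorem directly. For (iv), the wandering subspace property $\bigvee_{\alpha}T^\alpha(\ker T^*)=\mathcal H$ from Theorem \ref{ana-wand}(ii), combined with Proposition \ref{kerT*} (which shows $\ker T^*\supseteq H_0$ and more generally that $\ker T^*=\bigoplus_\alpha\ker A^*_\alpha$), and with Corollary \ref{dense-ker} (which shows $\bigcup_k\ker D^{(k)}$ is dense, hence in particular $\bigvee_k\ker T_j^{*k}$ contains each $M_\alpha$ and so equals $\mathcal H$), gives both equalities; the identity $\bigvee_\alpha\ker T^{*\alpha}=\mathcal H$ follows since each $\ker T_j^{*k}\subseteq\ker T^{*k\varepsilon_j}$.

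For (ii) and (iii): if $T_j x=\mu x$ with $x\neq 0$ and $\mu\neq 0$, then $x=\mu^{-k}T_j^k x\in\ran\,T_j^k$ for every $k$, contradicting analyticity of $T_j$ from Theorem \ref{ana-wand}(i); hence $\sigma_p(T_j)\subseteq\{0\}$, which gives (ii), and (iii) follows since $\sigma_p(T)\subseteq\sigma_p(T_j)$ for each $j$ (a joint eigenvector is in particular an eigenvector of $T_1$). For (v), I would argue: by (ii) the only possible eigenvalue of $T_j$ is $0$; but the spectrum $\sigma(T_j)$ of a single operator is a nonempty compact set, and the circularity of $T$ (Proposition \ref{circular}, or more directly the unitary $U_\lambda$ constructed there with $U_\lambda^*T_jU_\lambda=\lambda_j T_j$, taking $\lambda_i=1$ for $i\neq j$) shows $\sigma(T_j)$ is invariant under rotations $z\mapsto e^{i\theta}z$; a rotation-invariant nonempty compact subset of $\mathbb C$ whose complement is... — here I must be careful, because rotation invariance alone does not force a disc (an annulus is rotation-invariant). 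The right extra ingredient is connectedness: by Proposition \ref{connected} the Taylor spectrum of $T$ is connected, and projecting onto the $j$-th coordinate (the projection of the Taylor spectrum onto the $j$-th coordinate is $\sigma(T_j)$, by the projection property of the Taylor spectrum) yields that $\sigma(T_j)$ is connected and contains $0$ (Remark \ref{point-spectrum-adjoint} gives $0\in\sigma(T^*)$, and by Corollary \ref{same-spectrum} $0\in\sigma(T)$, hence $0\in\sigma(T_j)$). A compact, connected, rotation-invariant subset of $\mathbb C$ containing $0$ and with outer radius $r:=\sup\{|z|:z\in\sigma(T_j)\}=r(T_j)$ must be the whole closed disc $\overline{\mathbb D}(0,r)$: for each $0\le s\le r$ pick $z$ with $|z|=r$ in the spectrum if $s=r$, otherwise connectedness together with rotation invariance forces every circle of radius $\le r$ to meet the spectrum, and then rotation invariance fills out that circle; so $\sigma(T_j)=\overline{\mathbb D}(0,r(T_j))$, which is (v).

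The main obstacle is the geometric argument in (v): assembling exactly the right combination of \emph{connectedness} (from Proposition \ref{connected} via the projection property of the Taylor spectrum onto one coordinate, which identifies that projection with $\sigma(T_j)$), \emph{rotation invariance} (from circularity, Proposition \ref{circular}), and \emph{containment of $0$} (from Remark \ref{point-spectrum-adjoint} and Corollary \ref{same-spectrum}) to conclude that a priori-just-compact set $\sigma(T_j)$ is a closed disc; everything else — parts (i), (ii), (iii), (iv) — is a short, essentially formal consequence of Theorem \ref{ana-wand} and the earlier corollaries.
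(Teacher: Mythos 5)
Your proof is correct, and parts (i)--(iii) coincide with the paper's argument (analyticity of the tuple from separate analyticity of each $T_j$; a non-zero eigenvalue would put the eigenvector in $\bigcap_k \ran T_j^k$; the joint point spectrum sits inside each $\sigma_p(T_j)$). The differences are in (iv) and, more substantially, (v). For (iv) the paper simply takes orthogonal complements in $\bigcap_{k}\ran T_j^k=\{0\}=\bigcap_{\alpha}\ran T^\alpha$, whereas you go through Corollary \ref{dense-ker}; both work, and your route has the small advantage of sidestepping any fuss about closures of ranges, since $\ker D^{(k)}\subseteq\ker T_j^{*k}=\ker T^{*k\varepsilon_j}$ and the union of the $\ker D^{(k)}$ is already dense. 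For (v) the paper does \emph{not} use the projection property of the Taylor spectrum: it re-runs the Shilov idempotent argument of Proposition \ref{connected} for the single operator $T_j$, using the density statement (iv) (i.e.\ $\bigvee_k\ker T_j^{*k}=\mathcal H$) to force the Riesz decomposition to collapse, and thereby gets connectedness of $\sigma(T_j)$ directly. You instead obtain connectedness of $\sigma(T_j)$ as the continuous image $\pi_j(\sigma(T))$ of the connected set $\sigma(T)$, invoking the projection property $\sigma(T_j)=\pi_j(\sigma(T))$ of the Taylor spectrum (which is legitimate and available in the cited survey of Curto). Your route is shorter but leans on a deeper black box; the paper's route is self-contained modulo the Shilov idempotent theorem it has already used, and it explains why (iv) is listed before (v). Your final geometric step is also right, and you correctly flag the point the paper leaves implicit: rotation invariance alone allows an annulus, so one genuinely needs connectedness together with $0\in\sigma(T_j)$ (which you get from Remark \ref{point-spectrum-adjoint} and Corollary \ref{same-spectrum}, or directly from $M_0\subseteq\ker T_j^*$) to conclude that the set of moduli is the full interval $[0,r(T_j)]$ and hence that $\sigma(T_j)$ is the closed disc.
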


\begin{proof}
The proof of (i) follows from the preceding proposition and the fact that $\bigcap_{\alpha \in \mathbb N^d} \ran\, T^\alpha \subseteq \bigcap_{k \in \mathbb N} \ran\, T^k_j$ for all $j = 1, \ldots, d$.

To see (ii), fix $j \in \{1, \ldots, d\}$ and let $w \in \mathbb C$ be a non-zero eigenvalue of $T_j$. Then there exists a non-zero vector $x = \oplus_{\alpha \in \mathbb N^d}x_\alpha \in \mathcal H$ such that $T_j x = w x$. Hence $x \in \ran\, T^k_j$ for all $k \Ge 1$. This contradicts the Theorem \ref{ana-wand}(i) and thus we conclude that $\sigma_p(T_j) \subseteq \{0\}$. The proof of (iii) follows from (ii).

Fix $j \in \{1, \ldots, d\}$. Then from Theorem \ref{ana-wand}(i) and (i), it follows that
\beqn
\bigcap_{k \in \mathbb N} \ran\, T^k_j = \{0\} = \bigcap_{\alpha \in \mathbb N^d} \ran\, T^\alpha.
\eeqn
The desired conclusion now follows from taking the orthogonal complement on both sides of the above expression. This completes the verification of (iv).

Fix $j \in \{1, \ldots, d\}$. It follows from Proposition \ref{circular} that $\sigma(T_j)$ has circular symmetry. Further, proceeding along the lines of the proof of Proposition \ref{connected} and using (iv), one may deduce that $\sigma(T_j)$ is connected. Thus the spectrum of $T_j$ must be the closed disc of radius $r(T_j)$ centered at the origin. This verifies (v) and hence, completes the proof of the corollary.
\end{proof}

From Theorem \ref{ana-wand}(ii) and Proposition \ref{directed-tree}, we get the following corollary which extends \cite[Theorem 4.0.1]{CPT} into the settings of directed Cartesian product of finitely many rooted directed trees not necessarily locally finite.

\begin{corollary}
Let $\mathscr T = (V,\mathcal E)$ be the directed Cartesian product of rooted directed trees $\mathscr T_1, \ldots, \mathscr T_d$ and let $S_{\lambdab} = (S_1, \ldots, S_d)$ be a commuting multishift on $\mathscr T$ with weights $\big\{\lambda^{(j)}_v : v \in V\setminus \{\mathsf{root}\},\ j = 1, \ldots, d \big\}$. Then $S_{\lambdab}$ has wandering subspace property.
\end{corollary}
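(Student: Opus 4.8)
The plan is to combine Proposition~\ref{directed-tree} with Theorem~\ref{ana-wand}(ii), the only additional observation being that the wandering subspace property is invariant under unitary equivalence of commuting $d$-tuples.

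First I would invoke Proposition~\ref{directed-tree} to obtain a unitary $U : \ell^2(V) \rar \mathcal H$, where $\mathcal H = \oplus_{\alpha \in \mathbb N^d} H_\alpha$, together with a commuting operator-valued multishift $T = (T_1, \ldots, T_d)$ on $\mathcal H$ such that $T_j U = U S_j$ for every $j \in \{1, \ldots, d\}$. Taking Hilbert space adjoints in this intertwining relation gives $T_j^* U = U S_j^*$, so for each $j$ the unitary $U$ carries $\ker S_j^*$ onto $\ker T_j^*$; intersecting over $j$ yields $U(\ker S_{\lambdab}^*) = \ker T^*$. Iterating $T_j U = U S_j$ shows $T^\alpha U = U S_{\lambdab}^\alpha$ for every $\alpha \in \mathbb N^d$, whence
\[
U\big(S_{\lambdab}^\alpha(\ker S_{\lambdab}^*)\big) = T^\alpha U(\ker S_{\lambdab}^*) = T^\alpha(\ker T^*), \qquad \alpha \in \mathbb N^d.
\]

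Next, since $U$ is a surjective isometry it maps closed linear spans to closed linear spans, so
\[
U\Big(\bigvee_{\alpha \in \mathbb N^d} S_{\lambdab}^\alpha(\ker S_{\lambdab}^*)\Big) = \bigvee_{\alpha \in \mathbb N^d} T^\alpha(\ker T^*).
\]
By Theorem~\ref{ana-wand}(ii) applied to the commuting operator-valued multishift $T$, the right-hand side is all of $\mathcal H$. Applying $U^*$ and using that $U$ is unitary gives $\bigvee_{\alpha \in \mathbb N^d} S_{\lambdab}^\alpha(\ker S_{\lambdab}^*) = \ell^2(V)$, which is exactly the wandering subspace property for $S_{\lambdab}$.

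I do not anticipate any genuine obstacle: the argument is a routine transfer of structure along a unitary. The only points that require (minor) care are that the intertwining relation $T_jU = US_j$ passes cleanly to the adjoints and hence to the joint kernels, and that a unitary preserves closed linear spans. One could equally phrase the whole thing abstractly as: unitary equivalence of commuting $d$-tuples preserves the joint cokernel (up to the implementing unitary) and every power $(\cdot)^\alpha$, hence preserves the subspace $\bigvee_{\alpha}(\cdot)^\alpha(\ker(\cdot)^*)$ and, in particular, the property that it coincides with the underlying space.
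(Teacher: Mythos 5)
Your argument is correct and is exactly the route the paper intends: it states this corollary as an immediate consequence of Proposition~\ref{directed-tree} and Theorem~\ref{ana-wand}(ii), leaving implicit precisely the routine transfer along the intertwining unitary that you spell out. No issues.
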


The following corollary gives a sufficient condition under which a commuting operator-valued multishift with non-invertible operator weights is unitarily equivalent to a commuting operator-valued multishift with invertible operator weights. It is an extended version of \cite[Corollary 4.1.12]{CPT} and its proof goes along the lines of that of \cite[Corollary 4.1.12]{CPT}. However, we include the details for the sake of completeness. 

\begin{corollary}
Let $T = (T_1, \ldots, T_d)$ be a toral left invertible commuting operator-valued multishift on $\mathcal H = \oplus_{\alpha \in \mathbb N^d} H_\alpha$ with operator weights $\{A^{(j)}_{\alpha}: \alpha \in \mathbb N^d,\  j=1, \ldots, d\}$. Suppose that $T^\alpha(\ker T^*)$ is orthogonal to $T^\beta(\ker T^*)$  whenever $\alpha \neq \beta$, $\alpha, \beta \in \mathbb N^d$. Then $T$ is unitarily equivalent to a commuting operator-valued multishift $\tilde{T} = (\tilde{T}_1, \ldots, \tilde{T}_d)$ on $\ell^2_{\ker T^*} (\mathbb N^d)$ with some invertible operator weights $\{\tilde{A}^{(j)}_{\alpha} \in \mathcal B(\ker T^*): \alpha \in \mathbb N^d,\  j=1, \ldots, d\}$.
\end{corollary}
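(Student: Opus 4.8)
The plan is to build the wandering subspace $\ker T^*$ into the "zeroth slots" of a new orthogonal decomposition of $\mathcal H$ and to use the wandering subspace property from Theorem \ref{ana-wand}(ii) together with the orthogonality hypothesis to show that $\mathcal H$ is the orthogonal direct sum $\bigoplus_{\alpha \in \mathbb N^d} T^\alpha(\ker T^*)$. First I would note that since $T$ is toral left invertible, each $T_j$ is bounded below, hence $T^\alpha$ restricted to $\ker T^*$ is bounded below for every $\alpha$, so $T^\alpha(\ker T^*)$ is a closed subspace and $T^\alpha|_{\ker T^*} : \ker T^* \to T^\alpha(\ker T^*)$ is invertible onto its range. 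Combining the orthogonality hypothesis with Theorem \ref{ana-wand}(ii) gives the internal orthogonal direct sum decomposition $\mathcal H = \bigoplus_{\alpha \in \mathbb N^d} T^\alpha(\ker T^*)$ (the closed span of mutually orthogonal closed subspaces whose union is dense is the whole space).

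Next I would set $E := \ker T^*$ and, for each $\alpha \in \mathbb N^d$, define $V_\alpha := T^\alpha|_E : E \to T^\alpha(E)$, which is a bounded invertible operator onto the closed subspace $T^\alpha(E)$ of $\mathcal H$. The operator $U : \ell^2_E(\mathbb N^d) \to \mathcal H$ given by $U(\oplus_{\alpha} h_\alpha) = \oplus_{\alpha} V_\alpha h_\alpha$ is then a bounded linear bijection intertwining the coordinate shift structure on $\ell^2_E(\mathbb N^d)$ — but it need not be unitary, since the $V_\alpha$ are merely invertible, not unitary. To fix this I would perform polar-type corrections: write $V_\alpha = W_\alpha P_\alpha$ where $P_\alpha = (V_\alpha^* V_\alpha)^{1/2} \in \mathcal G(E)$ and $W_\alpha : E \to T^\alpha(E)$ is unitary onto the range, and then define $\tilde U(\oplus_\alpha h_\alpha) := \oplus_\alpha W_\alpha h_\alpha$, which is unitary from $\ell^2_E(\mathbb N^d)$ onto $\mathcal H$ by the orthogonal direct sum decomposition above. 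Conjugating: $\tilde T_j := \tilde U^* T_j \tilde U$. One checks that $\tilde T_j$ maps the $\alpha$-th slot into the $(\alpha+\varepsilon_j)$-th slot, so $\tilde T = (\tilde T_1, \ldots, \tilde T_d)$ is an operator-valued multishift on $\ell^2_E(\mathbb N^d)$ with weights $\tilde A^{(j)}_\alpha = W_{\alpha+\varepsilon_j}^* \, (T_j|_{T^\alpha(E)}) \, W_\alpha \in \mathcal B(E)$; it is commuting because $T$ is. Since $T^\alpha|_E$ and $T^{\alpha+\varepsilon_j}|_E$ are invertible onto their ranges and $T_j$ maps $T^\alpha(E)$ isomorphically onto $T^{\alpha+\varepsilon_j}(E)$ (as $T_j$ is bounded below and $T_j T^\alpha(E) = T^{\alpha+\varepsilon_j}(E)$), the weight $\tilde A^{(j)}_\alpha$ is a composition of invertible maps, hence invertible in $\mathcal B(E)$.

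The main obstacle I anticipate is verifying carefully that $T_j$ restricts to an \emph{invertible} operator from $T^\alpha(\ker T^*)$ onto $T^{\alpha+\varepsilon_j}(\ker T^*)$: surjectivity is immediate from the definition $T^{\alpha+\varepsilon_j}(E) = T_j(T^\alpha(E))$, and injectivity follows since $T_j$ is bounded below (toral left invertibility), but one must also confirm the range $T^{\alpha+\varepsilon_j}(E)$ is exactly the image and that boundedness of the inverse is controlled uniformly enough to conclude $\tilde A^{(j)}_\alpha \in \mathcal B(\ker T^*)$ — this last point uses that $T_j$ bounded below gives $\|T_j x\| \ge c\|x\|$ with $c$ independent of $\alpha$. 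A secondary bookkeeping point is to check the slot-shifting behaviour of $\tilde T_j$ genuinely holds, i.e. that the polar factors $W_\alpha$ do not spoil the multishift form; this is a routine computation once the decomposition $\mathcal H = \bigoplus_\alpha T^\alpha(\ker T^*)$ and the invertibility of $T_j|_{T^\alpha(\ker T^*)}$ are in hand.
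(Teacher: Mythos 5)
Your proposal is correct and follows essentially the same route as the paper: both establish the orthogonal decomposition $\mathcal H = \oplus_{\alpha \in \mathbb N^d} T^\alpha(\ker T^*)$ from Theorem \ref{ana-wand}(ii) and the orthogonality hypothesis, choose a unitary identification of each slot $T^\alpha(\ker T^*)$ with $\ker T^*$, and read off the weights as $T_j|_{T^\alpha(\ker T^*)}$ conjugated by these unitaries. The only (immaterial) difference is that you produce the slot unitaries canonically via the polar decomposition of $T^\alpha|_{\ker T^*}$, whereas the paper simply picks arbitrary unitaries using the equality of dimensions forced by toral left invertibility.
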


\begin{proof}
As the spaces $T^\alpha(\ker T^*)$, $\alpha \in \mathbb N^d$, are mutually orthogonal, it follows from Theorem \ref{ana-wand}(ii) that
$\mathcal H = \oplus_{\alpha \in \mathbb N^d} T^\alpha(\ker T^*).$
Since $T$ is toral left invertible, it follows that for each $\alpha \in \mathbb N^d$, $\dim T^\alpha(\ker T^*) = \dim \ker T^*$ and $T^\alpha(\ker T^*)$ is a closed subspace of $\mathcal H$. Let $U_\alpha$ be a unitary from $T^\alpha(\ker T^*)$ onto $\ker T^*$. Now consider the operator $U : \mathcal H \rar \ell^2_{\ker T^*} (\mathbb N^d)$ given by
\beqn
U(\oplus_{\alpha \in \mathbb N^d} x_\alpha) = \oplus_{\alpha \in \mathbb N^d} U_\alpha x_\alpha, \quad x_\alpha \in T^\alpha(\ker T^*).
\eeqn
Then it is easy to see that $U$ is a unitary operator. For each $\alpha \in \mathbb N^d$ and $j \in \{1, \ldots, d\}$, let $\tilde{A}^{(j)}_{\alpha} := U_{\alpha+\varepsilon_j} T_j|_{T^\alpha(\ker T^*)} U_\alpha^*$. Clearly, each $\tilde{A}^{(j)}_{\alpha}$ is an invertible bounded linear operator on $\ker T^*$. Consider the operator-valued multishift $\tilde{T} = (\tilde{T}_1, \ldots, \tilde{T}_d)$ on $\ell^2_{\ker T^*} (\mathbb N^d)$ with operator weights $\{\tilde{A}^{(j)}_{\alpha} : \alpha \in \mathbb N^d,\  j=1, \ldots, d\}$.  Then for $\oplus_{\alpha \in \mathbb N^d} x_\alpha \in \mathcal H$, $x_\alpha \in T^\alpha(\ker T^*)$ and $j=1, \ldots, d$, we get
\beqn
UT_j(\oplus_{\alpha \in \mathbb N^d} x_\alpha) &=& U(\oplus_{\alpha \in \mathbb N^d} T_j x_\alpha) = U\big(\oplus_{\alpha \in \mathbb N^d} T_j|_{T^\alpha(\ker T^*)} x_\alpha\big)\\
&=& U\big(\oplus_{\alpha \in \mathbb N^d} T_j|_{T^{\alpha-\varepsilon_j}(\ker T^*)} x_{\alpha-\varepsilon_j}\big)\\
&=& \oplus_{\alpha \in \mathbb N^d} U_\alpha T_j|_{T^{\alpha-\varepsilon_j}(\ker T^*)} x_{\alpha-\varepsilon_j}\\
&=& \oplus_{\alpha \in \mathbb N^d} \tilde{A}^{(j)}_{\alpha-\varepsilon_j} U_{\alpha-\varepsilon_j} x_{\alpha-\varepsilon_j} = \tilde{T}_j U(\oplus_{\alpha \in \mathbb N^d} x_\alpha).
\eeqn
This completes the proof.
\end{proof}

\section{Operator-valued Multishifts with invertible operator weights}
This section is devoted to the study of function theory of operator-valued multishifts on $\ell^2_H(\mathbb N^d)$ with invertible operator weights, where $H$ is a complex separable HIlbert space. We begin with the observation that any such operator-valued multishift can be looked upon as a tuple of
operators of multiplication by the coordinate functions on a Hilbert space of vector-valued formal power series. In this
 regard, we recall some preliminaries about the Hilbert space of vector-valued formal power series.

Let $H$ be a complex separable Hilbert space. By an $H$-valued formal power series we mean the series
$\sum_{\alpha \in \mathbb N^d} x_\alpha z^\alpha$, $x_\alpha \in H$, without regard to convergence
at any point $z \in \mathbb C^d$. Fix a multisequence $\mathscr B = \{W_\alpha \in \mathcal G(H) : \alpha \in \mathbb N^d\}$. Then 
\beqn
\mathcal H^2(\mathscr B) := \Big\{\sum_{\alpha \in \mathbb N^d} x_\alpha z^\alpha : \sum_{\alpha \in \mathbb N^d} \|W_\alpha x_\alpha\|^2 < \infty\Big\}
\eeqn
 is a complex Hilbert space endowed with the following inner product:
  For $f(z) = \sum_{\alpha \in \mathbb N^d} x_\alpha z^\alpha$ and $g(z) = \sum_{\alpha \in \mathbb N^d} y_\alpha z^\alpha$ in $\mathcal H^2(\mathscr B)$,
 \beqn
 \inp{f(z)}{g(z)}_{\!_{\mathcal H^2(\mathscr B)}} := \sum_{\alpha \in \mathbb N^d}\inp{W_\alpha x_\alpha}{W_\alpha y_\alpha}_{\!_H}.
 \eeqn
We refer to $\mathcal H^2(\mathscr B)$ as a Hilbert space of $H$-valued formal power series. Note that  $\|xz^\alpha\|_{\mathcal H^2(\mathscr B)} = \|W_\alpha x\|_H$ for all $x \in H$ and $\alpha \in \mathbb N^d$.

Let $\mathscr M_z = (\mathscr M_{z_1}, \ldots, \mathscr M_{z_d})$ denote the $d$-tuple of operators of
 multiplication by the coordinate functions $z_j$
 defined on the subspace $\Big \{\displaystyle \sum_{\underset{|\alpha| \leqslant n}{\alpha \in \mathbb N^d}} x_\alpha z^\alpha : x_\alpha \in H,\ n \in \mathbb N \Big\}$ of polynomials in $\mathcal H^2(\mathscr B)$ by
\beqn
\mathscr M_{z_j} \Big(\sum_{\underset{|\alpha| \leqslant n}{\alpha \in \mathbb N^d}} x_\alpha z^\alpha \Big) = \sum_{\underset{|\alpha| \leqslant n}{\alpha \in \mathbb N^d}} x_\alpha z^{\alpha+\varepsilon_j},  \quad j=1, \ldots, d.
\eeqn
Thus $\mathscr M_{z_j}$ are densely defined linear operators on $\mathcal H^2(\mathscr B)$ which may not be bounded in general. The following proposition shows that an operator-valued multishift with invertible operator weights can be recognized as the $d$-tuple of operators of multiplication by the coordinate functions on some Hilbert space of formal power series.

\begin{theorem}\label{H2-beta-model}
Let $H$ be a complex separable Hilbert space and $T = (T_1, \ldots, T_d)$ be a commuting
operator-valued multishift on $\ell^2_H(\mathbb N^d)$ with invertible operator weights
$\{A^{(j)}_{\alpha} : \alpha \in \mathbb N^d,\  j=1, \ldots, d\}$.
Let $\mathscr B = \{B_\alpha \in \mathcal G(H) : \alpha \in \mathbb N^d\}$,
where $B_\alpha$ is as defined in Proposition \ref{moments} $($see note after the statement of Proposition \ref{moments}$)$. Then for each $j \in \{1, \ldots, d\}$, the operator $\mathscr M_{z_j}$ of multiplication by the coordinate function $z_j$ on $\mathcal H^2(\mathscr B)$ is bounded. Further, there exists a
 unitary $U:\ell^2_H(\mathbb N^d) \rar \mathcal H^2(\mathscr B)$ such that
 $\mathscr M_{z_j} U = U T_j$ for all $j = 1, \ldots, d$. Moreover, the following statements hold:
\begin{enumerate}
\item[(i)]  For all $\displaystyle\sum_{\alpha \in \mathbb N^d} x_\alpha z^\alpha \in \mathcal H^2(\mathscr B)$,
$$\displaystyle\mathscr M^*_{z_j} \Big(\sum_{\alpha \in \mathbb N^d} x_\alpha z^\alpha \Big) = \sum_{\alpha \in \mathbb N^d} B_\alpha^{-1} A_\alpha^{(j)*}
 B_{\alpha+\varepsilon_j} x_{\alpha+\varepsilon_j} z^\alpha.$$
 
  \item[(ii)] A point $w \in \sigma_p(\mathscr M_z^*)$ if and only if there exists a non-zero
 vector $x$ in $H$ such that $\displaystyle \sum_{\alpha \in \mathbb N^d} |w^\alpha|^2 \|B_\alpha^{*-1}x\|^2 < \infty$.
\end{enumerate}
\end{theorem}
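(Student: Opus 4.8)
The plan is to establish the three assertions in order, since each rests on the previous one. First I would exhibit the unitary $U$. The natural candidate is the map sending $\oplus_{\alpha \in \mathbb N^d} x_\alpha \in \ell^2_H(\mathbb N^d)$ to the formal power series $\sum_{\alpha \in \mathbb N^d}(B_\alpha^{-1}x_\alpha)\,z^\alpha$; equivalently $U^*$ sends $\sum_\alpha y_\alpha z^\alpha$ to $\oplus_\alpha B_\alpha y_\alpha$. Since $\|y z^\alpha\|_{\mathcal H^2(\mathscr B)} = \|B_\alpha y\|_H$ and the $B_\alpha$ are invertible, this is a bijective isometry on the dense set of finitely supported sequences, hence extends to a unitary. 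To see $\mathscr M_{z_j}U = UT_j$, apply both sides to $\oplus_\alpha x_\alpha$: on the left one gets $\sum_\alpha (B_\alpha^{-1}x_\alpha) z^{\alpha+\varepsilon_j}$, while on the right $T_j$ produces $\oplus_\alpha A^{(j)}_{\alpha-\varepsilon_j}x_{\alpha-\varepsilon_j}$, and $U$ then multiplies the $\alpha$-coefficient by $B_\alpha^{-1}$, giving $\sum_\alpha B_\alpha^{-1}A^{(j)}_{\alpha-\varepsilon_j}x_{\alpha-\varepsilon_j}\,z^\alpha = \sum_\alpha B_{\alpha+\varepsilon_j}^{-1}A^{(j)}_\alpha x_\alpha\,z^{\alpha+\varepsilon_j}$. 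These agree precisely because $A^{(j)}_\alpha B_\alpha = B_{\alpha+\varepsilon_j}$, which is exactly Proposition \ref{moments}(ii) specialized to $\beta = \alpha$ (recall $B_\alpha := B(\alpha,\alpha)$ and $A^{(j)}_\alpha B(\alpha,\alpha) = B(\alpha+\varepsilon_j, \alpha+\varepsilon_j)$). Boundedness of $\mathscr M_{z_j}$ is then automatic: $\mathscr M_{z_j} = U T_j U^*$ and $T_j$ is bounded by \eqref{tj-bdd}.

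For (i), I would simply transport the adjoint formula for $T_j^*$ from Proposition \ref{kerT*} through the unitary: $\mathscr M_{z_j}^* = U T_j^* U^*$. Applying this to $\sum_\alpha x_\alpha z^\alpha$, first $U^*$ gives $\oplus_\alpha B_\alpha x_\alpha$, then $T_j^*$ sends this to $\oplus_\alpha A^{(j)*}_\alpha B_{\alpha+\varepsilon_j} x_{\alpha+\varepsilon_j}$ (using the formula $T_j^*(\oplus_\alpha y_\alpha) = \oplus_\alpha A^{(j)*}_\alpha y_{\alpha+\varepsilon_j}$), and finally $U$ multiplies the $\alpha$-coefficient by $B_\alpha^{-1}$, yielding $\sum_\alpha B_\alpha^{-1} A^{(j)*}_\alpha B_{\alpha+\varepsilon_j} x_{\alpha+\varepsilon_j}\,z^\alpha$, as claimed.

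For (ii), I would characterize joint eigenvectors of $\mathscr M_z^*$ directly. Write $g(z) = \sum_\alpha x_\alpha z^\alpha \in \mathcal H^2(\mathscr B)$ nonzero with $\mathscr M_{z_j}^* g = \overline{w_j} g$ for all $j$; using the formula in (i), this reads $B_\alpha^{-1}A^{(j)*}_\alpha B_{\alpha+\varepsilon_j}x_{\alpha+\varepsilon_j} = \overline{w_j}\,x_\alpha$ for every $\alpha$ and every $j$, i.e. $x_{\alpha+\varepsilon_j} = w_j\,(B_{\alpha+\varepsilon_j}^{-1}A^{(j)*}_\alpha B_\alpha)^{-1}\cdots$ — more cleanly, one inverts the relation to propagate $x_\alpha$ outward from $x_0$. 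The key algebraic point, to be extracted from Proposition \ref{moments}, is that $B_{\alpha+\varepsilon_j}^{-1}A^{(j)*}_\alpha B_\alpha$ inverts to $B_\alpha^{-1}(A^{(j)}_\alpha)^{-1}B_{\alpha+\varepsilon_j}$ — wait, more usefully: since $A^{(j)}_\alpha B_\alpha = B_{\alpha+\varepsilon_j}$ we have $B_\alpha^* A^{(j)*}_\alpha = B_{\alpha+\varepsilon_j}^*$, so $(A^{(j)*}_\alpha) = B_{\alpha+\varepsilon_j}^* B_\alpha^{*-1}$, whence $B_\alpha^{-1}A^{(j)*}_\alpha B_{\alpha+\varepsilon_j} = B_\alpha^{-1}B_{\alpha+\varepsilon_j}^* B_\alpha^{*-1} B_{\alpha+\varepsilon_j}$. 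The eigenvalue equations then force, by induction on $|\alpha|$ and using the commuting compatibility of the $A^{(j)}$'s, that $x_\alpha = w^\alpha\, B_\alpha^{-1}B_\alpha^{*-1}(B_0^*B_0\, x_0)$; normalizing $x := B_0^* B_0 x_0 \in H$ (any nonzero $x$ works since $B_0$ is invertible), one gets $\|x_\alpha z^\alpha\|_{\mathcal H^2(\mathscr B)}^2 = \|B_\alpha x_\alpha\|^2 = |w^\alpha|^2 \|B_\alpha^{*-1}x\|^2$. Thus $g \in \mathcal H^2(\mathscr B)$ with $\mathscr M_z^* g = \bar w g$ exists (for suitable $x\neq 0$) if and only if $\sum_\alpha |w^\alpha|^2\|B_\alpha^{*-1}x\|^2 < \infty$. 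Conversely, given such an $x$, defining $x_\alpha := w^\alpha B_\alpha^{-1}B_\alpha^{*-1}x$ produces an actual element of $\mathcal H^2(\mathscr B)$ that one checks is a joint eigenvector. The main obstacle I anticipate is bookkeeping in this last step: one must verify that the $\alpha$-indexed recursion coming from the $d$ separate eigenvalue equations is consistent — that is, that the two ways of reaching $x_{\alpha+\varepsilon_i+\varepsilon_j}$ agree — which is where the commutativity relation \eqref{commuting} (equivalently Proposition \ref{moments}(i)–(ii)) must be invoked carefully; everything else is a transparent transport of structure along $U$.
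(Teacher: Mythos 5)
Your proposal follows essentially the same route as the paper: the same unitary $U(\oplus_\alpha B_\alpha x_\alpha)=\sum_\alpha x_\alpha z^\alpha$, the same intertwining check via $A^{(j)}_\alpha B_\alpha = B_{\alpha+\varepsilon_j}$ from Proposition \ref{moments}(ii), the same transport of $T_j^*$ for (i), and the same coefficient recursion leading to $x_\alpha = w^\alpha (B_\alpha^* B_\alpha)^{-1}x_0$ for (ii). One slip to correct: taking adjoints in $B_\alpha^* A^{(j)*}_\alpha = B_{\alpha+\varepsilon_j}^*$ gives $A^{(j)*}_\alpha = B_\alpha^{*-1}B_{\alpha+\varepsilon_j}^*$, not $B_{\alpha+\varepsilon_j}^* B_\alpha^{*-1}$; with the correct order the recursion coefficient becomes $(B_{\alpha+\varepsilon_j}^*B_{\alpha+\varepsilon_j})^{-1}(B_\alpha^* B_\alpha)$, which is exactly what makes the induction telescope to the closed form you assert (your intermediate expression, taken literally, would not), and the same care should be applied to the stray conjugates on $w_j$, though neither affects the final summability criterion.
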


\begin{proof}
It follows easily from \eqref{tj-bdd} and Proposition \ref{moments}(ii) that $\mathscr M_{z_j}$ is bounded on $\mathcal H^2(\mathscr B)$.
For each $\alpha \in \mathbb N^d$ and $j \in \{1, \ldots, d\}$, $A^{(j)}_{\alpha}$ is invertible so is $B_\alpha$. Thus
$B_\alpha H = H$ and  $\ell^2_H(\mathbb N^d) = \oplus_{\alpha \in \mathbb N^d} B_\alpha H$.
It follows that for each $x \in \ell^2_H(\mathbb N^d)$, there exists a unique sequence
$\{x_\alpha\}_{\alpha \in \mathbb N^d}$ in $H$ such that $x = \oplus_{\alpha \in \mathbb N^d} B_\alpha x_\alpha$.
Define the map $U:\ell^2_H(\mathbb N^d) \rar \mathcal H^2(\mathscr B)$ by
\beqn
U (\oplus_{\alpha \in \mathbb N^d} B_\alpha x_\alpha) = \sum_{\alpha \in \mathbb N^d} x_\alpha z^\alpha.
\eeqn 
It is easy to verify that $U$ is a surjective isometry and hence a unitary. Fix $j \in \{1, \ldots, d\}$. Then for $\oplus_{\alpha \in \mathbb N^d} B_\alpha x_\alpha \in \ell^2_H(\mathbb N^d)$ and using the Proposition \ref{moments}(ii), we have
\beqn
\mathscr M_{z_j} U \big(\oplus_{\alpha \in \mathbb N^d} B_\alpha x_\alpha\big) &=&
\mathscr M_{z_j}\Big(\sum_{\alpha \in \mathbb N^d} x_\alpha z^\alpha\Big) = \sum_{\alpha \in \mathbb N^d} x_\alpha z^{\alpha+\varepsilon_j}\\
&=& \sum_{\alpha \in \mathbb N^d} x_{\alpha-\varepsilon_j} z^{\alpha}
= U\big( \oplus_{\alpha \in \mathbb N^d} B_\alpha x_{\alpha-\varepsilon_j}\big)\\
&=& U\big( \oplus_{\alpha \in \mathbb N^d} A^{(j)}_{\alpha-\varepsilon_j}B_{\alpha-\varepsilon_j} x_{\alpha-\varepsilon_j}\big)\\
&=& U T_j \big(\oplus_{\alpha \in \mathbb N^d} B_\alpha x_\alpha\big).
\eeqn
This completes the proof of the first half of the proposition. For the moreover part,
let $\sum_{\alpha \in \mathbb N^d} x_\alpha z^\alpha \in \mathcal H^2(\mathscr B)$ and note that
\beqn
\mathscr M^*_{z_j}\Big(\sum_{\alpha \in \mathbb N^d} x_\alpha z^\alpha\Big)&=& UT_j^*U^*
 \Big(\sum_{\alpha \in \mathbb N^d} x_\alpha z^\alpha\Big) = UT_j^*(\oplus_{\alpha \in \mathbb N^d} B_\alpha x_\alpha)\\
&=& U \big(\oplus_{\alpha \in \mathbb N^d} A^{(j)*}_{\alpha} B_{\alpha+\varepsilon_j} x_{\alpha+\varepsilon_j} \big)\\
&=& U \big(\oplus_{\alpha \in \mathbb N^d} B_\alpha B_\alpha^{-1} A^{(j)*}_{\alpha} B_{\alpha+\varepsilon_j}  x_{\alpha+\varepsilon_j} \big)\\
&=& \sum_{\alpha \in \mathbb N^d} B_\alpha^{-1} A^{(j)*}_{\alpha} B_{\alpha+\varepsilon_j} x_{\alpha+\varepsilon_j} z^\alpha.
\eeqn
This establishes (i). 

To see (ii), let $w \in \sigma_p(\mathscr M_z^*)$ and $f_w(z) = \sum_{\alpha \in \mathbb N^d} x_\alpha z^\alpha
 \in \mathcal H^2(\mathscr B)$ be an eigenvector corresponding to $w$. Then 
\beq\label{eigen-eq} 
 \mathscr M^*_{z_j} f_w(z) = w_j f_w(z)\ \mbox{ for all } j = 1, \ldots, d.
\eeq 
Using (i) and comparing the coefficients of $z^\alpha$ on both sides of \eqref{eigen-eq}, we get
\beqn
B_\alpha^{-1} A^{(j)*}_{\alpha} B_{\alpha+\varepsilon_j} x_{\alpha+\varepsilon_j}
= w_j x_\alpha \mbox{ for all } \alpha \in \mathbb N^d \mbox{ and } j = 1, \ldots, d.
\eeqn
Consequently, $\mbox{ for all } \alpha \in \mathbb N^d \mbox{ and } j \in \{1, \ldots, d\},$
\beq\label{recurse}
x_{\alpha+\varepsilon_j} = w_j B_{\alpha+\varepsilon_j}^{-1} A^{(j)*-1}_{\alpha} B_\alpha x_\alpha.
\eeq
It is easy to see that for fixed $x_0 \in H$, $x_\alpha$ is well-defined for each $\alpha \in \mathbb N^d$. That is,
 $x_{(\alpha+\varepsilon_j)+\varepsilon_k} = x_{(\alpha+\varepsilon_k)+\varepsilon_j}$ for all
  $\alpha \in \mathbb N^d$ and $j, k = 1, \ldots, d$. By repeated applications of \eqref{recurse} and using Proposition \ref{moments}(ii), we get
\beqn
x_\alpha = w^\alpha (B_\alpha^{*} B_\alpha)^{-1}  x_0 \mbox{ for all } \alpha \in \mathbb N^d,
\eeqn
where $x_0 := f_w(0)$. Thus $f_w(z) = \displaystyle \sum_{\alpha \in \mathbb N^d} w^\alpha (B_\alpha^{*} B_\alpha)^{-1} x_0 z^\alpha$.
As $f_w \in \mathcal H^2(\mathscr B)$, we must have $\displaystyle \sum_{\alpha \in \mathbb N^d} |w^\alpha|^2 \|B_\alpha^{*-1}x_0\|^2 < \infty$. 

Conversely, suppose that $\displaystyle \sum_{\alpha \in \mathbb N^d} |w^\alpha|^2 \|B_\alpha^{*-1}x\|^2 < \infty$ for some $w \in \mathbb C^d$ and a non-zero vector $x \in H$. Define $$g(z) := \displaystyle \sum_{\alpha \in \mathbb N^d} w^\alpha (B_\alpha^{*} B_\alpha)^{-1} x z^\alpha.$$
Then $g$ is a non-zero element of $\mathcal H^2(\mathscr B)$. From (i) and Proposition \ref{moments}(ii), we get for all $j \in \{1, \ldots, d\}$,
\beqn
\mathscr M^*_{z_j} g &=& \sum_{\alpha \in \mathbb N^d} w^{\alpha +\varepsilon_j} B_\alpha^{-1} A^{(j)*}_{\alpha} B_{\alpha+\varepsilon_j} (B^*_{\alpha+\varepsilon_j} B_{\alpha+\varepsilon_j})^{-1} x z^\alpha\\
&=& w_j \sum_{\alpha \in \mathbb N^d} w^{\alpha} B_\alpha^{-1} B_\alpha^{*-1} x z^\alpha = w_j g.
\eeqn
This completes the proof of the proposition.
\end{proof}

\begin{remark} \label{rm5.2}
It can be easily verified that for a given multisequence $\{B_\alpha \in \mathcal G(H) : \alpha \in \mathbb N^d\}$
 of invertible operators on $H$, if the $d$-tuple $\mathscr M_z = (\mathscr M_{z_1}, \ldots, \mathscr M_{z_d})$
 of operators of multiplication by the coordinate functions on $\mathcal H^2(\mathscr B)$ is bounded, then it is
 unitarily equivalent to a commuting operator-valued multishift on $\ell^2_H(\mathbb N^d)$ with invertible
 operator weights $A^{(j)}_\alpha = B_{\alpha+\varepsilon_j} B_\alpha^{-1}, \alpha \in \mathbb N^d \mbox{ and } j = 1, \ldots, d$.
\end{remark}

A natural question arises here is whether the Hilbert space $\mathcal H^2(\mathscr B)$ obtained
 in the foregoing proposition can be realized as a reproducing kernel Hilbert space of holomorphic
 functions. Before proceeding towards an answer of this question, let us recall a definition  from \cite{JL}. For $w \in \mathbb C^d$, let $E_w$ denote the linear map of evaluation at $w$ defined on the polynomials in $\mathcal H^2(\mathscr B)$ onto $H$ by $E_w p = p(w)$. A point $w \in \mathbb C^d$ is called a {\it bounded point evaluation $($bpe$)$} on $\mathcal H^2(\mathscr B)$ if $E_w$ extends to a continuous linear map from $\mathcal H^2(\mathscr B)$ onto $H$. By an abuse of notation, we denote the continuous extension of $E_w$ by $E_w$ itself.

In the following theorem, we determine the set of bounded point evaluations for a commuting operator-valued multishift with invertible operator weights. 

\begin{proposition}\label{rep-ker}
Let $H$ be a complex separable Hilbert space and let $T = (T_1, \ldots, T_d)$ be a
commuting operator-valued multishift on $\ell^2_H(\mathbb N^d)$ with invertible operator
weights $\{A^{(j)}_{\alpha} : \alpha \in \mathbb N^d,\  j=1, \ldots, d\}$.
Let $\mathscr B = \{B_\alpha \in \mathcal G(H) : \alpha \in \mathbb N^d\}$,
where $B_\alpha$ is as defined in Proposition \ref{moments}. Then the set of all bounded point evaluations on $\mathcal H^2(\mathscr B)$ is contained in $\sigma_p(T^*)$ and is given by
\beqn
\Omega := \Big\{w \in \mathbb C^d : \sup_{x \in H, \|x\|=1} \sum_{\alpha \in \mathbb N^d} |w^\alpha|^2 \|B_\alpha^{*-1}x\|^2 < \infty\Big\}.
\eeqn
\end{proposition}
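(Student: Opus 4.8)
The strategy is to translate "bounded point evaluation'' into a condition on the moment operators $B_\alpha$ and then recognize that condition as membership in $\sigma_p(\mathscr M_z^*) = \sigma_p(T^*)$ via Theorem \ref{H2-beta-model}(ii). First I would fix $w \in \mathbb C^d$ and recall that a polynomial $p(z) = \sum_{|\alpha| \le n} x_\alpha z^\alpha$ in $\mathcal H^2(\mathscr B)$ satisfies $E_w p = p(w) = \sum_{|\alpha|\le n} w^\alpha x_\alpha$, while $\|p\|^2_{\mathcal H^2(\mathscr B)} = \sum_{|\alpha|\le n}\|B_\alpha x_\alpha\|^2$. The key elementary computation is to identify, for the functional $x \mapsto \inp{E_w p}{x}_H$, the norm of $E_w$ on the subspace of polynomials of degree $\le n$. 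Writing $y_\alpha := B_\alpha x_\alpha$ so that $x_\alpha = B_\alpha^{-1} y_\alpha$, one has $p(w) = \sum_{|\alpha|\le n} w^\alpha B_\alpha^{-1} y_\alpha$ with the $y_\alpha$ ranging freely over $H$ subject to $\sum \|y_\alpha\|^2 \le 1$; hence for each unit $x \in H$,
\[
\sup_{\|p\|\le 1, \deg p \le n} |\inp{p(w)}{x}_H|^2 = \sum_{|\alpha|\le n} |w^\alpha|^2 \|B_\alpha^{*-1} x\|^2,
\]
by the Cauchy--Schwarz argument in $\bigoplus_{|\alpha|\le n} H$ together with $\inp{w^\alpha B_\alpha^{-1} y_\alpha}{x} = \inp{y_\alpha}{\overline{w^\alpha} B_\alpha^{*-1}x}$. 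Taking the supremum over unit $x$ and then letting $n \to \infty$, $E_w$ is bounded on the polynomials (hence extends continuously to all of $\mathcal H^2(\mathscr B)$ by density) if and only if $\sup_{\|x\|=1}\sum_{\alpha \in \mathbb N^d} |w^\alpha|^2 \|B_\alpha^{*-1}x\|^2 < \infty$, which is exactly the condition defining $\Omega$.

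**Second step: the inclusion $\Omega \subseteq \sigma_p(T^*)$.** Suppose $w \in \Omega$. Then in particular, for any fixed nonzero $x \in H$, $\sum_{\alpha}|w^\alpha|^2\|B_\alpha^{*-1}x\|^2 < \infty$, so by Theorem \ref{H2-beta-model}(ii) we get $w \in \sigma_p(\mathscr M_z^*)$; since $\mathscr M_z^*$ is unitarily equivalent to $T^*$ via the unitary $U$ of Theorem \ref{H2-beta-model}, it follows that $w \in \sigma_p(T^*)$. This is the easy half, and it is essentially immediate once Theorem \ref{H2-beta-model}(ii) is invoked; the only point to note is that the defining condition of $\Omega$ (a uniform bound over the unit sphere) is formally stronger than the pointwise summability condition in Theorem \ref{H2-beta-model}(ii), so the inclusion goes in the asserted direction and need not be an equality — consistent with Remark \ref{proper-inclusion} promised in the introduction.

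**Main obstacle.** The one place requiring care is the passage from polynomials to the whole space and the interchange of suprema with the limit $n \to \infty$. I would argue as follows: the polynomials are dense in $\mathcal H^2(\mathscr B)$ by construction, so $E_w$ extends continuously if and only if it is bounded on polynomials, and its norm there equals $\sup_n \sup_{\|x\|=1}\big(\sum_{|\alpha|\le n}|w^\alpha|^2\|B_\alpha^{*-1}x\|^2\big)^{1/2}$. Because for each fixed $x$ the partial sums increase in $n$, the double supremum equals $\sup_{\|x\|=1}\sum_{\alpha}|w^\alpha|^2\|B_\alpha^{*-1}x\|^2$ (a supremum of increasing quantities can be taken in either order), so finiteness of the $\Omega$-condition is precisely what is needed. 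A subtlety worth flagging: one must check that when $w \in \Omega$ the extended $E_w$ maps \emph{onto} $H$ (not merely into $H$), which follows since $E_w$ already takes the value $x$ on the constant polynomial $x$ for every $x \in H$. With these observations in place the proof is complete; I do not anticipate any deeper difficulty, the content being a clean Riesz-representation computation layered on top of the model of Theorem \ref{H2-beta-model}.
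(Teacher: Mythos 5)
Your proof is correct, and for the identification of the bpe set with $\Omega$ it is somewhat more direct than the paper's. The paper argues through the adjoint: it first shows by a matrix-coefficient computation that $E_w^*x$ is an eigenvector of $\mathscr M_z^*$ at $\overline w$ (whence the containment of the bpe set in $\sigma_p(\mathscr M_z^*)$, after invoking polycircular symmetry of the point spectrum to drop the conjugate), then identifies $E_w^*x$ as the limit of the truncated kernel-type vectors $f_n(z)=\sum_{|\alpha|\le n}\overline w^\alpha(B_\alpha^*B_\alpha)^{-1}xz^\alpha$ via the pairing $\inp{f_n}{g}=\inp{E_w^*x}{g_n}$ and the resulting uniform bound $\|f_n\|\le\|E_w^*x\|$, and finally reads off $\|E_w^*x\|^2=\sum_\alpha|w^\alpha|^2\|B_\alpha^{*-1}x\|^2$; the converse is done by exhibiting a bounded map $F_w$ with $F_w^*=E_w$. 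Your Cauchy--Schwarz evaluation of $\sup\{|\inp{p(w)}{x}_H| : \|p\|\le 1,\ \deg p\le n\}$ is precisely the predual version of this computation: your extremizing polynomial is (up to conjugation and normalization) the paper's $f_n$, and your monotone double-supremum argument replaces the paper's passage via $\|f_n\|\le\|E_w^*x\|$. What your route buys is that both inclusions, bpe set $\subseteq\Omega$ and $\Omega\subseteq{}$bpe set, fall out of a single two-sided norm identity on polynomials plus density, and the containment in $\sigma_p(T^*)$ becomes a one-line corollary of Theorem~\ref{H2-beta-model}(ii), since the condition there is invariant under conjugation of $w$, sparing you the circularity argument. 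What the paper's route buys is the explicit formula \eqref{Ew*} for $E_w^*x$, which is reused afterwards to construct the reproducing kernel $\kappa(z,w)=E_zE_w^*$ in \eqref{rk}; if one adopts your argument, that formula still has to be extracted separately for the later results.
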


\begin{proof}
Let $w \in \mathbb C^d$ be a bounded point evaluation on $\mathcal H^2(\mathscr B)$.
 Then the evaluation map $E_w : \mathcal H^2(\mathscr B) \rar H$ is continuous. For $x \in H$, consider the vector $E_w^*x \in \mathcal H^2(\mathscr B)$ and note that for any polynomial $p \in \mathcal H^2(\mathscr B)$ and $j \in \{1, \ldots, d\}$,
\beq\label{eigenvector}
\inp{\mathscr M_{z_j}^* E_w^*x}{p}_{\!_{\mathcal H^2(\mathscr B)}} &=& \inp{E_w^*x}{z_j p}_{\!_{\mathcal H^2(\mathscr B)}} = \inp{x}{E_w(z_j p)}_{\!_H} = \inp{x}{w_j p(w)}_{\!_H}\notag\\
&=& \inp{\overline{w}_jx}{p(w)}_{\!_H} = \inp{\overline{w}_j E_w^* x}{p}_{\!_{\mathcal H^2(\mathscr B)}}.
\eeq
Since the polynomials are dense in $\mathcal H^2(\mathscr B)$, it follows that $\mathscr M_{z_j}^* E_w^*x = \overline{w}_j E_w^* x$ for all $j \in \{1, \ldots, d\}$. This shows that the set of all bounded point evaluations is contained in
 $\overline{\sigma_p(\mathscr M_z^*)}$ (the complex conjugate of $\sigma_p(\mathscr M_z^*)$). Since $\sigma_p(\mathscr M_z^*)$ has
 polycircular symmetry (see Corollary \ref{same-spectrum}), it follows that the set of all bounded point evaluations is contained in $\sigma_p(\mathscr M_z^*)$. For the remaining part, we first claim that if $w \in \mathbb C^d$ is a bounded point evaluation, then 
 \beqn
 f(z) := \sum_{\alpha \in \mathbb N^d} \overline w^\alpha (B_\alpha^{*} B_\alpha)^{-1} x z^\alpha \in \mathcal H^2(\mathscr B). 
 \eeqn
To see this, let $g(z) = \sum_{\alpha \in \mathbb N^d} x_\alpha z^\alpha \in \mathcal H^2(\mathscr B)$. For $n \in \mathbb N$, set 
\beqn
f_n(z) := \sum_{\underset{|\alpha| \leqslant n}{\alpha \in \mathbb N^d}} \overline w^\alpha (B_\alpha^{*} B_\alpha)^{-1} x z^\alpha \quad \mbox{and} \quad g_n(z) := \sum_{\underset{|\alpha| \leqslant n}{\alpha \in \mathbb N^d}} x_\alpha z^\alpha.
\eeqn
Then for $n \in \mathbb N$, we get
\beq\label{fn}
\inp{f_n(z)}{g(z)}_{\!_{\mathcal H^2(\mathscr B)}} \!\! &=&\!\! \Big\langle \sum_{\underset{|\alpha| \leqslant n}{\alpha \in \mathbb N^d}} \overline w^\alpha (B_\alpha^{*} B_\alpha)^{-1} x z^\alpha, \sum_{\alpha \in \mathbb N^d} x_\alpha z^\alpha\Big\rangle_{\!_{\mathcal H^2(\mathscr B)}}\notag\\
&=& \sum_{\underset{|\alpha| \leqslant n}{\alpha \in \mathbb N^d}} \inp{\overline w^\alpha B_\alpha^{*-1} x}{B_\alpha x_\alpha}_{\!_H}
= \Big\langle x,\ \sum_{\underset{|\alpha| \leqslant n}{\alpha \in \mathbb N^d}} w^\alpha x_\alpha \Big\rangle_{\!_H} = \inp{x}{E_w g_n}_{\!_H}\notag\\
&=&\inp{E_w^* x}{g_n}_{\!_{\mathcal H^2(\mathscr B)}}.
\eeq
This, in turn, implies that 
\beqn
\|f_n\|_{\!_{\mathcal H^2(\mathscr B)}} = \sup_{\underset{\|g\|=1}{g \in \mathcal H^2(\mathscr B)}}|\inp{f_n(z)}{g(z)}_{\!_{\mathcal H^2(\mathscr B)}}| \leqslant \|E_w^* x\|_{\!_{\mathcal H^2(\mathscr B)}} \quad \mbox{for all } n \in \mathbb N.
\eeqn
Thus $f \in \mathcal H^2(\mathscr B)$ and hence, the claim stands verified. Further, taking $n \rar \infty$ on both sides of \eqref{fn}, we get
\beq\label{Ew*}
E_w^* x = \sum_{\alpha \in \mathbb N^d} \overline w^\alpha (B_\alpha^{*} B_\alpha)^{-1} x z^\alpha \mbox{ for all } x \in H.
\eeq
Now the continuity of $E_w^*$ gives that
\beqn
 \sup_{x \in H, \|x\|=1} \|E_w^* x\|^2 \overset{\eqref{Ew*}}= \sup_{x \in H, \|x\|=1} \sum_{\alpha \in \mathbb N^d} |w^\alpha|^2 \|B_\alpha^{*-1}x\|^2 < \infty.
\eeqn
Thus the set of all bounded point evaluations is contained in $\Omega$.

Conversely, suppose that $w \in \Omega$. Then for each $x \in H$,
\beqn
g_{w,x} (z) := \sum_{\alpha \in \mathbb N^d} \overline w^\alpha B_\alpha^{-1} B_\alpha^{*-1} x z^\alpha \in \mathcal H^2(\mathscr B).
\eeqn
Now define $F_w : H \rar \mathcal H^2(\mathscr B)$ by $F_w x = g_{w,x} (z)$ for all $x \in H$.
It follows from the definition of $\Omega$ that $F_w$ is a bounded linear map. Further, for any polynomial $p \in \mathcal H^2(\mathscr B)$ and $x \in H$,
\beqn
\inp{F_w^* p}{x}_H = \inp{p}{F_w x}_{\mathcal H^2(\mathscr B)} = \inp{p}{g_{w,x} (z)}_{\mathcal H^2(\mathscr B)} = \inp{p(w)}{x}_H.
\eeqn
Thus $F_w^* = E_w$ and hence $w$ is a bounded point evaluation. This completes the proof of the proposition.
\end{proof}

\begin{remark}\label{proper-inclusion}
\hspace{-1cm}\begin{enumerate}
\item[(i)] An alternate verification of \eqref{Ew*} can be seen as follows: Since $0$ is always a bounded point evaluation, it is easy to see that $E_0^* x = x$ for all $x \in H$. Suppose that $w \in \mathbb C^d$ is a bounded point evaluation. Then for all $x, y \in H$, we get
\beqn
\inp{E_0 E_w^* x}{y}_{\!_H} = \inp{E_w^* x}{E_0^* y}_{\!_{\mathcal H^2(\mathscr B)}} = \inp{x}{y}_{\!_H}.
\eeqn
This shows that $E_0 E_w^* x = x$ for all $x \in H$. Since $E_w^*x$ is an eigenvector of $\mathscr M_z^*$ corresponding to eigenvalue $\overline{w}$, it follows from the proof of Theorem \ref{H2-beta-model}(ii) that there exists a non-zero vector $y \in H$ such that
 \beqn
 E_w^* x = \sum_{\alpha \in \mathbb N^d} \overline w^\alpha (B_\alpha^{*} B_\alpha)^{-1} y z^\alpha.
 \eeqn
Using $E_0 E_w^* x = x$, we get $y = x$. This verifies \eqref{Ew*}.
\item[(ii)]Unlike the classical case (\cite[Proposition 19]{JL}), the inclusion in the preceding proposition may be strict in general. For example, consider
  the commuting operator-valued multishift $T = (T_1, \ldots, T_d)$ on
  $\ell^2_{\mathbb C^2}(\mathbb N^d)$ with operator weights given by
  $A^{(j)}_{\alpha} = \begin{pmatrix} 2 & 0 \\ 0 & \frac{1}{2} \end{pmatrix}$ for
  all $\alpha \in \mathbb N^d$ and $j = 1, \ldots, d$. Then it is easy to see that
  $B_\alpha = \begin{pmatrix} 2^{|\alpha|} & 0 \\ 0 & \frac{1}{2^{|\alpha|}}
   \end{pmatrix}$ for all $\alpha \in \mathbb N^d$. Let $e_1 = (1,0)$ and $e_2 = (0,1)$
   be two orthonormal vectors in $\mathbb C^2$. Then observe that for $w = (1,\ldots, 1) \in \mathbb C^d$,
\beqn
\sum_{\alpha \in \mathbb N^d} |w^\alpha|^2 \|{B_\alpha^*}^{-1}e_1\|^2 <\infty
\mbox{ but } \sum_{\alpha \in \mathbb N^d} |w^\alpha|^2 \|{B_\alpha^*}^{-1}e_2\|^2 = \infty.
\eeqn
Thus by Theorem \ref{H2-beta-model}(ii), $w \in \sigma_p(T^*)$ but $w$ is not a bounded point evaluation.
\end{enumerate}
\end{remark}

The observation in the above remark motivates us for the following proposition. 

\begin{proposition}
Let $n$ be a positive integer and $T = (T_1, \ldots, T_d)$ be a commuting operator-valued multishift on $\ell^2_{\mathbb C^n}(\mathbb N^d)$ with invertble operator weights given by 
\beqn
A^{(j)}_{\alpha} := \begin{pmatrix} w_{1,\alpha}^{(j)} & 0 & \ldots & 0 \\ 0 & w_{2,\alpha}^{(j)} & \ldots & 0\\
\vdots & \vdots & \ddots & \vdots\\
0 & 0& \ldots & w_{n,\alpha}^{(j)} \end{pmatrix},
\eeqn
where $w_{k,\alpha}^{(j)} \in \mathbb C$ for all $\alpha \in \mathbb N^d$, $j = 1, \ldots, d$ and $k = 1, \ldots, n$.
Let $\mathscr B = \{B_\alpha \in \mathcal G(H) : \alpha \in \mathbb N^d\}$,
where $B_\alpha$ is as defined in Proposition \ref{moments}. Let $\Omega$ be the set of all bounded point evaluations on $\mathcal H^2(\mathscr B)$.
Then the following statements hold:
\begin{enumerate}
\item[(i)] $T$ is unitarily equivalent to $W_1 \oplus \cdots \oplus W_n$, where for each $k \in \{1, \ldots, n\}$, $W_k$ is the classical multishift on $\ell^2(\mathbb N^d)$ with weights $\{w_{k,\alpha}^{(j)} : \alpha \in \mathbb N^d,\ j=1,\ldots, d\}$.  
\item[(ii)] $\mathcal H^2(\mathscr B) = \mathcal H^2(\mathscr B_1) \oplus \cdots \oplus\mathcal H^2(\mathscr B_n)$, where for $k \in \{1, \ldots, n\}$, $\mathscr B_k = \{B_{k,\alpha} : \alpha \in \mathbb N^d\}$ and $B_{k,\alpha}$ is given by the expression of $B_\alpha$ in Proposition \ref{moments} by replacing $A^{(j)}_{\alpha}$ with $w_{k,\alpha}^{(j)}$.
\item[(iii)] $\Omega = \cap_{k=1}^n \sigma_p(W_k^*)$. 
\end{enumerate} 
\end{proposition}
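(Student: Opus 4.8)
The three statements are essentially the assertion that everything in sight respects the orthogonal decomposition $\mathbb C^n = \bigoplus_{k=1}^n \mathbb C e_k$, where $e_1, \ldots, e_n$ is the standard orthonormal basis, because each weight $A^{(j)}_\alpha$ is diagonal in this basis. The first thing I would do is record the key structural observation: since $A^{(j)}_\alpha = \bigoplus_{k=1}^n w^{(j)}_{k,\alpha} I_{\mathbb C}$ under the identification $\mathbb C^n = \bigoplus_k \mathbb C e_k$, an easy induction using the explicit formula for $B_\alpha = B(\alpha,\alpha)$ from Proposition \ref{moments} shows that $B_\alpha = \bigoplus_{k=1}^n B_{k,\alpha}$, where $B_{k,\alpha}$ is the scalar obtained by running the same product of operators with $w^{(j)}_{k,\cdot}$ in place of $A^{(j)}_\cdot$. (Indeed each $A^{(j)}(\alpha,m)$ and hence each $B(\alpha,\beta)$ is block-diagonal with respect to this fixed decomposition, since products and compositions of diagonal operators stay diagonal.) In particular $B_\alpha$ is invertible iff each $B_{k,\alpha}$ is nonzero, which holds iff each $w^{(j)}_{k,\alpha} \neq 0$; this is exactly the hypothesis that the $A^{(j)}_\alpha$ are invertible, so the constructions of Proposition \ref{moments} and Theorem \ref{H2-beta-model} apply to each scalar strand.

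For (i), I would set $\mathcal H_k := \ell^2(\mathbb N^d)$ viewed as $\bigoplus_{\alpha} \mathbb C e_k \subseteq \bigoplus_\alpha \mathbb C^n$, note that $\ell^2_{\mathbb C^n}(\mathbb N^d) = \bigoplus_{k=1}^n \mathcal H_k$ and that each $\mathcal H_k$ reduces every $T_j$ (because $A^{(j)}_\alpha$ is diagonal, $T_j$ and $T_j^*$ preserve the $e_k$-strand), and then observe directly from the definition of $T_j$ that $T_j|_{\mathcal H_k}$ is, after the obvious unitary identification of $\mathcal H_k$ with $\ell^2(\mathbb N^d)$, precisely the $j$-th component of the classical multishift $W_k$ with weights $\{w^{(j)}_{k,\alpha}\}$. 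This gives $T \cong W_1 \oplus \cdots \oplus W_n$.

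For (ii), I would use the unitary $U : \ell^2_{\mathbb C^n}(\mathbb N^d) \to \mathcal H^2(\mathscr B)$ of Theorem \ref{H2-beta-model} together with the block-diagonality $B_\alpha = \bigoplus_k B_{k,\alpha}$: an $\mathbb C^n$-valued formal power series $\sum_\alpha x_\alpha z^\alpha$ with $x_\alpha = \sum_k x_{k,\alpha} e_k$ lies in $\mathcal H^2(\mathscr B)$ iff $\sum_\alpha \sum_k |B_{k,\alpha} x_{k,\alpha}|^2 < \infty$ iff each scalar series $\sum_\alpha x_{k,\alpha} z^\alpha$ lies in $\mathcal H^2(\mathscr B_k)$, and the inner product splits accordingly; this is the claimed orthogonal direct sum decomposition, and it intertwines $\mathscr M_z$ with $\mathscr M_z^{(1)} \oplus \cdots \oplus \mathscr M_z^{(n)}$. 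For (iii), I would combine Proposition \ref{rep-ker} (the set of bpe equals $\Omega = \{w : \sup_{\|x\|=1}\sum_\alpha |w^\alpha|^2 \|B_\alpha^{*-1} x\|^2 < \infty\}$) with the block structure: since $B_\alpha^{*-1} = \bigoplus_k \overline{B_{k,\alpha}}^{-1}$, for a unit vector $x = \sum_k c_k e_k$ one has $\sum_\alpha |w^\alpha|^2 \|B_\alpha^{*-1}x\|^2 = \sum_k |c_k|^2 \sum_\alpha |w^\alpha|^2 |B_{k,\alpha}|^{-2}$, so the supremum over $\|x\|=1$ is finite iff $\sum_\alpha |w^\alpha|^2 |B_{k,\alpha}|^{-2} < \infty$ for every $k$; by Theorem \ref{H2-beta-model}(ii) applied to the scalar multishift $W_k$ (whose moment sequence is exactly $|B_{k,\alpha}|^2$), this last condition for a fixed $k$ says precisely $w \in \sigma_p(W_k^*)$. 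Hence $\Omega = \bigcap_{k=1}^n \sigma_p(W_k^*)$.

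The only step requiring genuine care — and the one I'd expect to be the main obstacle — is the bookkeeping identity $B_\alpha = \bigoplus_{k=1}^n B_{k,\alpha}$: one must check that the nested products $A^{(j)}(\alpha,m)$ appearing in the definition of $B(\alpha,\beta)$, which involve operators between different spaces $H_{\alpha - m\varepsilon_j} \to H_\alpha$, are all block-diagonal with respect to the \emph{same} fixed splitting $\mathbb C^n = \bigoplus_k \mathbb C e_k$ of every $H_\alpha = \mathbb C^n$, so that the product of the blocks is the block of the product; once this is in place, and once one notes $B_{k,\alpha}$ is exactly the scalar moment of $W_k$, statements (i)–(iii) are formal consequences of the already-proved Proposition \ref{moments}, Theorem \ref{H2-beta-model}, and Proposition \ref{rep-ker}.
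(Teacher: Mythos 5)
Your proposal is correct and follows essentially the same route as the paper: the block-diagonal identity $B_\alpha = \oplus_{k=1}^n B_{k,\alpha}$ drives (ii), and (iii) is obtained by combining Proposition \ref{rep-ker} with this splitting and Theorem \ref{H2-beta-model}(ii) applied to each scalar strand, exactly as in the paper's argument. The only difference is that for (i) the paper simply cites \cite[Proposition 3.4]{GKT}, whereas you supply the (routine) direct verification that each coordinate strand reduces $T$.
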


\begin{proof}
Part (i) has already been established in \cite[Proposition 3.4]{GKT}. The conclusion in (ii) follows from the observations that $\ell^2_{\mathbb C^n}(\mathbb N^d) = \oplus_{k=1}^n  \ell^2(\mathbb N^d)$ and $B_\alpha = \oplus_{k=1}^n B_{k,\alpha}$. To see (iii), suppose that $w \in \Omega$. Then by Theorem \ref{rep-ker}, we have 
\beqn
\sup_{x \in \mathbb C^n, \|x\|=1} \sum_{\alpha \in \mathbb N^d} |w^\alpha|^2 \|B_\alpha^{*-1}x\|^2 < \infty.
\eeqn
Hence it follows from (ii) that 
\beqn
\sum_{\alpha \in \mathbb N^d} |w^\alpha|^2 |B_{k,\alpha}|^{-2} < \infty \mbox{ for all } k=1, \ldots, n.
\eeqn
Therefore, by Theorem \ref{H2-beta-model}(ii) (see also \cite[pp. 220]{JL}), we get that $w \in \sigma_p(W_k^*)$ for all $k = 1, \ldots, n$. This proves that $\Omega \subseteq \cap_{k=1}^n \sigma_p(W_k^*)$. The proof for the reverse inclusion is similar.
\end{proof}

Let $f = \sum_{\alpha \in \mathbb N^d} x_\alpha z^\alpha \in \mathcal H^2(\mathscr B)$ and $\{p_n\}_{n \in \mathbb N}$ be a sequence of polynomials converging to $f$ in $\mathcal H^2(\mathscr B)$. Let $\Omega$ be the set of all bounded point evaluations on $\mathcal H^2(\mathscr B)$. If $w \in \Omega$, then we define $f(w)$ by
\beqn
f(w) := E_w f = \lim_{n \rar \infty} E_w p_n = \lim_{n \rar \infty} p_n(w).
\eeqn 
Since $f(w)$ is well-defined for all $w \in \Omega$, $f$ defines an $H$-valued function on $\Omega$. In other words, if $f \in \mathcal H^2(\mathscr B)$, then $f|_\Omega$ is an $H$-valued function on $\Omega$, where the restriction of $f$ on $\Omega$ should be interpreted in above sense. Consider the inner product space $\mathcal H(\kappa) := \{f|_\Omega : f \in \mathcal H^2(\mathscr B)\}$, with the inner product inherited from $\mathcal H^2(\mathscr B)$. Then it is easy to see that $\mathcal H(\kappa)$ is a Hilbert space of $H$-valued functions on $\Omega$, and $H$-valued polynomials are dense in $\mathcal H(\kappa)$. Further,  it follows from \cite[Chapter 6]{PR} that $\mathcal H(\kappa)$ is a reproducing kernel Hilbert space with the reproducing kernel $\kappa : \Omega \times \Omega \rar \mathcal B(H)$ given by
\beq\label{rk}
\kappa(z,w) = E_z E_w^* \overset{\eqref{Ew*}}= \sum_{\alpha \in \mathbb N^d} \big(B_\alpha^* B_\alpha\big)^{-1} z^\alpha \overline{w}^\alpha, \quad z, w \in \Omega.\eeq

It turns out that if $\Omega$ has non-empty interior, then the elements of $\mathcal H(\kappa)$ define $H$-valued holomorphic functions on the interior of $\Omega$. Indeed, we have the following proposition.

\begin{proposition}\label{holo}
Let $\mathcal H(\kappa)$ be defined as above.
If $\Omega$ has a non-empty interior, then $\mathcal H(\kappa)$ is a Hilbert space of $H$-valued holomorphic functions on the interior of $\Omega$.
\end{proposition}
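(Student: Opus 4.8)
The plan is to show that each $f \in \mathcal H(\kappa)$, regarded as an $H$-valued function on $\Omega$ via $f(w) = E_w f$, is holomorphic on $\mathrm{int}(\Omega)$, and that norm-convergence in $\mathcal H(\kappa)$ forces local uniform convergence of the associated functions so that holomorphy is preserved under limits. First I would fix a point $w_0 \in \mathrm{int}(\Omega)$ and a closed polydisc (or ball) $\overline{\mathbb D}(w_0,r) \subseteq \mathrm{int}(\Omega)$. The key quantitative input is the reproducing property together with \eqref{rk}: for $w$ in this polydisc and $x \in H$ with $\|x\| = 1$, one has $\|f(w)\|_H = \sup_{\|x\|=1} |\langle f(w), x\rangle_H| = \sup_{\|x\|=1} |\langle f, E_w^* x\rangle_{\mathcal H^2(\mathscr B)}| \le \|f\|_{\mathcal H^2(\mathscr B)} \, \|E_w^* x\| = \|f\|_{\mathcal H^2(\mathscr B)} \, \|\kappa(w,w)\|^{1/2}$. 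So I need a uniform bound $\sup_{w \in \overline{\mathbb D}(w_0,r)} \|\kappa(w,w)\| < \infty$; this follows because the interior points of $\Omega$ at which the defining supremum $\sup_{\|x\|=1}\sum_\alpha |w^\alpha|^2 \|B_\alpha^{*-1} x\|^2$ is finite actually admit a locally uniform bound — one can slightly enlarge the radius to $r' > r$ with $\overline{\mathbb D}(w_0,r') \subseteq \mathrm{int}(\Omega)$, use that each coefficient $w \mapsto |w^\alpha|^2$ is dominated on $\overline{\mathbb D}(w_0,r)$ by $(r'/r)^{-2|\alpha|}$ times its value at a boundary point of the larger polydisc, hence $\sum_\alpha |w^\alpha|^2 \|B_\alpha^{*-1}x\|^2$ converges uniformly in $w$ on $\overline{\mathbb D}(w_0,r)$ and $\|x\| \le 1$. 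This is the step I expect to require the most care, since $\Omega$ need not be Reinhardt a priori, but polycircular symmetry of $\Omega$ (which is inherited from that of $\sigma_p(\mathscr M_z^*)$ via Theorem \ref{rep-ker} and Corollary \ref{same-spectrum}) lets me compare $|w^\alpha|$ with the corresponding modulus at a nearby point of larger modulus in each coordinate.

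Granting the local uniform bound $\|f(w)\|_H \le C_{w_0,r}\, \|f\|_{\mathcal H^2(\mathscr B)}$ for $w \in \overline{\mathbb D}(w_0,r)$, I next show holomorphy. For an $H$-valued polynomial $p$, $p|_\Omega$ is manifestly holomorphic, being a (finite) $H$-valued polynomial in $z$. For general $f \in \mathcal H^2(\mathscr B)$, take polynomials $p_n \to f$ in $\mathcal H^2(\mathscr B)$; then by the bound just established, $p_n(w) \to f(w)$ uniformly on $\overline{\mathbb D}(w_0,r)$. A uniform limit of $H$-valued holomorphic functions on a polydisc is $H$-valued holomorphic (for instance, by checking that Cauchy's integral formula passes to the limit coordinate-wise, or by testing against functionals in $H^*$ and invoking the scalar case together with uniform boundedness). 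Since $w_0 \in \mathrm{int}(\Omega)$ was arbitrary, every $f \in \mathcal H(\kappa)$ is $H$-valued holomorphic on $\mathrm{int}(\Omega)$.

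Finally I would record that this identification is compatible with the Hilbert space structure of $\mathcal H(\kappa)$: the evaluation maps $E_w$, $w \in \mathrm{int}(\Omega)$, are bounded and agree with genuine function evaluation, so $\mathcal H(\kappa)$ is indeed a reproducing kernel Hilbert space of $H$-valued holomorphic functions on $\mathrm{int}(\Omega)$ with kernel $\kappa$ given by \eqref{rk}, the series converging locally uniformly there. The only genuinely nontrivial ingredient is the local uniform convergence of $\sum_\alpha |w^\alpha|^2 \|B_\alpha^{*-1}x\|^2$ on compact subsets of $\mathrm{int}(\Omega)$, uniformly in the unit ball of $H$; everything else is a routine transfer of the scalar Abel-summation / Weierstrass-convergence argument to the vector-valued setting.
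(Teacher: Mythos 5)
Your proof is correct, but it is organized differently from the paper's. Both arguments rest on the same Abel-type domination: every $w$ in the interior of $\Omega$ is dominated coordinatewise in modulus by some $\tilde w$ still lying in the interior of $\Omega$, and then $|w^\alpha|\leqslant\theta^{|\alpha|}|\tilde w^\alpha|$ with $\theta<1$ reduces the relevant series to a convergent geometric one. The paper applies this pointwise to a fixed $f=\sum_\alpha x_\alpha z^\alpha$: it extracts $\sup_\alpha\|\tilde w^\alpha x_\alpha\|<\infty$ from the Cauchyness of $E_{\tilde w}p_n$ for the partial sums $p_n$, and concludes $\sum_\alpha\|w^\alpha x_\alpha\|<\infty$, i.e.\ absolute convergence at each interior point. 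You instead apply the domination to the kernel diagonal to obtain $\sup_{w\in K}\|\kappa(w,w)\|<\infty$ on compact polydiscs $K$ in the interior, hence the uniform bound $\|f(w)\|\leqslant C_K\|f\|$, and finish via density of polynomials and the Weierstrass convergence theorem for Banach-space-valued holomorphic functions. Your route makes explicit the step the paper elides (``converges absolutely at $w$ and hence is holomorphic'' is really a locally uniform convergence statement), at the cost of invoking the vector-valued weak-equals-strong holomorphy machinery; it also yields the useful byproduct that $\kappa$ is locally bounded on the interior. Two minor corrections: the comparison factor should be $\theta^{2|\alpha|}$ for a fixed $\theta<1$ rather than $(r'/r)^{-2|\alpha|}$ --- one gets the dominating point $\tilde w\in\Omega$ by noting that the point whose $j$-th coordinate has modulus $|w_{0,j}|+r$ (phases aligned with $w_0$) already lies in the closed polydisc, hence in the interior of $\Omega$, and can then be enlarged slightly; and the polycircular symmetry of $\Omega$ is immediate from its defining formula (which depends only on the $|w_j|$), so there is no need to route it through $\sigma_p(\mathscr M_z^*)$.
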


\begin{proof}
Let $w=(w_1,\ldots,w_d)$ be any point in the interior of $\Omega$. Then there exists $\tilde{w}=(\tilde{w_1},\ldots,\tilde{w_d})$ in the interior of $\Omega$ such that $|w_j|<|\tilde{w_j}|$ for all $j=1,\ldots,d$. Let $\phi:\mathbb N\rar\mathbb N^d$ be a bijective map and $f=\sum_{\alpha \in \mathbb N^d} x_\alpha z^\alpha\in \mathcal H(\kappa)$. Note that the sequence of polynomials $\{p_n\}_{n\in\mathbb N}$ converges to $f$ in $\mathcal H(\kappa)$, where for each $n\in\mathbb N$,
\beqn
 p_n(z) =\sum_{j=1}^{n} x_{\phi(j)} z^{\phi(j)}, \quad z \in \Omega.
\eeqn
Let $\epsilon>0$. Since $\{E_{\tilde{w}} p_n\}_{n\in\mathbb N^d}$ is a Cauchy sequence in $H$, there exists $n_0\in\mathbb N$ such that $\|E_{\tilde{w}}p_n-E_{\tilde{w}}p_{n+1}\|=\|x_{\phi(n+1)}\| |\tilde{w}^{\phi(n+1)}| < \epsilon$ for all $n\geqslant n_0.$
This proves that $\{\tilde{w}^\alpha x_\alpha\}_{\alpha\in\mathbb N^d}$ is a bounded sequence in $H$. Therefore, we have
\beqn
\sum_{\alpha\in\mathbb N^d}\|w^\alpha x_\alpha\|  = \sum_{\alpha\in\mathbb N^d}\|\tilde{w}^\alpha x_\alpha\|  \frac{|w^\alpha|}{|\tilde{w}^\alpha|}\leqslant \sup_{\alpha\in\mathbb N^d} \|\tilde{w}^\alpha x_\alpha\|  \sum_{\alpha\in\mathbb N^d} \frac{|w^\alpha|}{|\tilde{w}^\alpha|}<\infty.
\eeqn
Thus $f$ converges absolutely at $w$ and hence is holomorphic on the interior of $\Omega.$ This completes the proof of proposition.
\end{proof}

The following theorem shows that the operator-valued multishifts can be realized as the tuple of operators of multiplication by the coordinate functions on a reproducing kernel Hilbert space of vector valued holomorphic functions, provided the set of bounded point evaluations has non-empty interior.

\begin{theorem}\label{T-H(k)}
Let $H$ be a complex separable Hilbert space and $T = (T_1, \ldots, T_d)$ be a
commuting operator-valued multishift on $\ell^2_H(\mathbb N^d)$ with invertible operator
weights $\{A^{(j)}_{\alpha} : \alpha \in \mathbb N^d,\  j=1, \ldots, d\}$.
Let $\mathscr B = \{B_\alpha \in \mathcal G(H) : \alpha \in \mathbb N^d\}$,
where $B_\alpha$ is as defined in Proposition \ref{moments}. Let $\Omega$ be the set of all bounded point evaluations on $\mathcal H^2(\mathscr B)$. Then for every $w \in \Omega,\; \ker{(\mathscr M_z^* - \overline{w})}=
\{\kappa(\cdot, w) x : x \in H \}$ and \beq\label{span-closure} \bigvee_{w \in
\Omega}\ker{(\mathscr M_z^* - \overline{w})}= \mathcal H(\kappa).\eeq
Moreover, if $\Omega$ has non-empty interior, then $T= (T_1, \ldots, T_d)$ is unitarily equivalent to the $d$-tuple $\mathscr M_z = (\mathscr M_{z_1}, \ldots, \mathscr M_{z_d})$ of multiplication operators by the coordinate functions on $\mathcal H(\kappa)$.
\end{theorem}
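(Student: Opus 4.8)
The plan is to prove the two displayed assertions first and then obtain the unitary equivalence as a consequence. For the eigenspace description, fix $w \in \Omega$. Since $\Omega$ is contained in $\sigma_p(\mathscr M_z^*)$ by Proposition \ref{rep-ker}, the argument in the proof of Theorem \ref{H2-beta-model}(ii) (see the recursion \eqref{recurse}) shows that any eigenvector of $\mathscr M_z^*$ corresponding to $\overline w$ is of the form $\sum_{\alpha \in \mathbb N^d} \overline{w}^\alpha (B_\alpha^* B_\alpha)^{-1} x\, z^\alpha$ for some $x \in H$; comparing with \eqref{rk}, this is exactly $\kappa(\cdot, w) x$. Conversely, the reproducing property gives $\mathscr M_{z_j}^* \kappa(\cdot, w) x = \overline{w}_j \kappa(\cdot, w) x$ (equivalently, use \eqref{eigenvector} in the proof of Proposition \ref{rep-ker}, since $\kappa(\cdot,w)x = E_w^* x$). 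Hence $\ker(\mathscr M_z^* - \overline w) = \{\kappa(\cdot, w) x : x \in H\}$.

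For \eqref{span-closure}, I would show that the orthogonal complement of $\bigvee_{w \in \Omega}\ker(\mathscr M_z^* - \overline w)$ in $\mathcal H(\kappa)$ is trivial. If $f = \sum_{\alpha} x_\alpha z^\alpha \in \mathcal H(\kappa)$ is orthogonal to every $\kappa(\cdot, w) x = E_w^* x$, then $\langle f, E_w^* x\rangle = \langle E_w f, x\rangle = \langle f(w), x\rangle = 0$ for all $x \in H$ and all $w \in \Omega$, so $f$ vanishes identically on $\Omega$. Since $0 \in \Omega$ and more generally every monomial coefficient can be recovered — here one uses that $0$ is a bpe, so $x_0 = f(0) = 0$, and then iterating (either via the circular symmetry of $\Omega$ and a standard Taylor-coefficient extraction, or by applying the same argument to the shifted series, i.e. to suitable $\mathscr M_z^*$-images of $f$, which stay in $\mathcal H(\kappa)$) forces $x_\alpha = 0$ for all $\alpha$. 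Thus $f = 0$ and \eqref{span-closure} holds.

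Finally, assume $\Omega$ has non-empty interior. By Proposition \ref{holo}, $\mathcal H(\kappa)$ is a Hilbert space of $H$-valued holomorphic functions on $\mathrm{int}(\Omega)$, so the restriction map $\mathcal H^2(\mathscr B) \to \mathcal H(\kappa)$, $f \mapsto f|_\Omega$, is injective (a holomorphic function vanishing on a nonempty open set vanishes, and then the norm identity $\|x z^\alpha\|_{\mathcal H^2(\mathscr B)} = \|B_\alpha x\|_H$ forces all coefficients to vanish), hence a unitary, and it intertwines $\mathscr M_{z_j}$ on $\mathcal H^2(\mathscr B)$ with $\mathscr M_{z_j}$ on $\mathcal H(\kappa)$ since both are multiplication by $z_j$ and polynomials are dense. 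Composing with the unitary $U$ from Theorem \ref{H2-beta-model} that satisfies $\mathscr M_{z_j} U = U T_j$, we conclude $T$ is unitarily equivalent to $\mathscr M_z = (\mathscr M_{z_1}, \ldots, \mathscr M_{z_d})$ on $\mathcal H(\kappa)$.

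The main obstacle is the density statement \eqref{span-closure}: one must be careful that "$f$ vanishes on $\Omega$ implies $f = 0$ in $\mathcal H(\kappa)$" genuinely uses the structure of $\Omega$ (polycircular symmetry from Corollary \ref{same-spectrum}, together with $0 \in \Omega$), rather than smuggling in the non-empty-interior hypothesis, since \eqref{span-closure} is claimed without that hypothesis; the cleanest route is to extract Taylor coefficients one multidegree at a time using that $\Omega$ is a union of tori and that $\mathcal H(\kappa)$ is closed under the coefficient-shifting adjoints $\mathscr M_{z_j}^*$.
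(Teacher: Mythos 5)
Your proof is correct and follows essentially the same route as the paper's: one inclusion of the eigenspace identity comes from the reproducing property, the reverse inclusion by passing to $\mathcal H^2(\mathscr B)$ and invoking the recursion from the proof of Theorem \ref{H2-beta-model}(ii), and the moreover part by composing the restriction unitary (injective by Proposition \ref{holo}) with the unitary of Theorem \ref{H2-beta-model}. The only divergence is at \eqref{span-closure}, which the paper disposes of in one line---an element of the reproducing kernel Hilbert space $\mathcal H(\kappa)$ that vanishes at every point of $\Omega$ is by definition the zero element, so no Taylor-coefficient extraction is needed; your polycircular-symmetry argument is a harmless elaboration, but the fallback you suggest is shaky, since the orthogonal complement of an $\mathscr M_{z_j}^*$-invariant subspace is $\mathscr M_{z_j}$-invariant rather than $\mathscr M_{z_j}^*$-invariant, so it is not automatic that $\mathscr M_{z_j}^*f$ again annihilates the kernel functions.
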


\begin{proof}
Fix $w \in \Omega$. It follows from the general theory of reproducing kernel Hilbert space  that for each non-zero $x \in H$,  $\kappa(\cdot, w)x$ is an eigenvector for $\mathscr M_z^*$ corresponding to the eigenvalue $\overline{w}$. Hence, $\{\kappa(\cdot, w)x : x \in H \}
\subseteq \ker{(\mathscr M_z^* - \overline{w})}.$ Further, suppose that $f \in
\ker{(\mathscr M_z^* - \overline{w})} \subseteq \mathcal H(\kappa)$. Then $f = g|_\Omega$ for some $g \in \mathcal H^2(\mathscr B)$. For an $H$-valued polynomial $p$ and $j \in \{1, \ldots, d\}$, we get
\beqn
\inp{\overline{w}_j g}{p}_{\!_{\mathcal H^2(\mathscr B)}} &=& \inp{\overline{w}_j f}{p}_{\!_{\mathcal H(\kappa)}} = \inp{\mathscr M^*_{z_j} f}{p}_{\!_{\mathcal H(\kappa)}} = \inp{f}{z_j p}_{\!_{\mathcal H(\kappa)}} = \inp{g}{z_j p}_{\!_{\mathcal H^2(\mathscr B)}}\\ 
&=& \inp{\mathscr M^*_{z_j} g}{p}_{\!_{\mathcal H^2(\mathscr B)}}.
\eeqn
Thus $g \in \ker{(\mathscr M_z^* - \overline{w})} \subseteq \mathcal H^2(\mathscr B)$. It follows from
the proof of Theorem \ref{H2-beta-model}(ii) that there exists a non-zero
vector $x$ in $H$ such that 
\beqn g(z) = \displaystyle \sum_{\alpha
\in \mathbb N^d} \overline{w}^\alpha (B_\alpha^* B_\alpha)^{-1} x z^\alpha. 
\eeqn 
Since $g|_\Omega  = f$, $f = \kappa(\cdot, w)x$. This shows that $\{\kappa(\cdot, w)x : x \in H \} = \ker{(\mathscr M_z^* - \overline{w})}.$ The conclusion in \eqref{span-closure} follows from
the reproducing property of $\mathcal H(\kappa).$

For the moreover part, in view of Theorem \ref{H2-beta-model}, it is enough to show that $\mathscr M_z = (\mathscr M_{z_1}, \ldots, \mathscr M_{z_d})$ on $\mathcal H^2(\mathscr B)$ is unitarily equivalent to  $\mathscr M_z = (\mathscr M_{z_1}, \ldots, \mathscr M_{z_d})$ on $\mathcal H(\kappa).$ To this end, suppose that $\Omega$ has non-empty interior. Define $U : \mathcal H^2(\mathscr B) \rar \mathcal H(\kappa)$ by
\beqn
U f = f|_{\Omega}, \quad f \in \mathcal H^2(\mathscr B).
\eeqn
Since $\Omega$ has non-empty interior, in the view of Proposition \ref{holo}, $U$ is injective. As the inner product in $\mathcal H(\kappa)$ is inherited from $\mathcal H^2(\mathscr B)$, it follows that $U$ is a surjective isometry. Now for $f \in \mathcal H^2(\mathscr B)$ and $j \in \{1, \ldots, d\}$, we have
\beqn
U \mathscr M_{z_j} f = U(z_j f) = (z_j f)|_\Omega = \mathscr M_{z_j} f|_\Omega = \mathscr M_{z_j} U f.
\eeqn
This completes the proof of the theorem.
\end{proof}

Let $\Omega$ be a subset of $\mathbb C^d$ with non-empty interior. Let $\mathcal H(\kappa)$ be a reproducing kernel Hilbert space of vector-valued functions on $\Omega$ which are holomorphic on the interior of $\Omega$ with reproducing kernel $\kappa$. Suppose that $\mathscr M_{z_j}$, the multiplication operator by the coordinate function $z_j$, on $\mathcal H(\kappa)$ is bounded for all $j=1,\ldots,d$. Then it is easy to see that  the $d$-tuple $\mathscr M_z=(\mathscr M_{z_1},\ldots,\mathscr M_{z_d})$ on $\mathcal H(\kappa)$ is unitarily equivalent to the $d$-tuple $\mathscr M_z=(\mathscr M_{z_1},\ldots,\mathscr M_{z_d})$ on $\mathcal H(\kappa|_{\Omega_0})$ for all non-empty open subset 
$\Omega_0$ of $\Omega.$ 
Let $\Omega_1,\Omega_2\subseteq \mathbb C^d$ be such that the interior of $\Omega_1\cap \Omega_2$ is non-empty.
Let  $\mathcal H(\kappa_1)$ and $\mathcal H(\kappa_2)$ be two reproducing kernel Hilbert spaces of vector-valued functions on $\Omega_1$ and $\Omega_2$ respectively, which are holomorphic on the respective interiors. Thus, in order to characterize the unitary equivalence of $\mathscr M_z$ on $\mathcal H(\kappa_1)$ and $\mathcal H(\kappa_2)$, there is no loss of generality if we assume $\Omega_1 = \Omega_2$. In view of this, we have the following theorem.
 
 \begin{theorem}\label{unitary-criteria}
Let $H$ be a complex separable Hilbert space and $T = (T_1, \ldots, T_d)$, $\tilde{T} = (\tilde{T_1}, \ldots, \tilde{T_d})$ be two commuting operator-valued multishifts on $\ell^2_{H}(\mathbb N^d)$  with respective invertible operator weights $\{A^{(j)}_{\alpha}: \alpha \in \mathbb N^d, \  j=1, \ldots, d\}$ and $\{\tilde{A}^{(j)}_{\alpha}: \alpha \in \mathbb N^d, \  j=1, \ldots, d\}$. Let $\mathscr B = \{B_\alpha \in \mathcal G(H) : \alpha \in \mathbb N^d\}$ and $\tilde {\mathscr B} = \{\tilde B_\alpha \in \mathcal G(H) : \alpha \in \mathbb N^d\}$, where $B_\alpha$ and $\tilde B_\alpha$ are as defined in Proposition \ref{moments} corresponding to $A^{(j)}_{\alpha}$ and $\tilde{A}^{(j)}_{\alpha}$ respectively. Let $\Omega$ be the set of bounded point evaluations on $\mathcal H^2(\mathscr B)$ and $\mathcal H^2(\tilde{\mathscr B})$ with non-empty interior. Then $T$ and $\tilde{T}$ are unitarily equivalent if and only if there exists a unitary operator $U$ on $H$ such that 
\beq \label{unitary-eq}
U B_\alpha^* B_\alpha = \tilde B_\alpha^* \tilde B_\alpha U \mbox{ for all } \alpha \in \mathbb N^d.
\eeq
\end{theorem}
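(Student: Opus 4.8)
The plan is to transfer both directions to the power-series models of Theorem~\ref{H2-beta-model}. Writing $\mathscr M_z$ for the $d$-tuple of coordinate multiplications, Theorem~\ref{H2-beta-model} provides unitaries identifying $T$ with $\mathscr M_z$ on $\mathcal H^2(\mathscr B)$ and $\tilde T$ with $\mathscr M_z$ on $\mathcal H^2(\tilde{\mathscr B})$; hence $T$ and $\tilde T$ are unitarily equivalent if and only if there is a unitary $W\colon\mathcal H^2(\mathscr B)\rar\mathcal H^2(\tilde{\mathscr B})$ with $W\mathscr M_{z_j}=\mathscr M_{z_j}W$ for all $j$. The crux of the argument is the observation that \emph{every} such intertwining unitary is ``diagonal'': it has the form $W\bigl(\sum_{\alpha}x_\alpha z^\alpha\bigr)=\sum_\alpha (W_0x_\alpha)z^\alpha$ for a single unitary $W_0$ on $H$. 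Granting this, the relation \eqref{unitary-eq} drops out by comparing norms monomial by monomial, using $\|xz^\alpha\|_{\mathcal H^2(\mathscr B)}=\|B_\alpha x\|_H$.

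For the ``if'' direction, suppose $U$ is a unitary on $H$ with $UB_\alpha^*B_\alpha=\tilde B_\alpha^*\tilde B_\alpha U$ for all $\alpha$. Rewriting this as $B_\alpha^*B_\alpha=U^*\tilde B_\alpha^*\tilde B_\alpha U$ and evaluating quadratic forms gives $\|B_\alpha x\|=\|\tilde B_\alpha Ux\|$ for all $x\in H$ and $\alpha\in\mathbb N^d$ (and likewise $\|B_\alpha U^{-1}y\|=\|\tilde B_\alpha y\|$). Hence the prescription $\Gamma\bigl(\sum_\alpha x_\alpha z^\alpha\bigr):=\sum_\alpha (Ux_\alpha)z^\alpha$ defines a map $\mathcal H^2(\mathscr B)\rar\mathcal H^2(\tilde{\mathscr B})$ which is isometric (the defining square-summability is preserved verbatim) and onto (apply the same to $U^{-1}$), hence a unitary; and it intertwines each $\mathscr M_{z_j}$, since multiplication by $z_j$ merely shifts the index $\alpha$ and commutes with the coefficientwise action of $U$. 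Composing $\Gamma$ with the unitaries of Theorem~\ref{H2-beta-model} yields a unitary equivalence between $T$ and $\tilde T$.

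For the ``only if'' direction, let $W\colon\mathcal H^2(\mathscr B)\rar\mathcal H^2(\tilde{\mathscr B})$ be unitary with $W\mathscr M_{z_j}=\mathscr M_{z_j}W$; taking adjoints gives $W\mathscr M_{z_j}^*=\mathscr M_{z_j}^*W$ as well, so $W$ carries the joint kernel $\bigcap_j\ker\mathscr M_{z_j}^*$ in $\mathcal H^2(\mathscr B)$ into the corresponding joint kernel in $\mathcal H^2(\tilde{\mathscr B})$, and $W^*$ reverses this, so the restriction is a bijection. Since the weights are invertible, Proposition~\ref{kerT*} transported by Theorem~\ref{H2-beta-model} (using $\eqref{kerT*-eq}$ with $\ker A_\alpha^*=\{0\}$) identifies each of these joint kernels with the subspace of constants, which in both spaces is isometrically $H$ because $B_0=\tilde B_0=I_H$. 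Thus $W$ restricts to a unitary $W_0$ of $H$. Because $xz^\alpha=\mathscr M_z^\alpha x$ for a constant $x$ and $W$ intertwines $\mathscr M_z^\alpha$, one gets $W(xz^\alpha)=\mathscr M_z^\alpha(W_0x)=(W_0x)z^\alpha$, and by density of the polynomials $W$ is the diagonal operator determined by $W_0$. Finally, isometry of $W$ on the monomial $xz^\alpha$ reads $\|\tilde B_\alpha W_0x\|=\|B_\alpha x\|$ for all $x$ and $\alpha$, i.e. $W_0^*\tilde B_\alpha^*\tilde B_\alpha W_0=B_\alpha^*B_\alpha$; since $W_0$ is unitary this rearranges to exactly \eqref{unitary-eq} with $U=W_0$.

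\textbf{Main obstacle.} The only genuinely delicate point is establishing the diagonal form of an arbitrary intertwining unitary, and inside that the identification $\bigcap_j\ker\mathscr M_{z_j}^*=H$ (the constants). This is where invertibility of the operator weights is used decisively: it forces $\ker A^*_\alpha=\{0\}$ in \eqref{kerT*-eq}, so the wandering subspace for $\mathscr M_z$ is exactly the constants, and the wandering subspace property (Theorem~\ref{ana-wand}) guarantees that this datum, together with the intertwining relations, pins down $W$. Everything else---the computations with $\Gamma$ and with $W_0$, and the passage between the multishift picture and the power-series picture---is routine bookkeeping. The hypothesis that $\Omega$ have non-empty interior is not actually needed for this argument; it is natural only because it lets one state the same criterion on the reproducing kernel Hilbert spaces $\mathcal H(\kappa)$, $\mathcal H(\tilde\kappa)$ via Theorem~\ref{T-H(k)}, where \eqref{unitary-eq} becomes the rigidity relation $\kappa(z,w)=U^{-1}\tilde\kappa(z,w)U$ between the normalized kernels.
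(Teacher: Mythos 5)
Your proof is correct, but it takes a genuinely different route from the paper's. The paper passes to the reproducing kernel Hilbert spaces $\mathcal H(\kappa)$ and $\mathcal H(\tilde\kappa)$ via Theorem \ref{T-H(k)} (which is where the hypothesis that $\Omega$ have non-empty interior enters), invokes the Curto--Salinas result \cite[Theorem 3.7]{C-S} to see that any intertwining unitary is a multiplication operator $\mathscr M_\Phi$, then the argument of \cite[Theorem 8]{Chen} to force $\Phi$ to be a constant unitary $U$ satisfying $\tilde\kappa(z,w)=U\kappa(z,w)U^*$, and finally reads off \eqref{unitary-eq} by comparing Taylor coefficients of the two kernels via \eqref{rk}; the converse is handled by exhibiting $\mathscr M_\Phi$ with constant symbol as a unitary multiplier. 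You instead stay entirely in the formal power series model of Theorem \ref{H2-beta-model} and prove directly that every intertwining unitary $W$ is diagonal: since $W$ also intertwines the adjoint tuples, it carries $\bigcap_j\ker\mathscr M_{z_j}^*$ onto its counterpart, and invertibility of the weights (via \eqref{kerT*-eq}) identifies both joint kernels isometrically with the constants (here $B_0=\tilde B_0=I_H$), so $W$ restricts to a unitary $W_0$ of $H$ and $W(xz^\alpha)=(W_0x)z^\alpha$ follows from the intertwining relation on monomials; isometry on monomials then gives \eqref{unitary-eq}, and the converse is the routine verification that the diagonal lift of $U$ is a unitary intertwiner. Your approach is more elementary and self-contained --- it avoids both external citations and, as you correctly observe, the non-empty-interior hypothesis on $\Omega$, which in the paper is needed only to realize the operators on spaces of genuine holomorphic functions. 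What the paper's route buys in exchange is the explicit rigidity statement $\tilde\kappa(z,w)=U\kappa(z,w)U^*$ for the reproducing kernels, which places the theorem within the Cowen--Douglas/generalized Bergman kernel framework. All the individual steps you sketch (the adjoint intertwining, the identification of the joint kernel with the constants, the density argument, and the passage from $\|\tilde B_\alpha W_0x\|=\|B_\alpha x\|$ to \eqref{unitary-eq}) check out.
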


\begin{proof}
It follows from Theorem \ref{T-H(k)} that $T$ is unitarily equivalent to $\mathscr M_z$ on $\mathcal H(\kappa)$ and $\tilde T$ is unitarily equivalent to $\mathscr M_z$ on $\mathcal H(\tilde\kappa)$. Hence, we show that $\mathscr M_z$ on $\mathcal H(\kappa)$  is unitarily equivalent to $\mathscr M_z$ on $\mathcal H(\tilde\kappa)$ if and only if \eqref{unitary-eq} holds. To this end, suppose that $\mathscr M_z$ on $\mathcal H(\kappa)$ is unitarily equivalent to $\mathscr M_z$ on $\mathcal H(\tilde \kappa)$. Then there exists a unitary operator ${\mathscr U} : \mathcal H(\kappa) \rar \mathcal H(\tilde \kappa)$ such that ${\mathscr U} \mathscr M_{z_j} = \mathscr M_{z_j} {\mathscr U}$ for all $j = 1, \ldots, d.$ Therefore by \cite[Theorem 3.7]{C-S}, ${\mathscr U} = \mathscr M_\Phi$ for some $\Phi : \Omega \rar \mathcal B(H)$. Since both ${\mathscr U}$ and ${\mathscr U}^*$ intertwine $\mathscr M_z$, following the arguments of the proof of \cite[Theorem 8]{Chen}, we see that $\Phi$ is constant and satisfies $\Phi(z) \kappa(z,w) =\tilde \kappa(z,w) \Phi(z)$, $z,w\in \Omega$. Let $\Phi(z) = U$ for all $z \in \Omega$. Then it is easy to see that $U$ is a unitary operator on $H$ and
\beqn
\tilde \kappa(z,w) = U \kappa(z,w) U^*, \quad z,w \in \Omega.
\eeqn
Now for $x, y \in H$ and $z,w \in \Omega$, we get
\beqn
\Big\langle \sum_{\alpha \in \mathbb N^d} \big(\tilde B_\alpha^* \tilde B_\alpha\big)^{-1} x z^\alpha \overline{w}^\alpha, y \Big\rangle_{\!_{H}} &\overset{\eqref{rk}} =& \inp{\tilde \kappa(z,w)x}{y}_{\!_{H}} = \inp{U \kappa(z,w) U^* x}{y}_{\!_{H}}\\
&=&\inp{\kappa(\cdot, w)U^*x}{\kappa(\cdot, z)U^*y}_{\!_{\mathcal H(\kappa)}}\\
&\overset{\eqref{rk}}=& \Big\langle \sum_{\alpha \in \mathbb N^d} U \big(B_\alpha^*  B_\alpha\big)^{-1}U^* x\, z^\alpha \overline{w}^\alpha, y \Big\rangle_{\!_{H}}.
\eeqn
Thus \eqref{unitary-eq} holds.

Conversely, suppose that there exists a unitary operator $U$ on $H$ such that \eqref{unitary-eq} holds. Equivalently, as noted above, we have
\beqn
\tilde \kappa(z,w) = U \kappa(z,w) U^*, \quad z,w \in \Omega.
\eeqn
Define $\Phi : \Omega \rar \mathcal B(H)$ by
\beqn
\Phi(z) = U, \quad z \in \Omega.
\eeqn
Then from \cite[Theorem 6.28]{PR}, the map $\mathscr M_\Phi : \mathcal H(\kappa) \rar \mathcal H(\tilde \kappa)$ defined by 
\beqn
(\mathscr M_\Phi f)(z) = \Phi(z) f(z) = U f(z), \quad f \in \mathcal H(\kappa), \ z \in \Omega. 
\eeqn
is bounded. It is easy to see that $\mathscr M_{\Phi} \mathscr M_{z_j} = \mathscr M_{z_j} \mathscr M_{\Phi}$ for all $j = 1, \ldots, d.$ Using \cite[Theorem 3.7]{C-S}, for $z, w \in \Omega$ and $x \in H$, we get  
\beqn 
(\mathscr M_{\Phi} \mathscr M^*_{\Phi} \tilde \kappa(\cdot,w) x) (z)
= \Phi(z) \kappa(z,w) \Phi(w)^* x 
=U \kappa(z,w) U^* x
= \tilde \kappa(z,w) x .
\eeqn
This shows that $\mathscr M^*_{\Phi}$ can be extended isometrically on $\mathcal H(\tilde \kappa).$ Since $\Phi(w)= U$ is unitary for all $w \in \Omega$, range of $\mathscr M^*_{\Phi}$ is
\beqn 
\bigvee_{w \in
\Omega} \{\kappa(\cdot, w)\Phi(w)^* x : x \in H \}= \mathcal H(\kappa). 
\eeqn
Therefore $\mathscr M_{\Phi}$ is unitary.
This completes the proof.
\end{proof}

\medskip \textit{Acknowledgment}.
The authors are grateful to Gadadhar Misra, Sameer Chavan and Soumitra Ghara for several helpful suggestions and constant support. We express our gratitude to the faculty and the administration of Department of Mathematics and Statistics, IIT Kanpur and Department of Mathematics, IISc Bangalore for their warm hospitality during the preparation of this paper. We convey our sincere thanks to Jan Stochel for several useful comments.


\begin{thebibliography}{33}

\bibitem{AHHK}
W. Arveson, D. Hadwin, T. Hoover and E. Kymala,
{Circular operators}, {\it Indiana Univ. Math. J.} {\bf 33} (1984), 583-595.

\bibitem{BM-1} 
B. Bagchi and G. Misra, The homogeneous shifts, {\it J. Funct. Anal.} {\bf 204} (2003), 293-319.

\bibitem{B}
A. Bourhim, On the local spectral properties of weighted shift operators, {\it Studia Math.} {\bf 163} (2004), 41-69.

\bibitem{B-D-P}  P. Budzy\'{n}ski, P. Dymek and M. Ptak,
Analytic structure of weighted shifts on directed
trees, {\em Math. Nachr.} {\bf 290} (2017), 1612-1629.

\bibitem{CPT} S. Chavan, D. Pradhan and S. Trivedi, Multishifts on directed Cartesian product of rooted directed trees, {\it Dissertationes Math. (Rozprawy Mat.)} {\bf 527} (2017), 1-102.

\bibitem{CPT-1} S. Chavan, D. Pradhan and S. Trivedi, Classification of Drury-Arveson-type Hilbert modules associated with certain directed graphs, {\it J. Operator Theory} {\bf 81} (2019), 21-60.

\bibitem{CT}
S. Chavan and S. Trivedi, An analytic model for left-invertible weighted shifts on directed trees, {\it J. London Math. Soc.} {\bf 94} (2016), 253-279.

\bibitem{CY}
S. Chavan, D. Yakubovich, Spherical tuples of Hilbert space operators,
{\it Indiana Univ. Math. J.} {\bf 64} (2015), 577-612.

\bibitem{Chen}
L. Chen,  On intertwining operators via reproducing kernels, {\it Linear Algebra Appl.} {\bf 438} (2013), 3661-3666. 

\bibitem{Cu}
R. Curto, Applications of several complex variables to multiparameter spectral theory. Surveys of some recent results in operator theory, Vol. II, 25-90, {\it Pitman Res. Notes Math. Ser. {\bf 192}, Longman Sci. Tech.}, Harlow, 1988.

\bibitem{C-S} R. Curto and N. Salinas, Generalized Bergman kernels and the Cowen-Douglas theory, {\it Amer. J. Math.} {\bf 106} (1984), 447-488.

\bibitem{G}
G. Geh\'{e}r, Asymptotic behaviour and cyclic properties of weighted shifts on directed trees, {\it J. Math. Anal. Appl.} {\bf 440} (2016), 14-32.

\bibitem{Gel1}
R. Gellar, {\it Shift operators in Banach space}, Dissertation, Columbia University,
New York, N. Y., 1968.

\bibitem{Gel2}
 R. Gellar, Operators commuting with a weighted shift, {\it Proc. Amer. Math. Soc.} {\bf 23} (1969), 538-545.

\bibitem{Gel3}
R. Gellar, Cyclic vectors and parts of the spectrum of a weighted shift, {\it Trans. Amer. Math. Soc.} {\bf 146} (1969), 69-85.

\bibitem{Ge} 
R. Gellar, {Circularly symmetric normal and subnormal operators}, {\it J. Analyse Math.} {\bf 32} (1977) 93-117.

\bibitem{Gr}
S. Grabiner, Weighted shifts and Banach algebras of power series, {\it Amer. J. Math.} {\bf 97} (1975), 16-42.

\bibitem{GKT}
R. Gupta, S. Kumar and S. Trivedi, Von Neumann's inequality for commuting operator-valued multishifts, {\it Proc. American Math. Soc.} {\bf 147} (2019), 2599-2608.

\bibitem{Hz} 
M. Hartz, Von Neumann's inequality for commuting weighted shifts, {\it Indiana Univ. Math. J.} {\bf 66} (2017), 1065-1079.

\bibitem{H}
H. Helson, Invariant subspaces of the weighted shift. Hilbert space operators and operator algebras (Proc. Internat. Conf., Tihany, 1970), pp. 271–277. Colloq. Math. Soc. János Bolyai, 5, North-Holland, Amsterdam, 1972.

\bibitem{Her}
D. Herrero, Eigenvectors and cyclic vectors of bilateral weighted shifts. II. Simply invariant subspaces, {\it Integral Equations Operator Theory} {\bf 6} (1983), 515-524.

\bibitem{J}
Z. Jab{\l}o\'nski, Hyperexpansive operator valued unilateral weighted shifts, {\it Glasg. Math. J.} {\bf 46} (2004), 405-416.

\bibitem{JJS}
Z. Jab{\l}o\'nski, I. Jung and J. Stochel, Weighted shifts on directed
trees, {\it  Mem. Amer. Math. Soc.} {\bf 216} (2012), no. 1017, viii+106.

\bibitem{JL}
N. P. Jewell and A. R. Lubin, Commuting weighted shifts and analytic function theory in several variables, {\it J. Operator Theory}, {\bf 1} (1979), 207-223.

\bibitem{L} A. Lambert, Unitary equivalence and reducibility of invertibly weighted shifts, {\it Bull. Austral. Math. Soc.} {\bf 5} (1971), 157-173.

\bibitem{LT}
A. Lambert, T. Turner, The double commutant of invertibly weighted shifts, {\it Duke Math. J.} {\bf 39} (1972), 385-389.

\bibitem{MS}  W. Majdak and J. Stochel, Weighted shifts on directed semi-trees: an application to creation operators on Segal-Bargmann spaces, {\it Complex Anal. Oper. Theory}, {\bf 10} (2016), 1427-1452.

\bibitem{MMN}
T. Miller, V. Miller and M. Neumann, Local spectral properties of weighted shifts, {\it J. Operator Theory} {\bf 51} (2004), 71-88.

\bibitem{Ml} 
W. Mlak, {Note on circular operators. I, II, III, IV}, {\it Univ. Iagel. Acta Math.} {\bf 29} (1992) 145-152, 153-162, 163-170, 171-175.

\bibitem{N}
N. Nikol$\acute{}$ski$\breve{\mbox{i}}$, {\it Treatise on the shift operator. Spectral function theory}. With an appendix by S. V. Hruščev [S. V. Khrushchëv] and V. V. Peller. Translated from the Russian by Jaak Peetre. Grundlehren der Mathematischen Wissenschaften [Fundamental Principles of Mathematical Sciences], 273. Springer-Verlag, Berlin, 1986. xii+491 pp.

\bibitem{PR}  V. Paulsen and M. Raghupathi, {\it An Introduction to the Theory of Reproducing Kernel Hilbert Spaces}, Cambridge Studies in Advanced Mathematics, 152. Cambridge University Press, Cambridge, 2016.

\bibitem{S} A. Shields, {\it Weighted shift operators and analytic function
theory, in Topics in Operator Theory}, Math. Surveys
Monographs, vol. 13, Amer. math. Soc., Providence, RI 1974, 49-128.
\end{thebibliography}
\end{document}